\newtheorem{thm}{Theorem}[section]
\newtheorem{prop}[thm]{Proposition}
\newtheorem{cor}[thm]{Corollary}
\newtheorem{lem}[thm]{Lemma}
\newtheorem{cond}[thm]{Condition}
\newtheorem{fact}[thm]{Fact}
\theoremstyle{definition}
\newtheorem{dfn}[thm]{Definition}
\newtheorem{rmk}[thm]{Remark}
\numberwithin{equation}{section}
\newcommand{\cB}{\mathcal{B}}
\newcommand{\cI}{\mathcal{I}}
\newcommand{\cM}{\mathcal{M}}
\newcommand{\cN}{\mathcal{N}}
\newcommand{\cS}{\mathcal{S}}
\newcommand{\cT}{\mathcal{T}}
\newcommand{\id}{\textrm{id}}
\newcommand{\BMO}{{\rm bmo}}
\newcommand{\sfG}{\mathsf{G}}
\newcommand{\boldA}{\textbf{A}}
\newcommand{\boldB}{\textbf{B}}
\newcommand{\boldC}{\textbf{C}}
\newcommand{\BigBMO}{ {\rm BMO} }
\begin{document}

\title[Vector valued BMO-estimates and perturbations of commutators]{BMO-estimates for non-commutative vector valued Lipschitz functions}

\date{}

\author{M. Caspers, M. Junge, F. Sukochev, D. Zanin}

\date{\today, {\it MSC2000}: 47B10, 47L20, 47A30.  The work of  FS and DZ is supported by the ARC}

\address{M. Caspers, TU Delft, DIAM, Analysis, Van Mourik Broekmanweg 6, 2628 XE Delft, The Netherlands}
\email{m.p.t.caspers@tudelft.nl}

\address{M. Junge, Department of Mathematics, University of Illinois, Urbana, IL 61801, USA}
\email{junge@math.uiuc.edu}

\address{F. Sukochev, D. Zanin, School of Mathematics and Statistics, UNSW, Kensington 2052, NSW, Australia}
\email{f.sukochev@unsw.edu.au}
\email{d.zanin@unsw.edu.au}

\begin{abstract}
We construct   Markov semi-groups $\cT$ and associated BMO-spaces on a finite von Neumann algebra $(\cM, \tau)$ and obtain results for perturbations of commutators and non-commutative Lipschitz estimates. In particular, we prove that for any $A \in \cM$  self-adjoint and $f: \mathbb{R} \rightarrow \mathbb{R}$  Lipschitz there is a   Markov semi-group $\cT$ such that for $x \in \cM$,
\[
\Vert [f(A), x] \Vert_{\BMO(\cM, \cT)} \leq c_{abs}   \Vert f' \Vert_\infty \Vert [A, x] \Vert_\infty.
\]
We obtain an analogue of this result for more general von Neumann valued-functions $f: \mathbb{R}^n \rightarrow \cN$ by imposing  H\"ormander-Mikhlin type assumptions on $f$.

 In establishing these result we show that Markov dilations of Markov semi-groups have certain automatic continuity properties. We also show that Markov semi-groups of double operator integrals admit (standard and reversed) Markov dilations.
 \end{abstract}

\maketitle

\section{Introduction}

Non-commutative Lipschitz properties of functions have been studied for a long time and go back at least to the work of M.G. Krein \cite{Krein}. One question raised in \cite{Krein} in this direction is whether every Lipschitz function $f: \mathbb{R} \rightarrow \mathbb{C}$ is also a non-commutative Lipschitz function in the sense that the mapping
\begin{equation}\label{Eqn=NCLip}
B(H)_{sa} \rightarrow B(H): A \mapsto f(A),
\end{equation}
is Lipschitz. Here $B(H)_{sa}$ is the self-adjoint part of the bounded operators on a Hilbert space $B(H)$.   In its original statement,  Krein's question has a negative answer as was shown in \cite{Far67}, \cite{Far68}, \cite{Far72}. In fact already for $f$ the aboslute value map the statement fails  \cite{Davies}, \cite{Kato}.  Only after imposing additional smoothness/differentiability properties on $f$  the mapping \eqref{Eqn=NCLip} is Lipschitz.   Indeed,  in \cite{BirmanSolomyak}, \cite{BirmanSolomyak2} Birman and Solomyak showed that for $f' \in {\rm Lip}_{\varepsilon}(\mathbb{R})\cap L^p(\mathbb{R})\cap L_\infty(\mathbb{R})$ with $\varepsilon > 0, p\ge 1$  we have that \eqref{Eqn=NCLip} is Lipschitz.  The result was improved on by Peller in \cite{PellerHankel}, \cite{Peller2} who showed that it suffices to take $f$ in the Besov space $B^{1}_{\infty 1}$, see  \cite{Grafakos} for Besov spaces. 

Krein's question can be altered by replacing the uniform operator norms in \eqref{Eqn=NCLip} by non-commutative $L_p$-norms with $1 < p < \infty$ associated with the Schatten-von Neumann classes $\cS_p$. In this case a complete answer to the non-commutative differentiability properties of \eqref{Eqn=NCLip} was found \cite{PotapovSukochevActa}, namely any Lipschitz function is a non-commutative Lipschitz function in the sense that there is a constant $c_p$ such that for any self-adjoint operators $A, B \in \mathcal{S}_p$ we have,
\[
\Vert f(A) - f(B) \Vert_p \leq c_p\|f'\|_{\infty} \Vert A - B \Vert_p.
\]
The constant $c_p$ grows to $\infty$ if either $p \rightarrow 1$ or $p \rightarrow \infty$. In fact the asymptotic  behaviour was found in \cite{CMPS} (see also \cite{CPSZ}) where it was shown that asymptotically $c_p \simeq p^2 (p-1)^{-1}$.

In this paper we start the investigation of perturbation of commutators and non-commutative Lipschitz functions from two new view points: BMO-spaces and vector valued estimates.

\vspace{0.3cm}

We use the theory of BMO-spaces to obtain `end-point estimates' of Krein's problem. The optimal behaviour for the constant $c_p$ hints towards the existence of such an end-point estimate but so far the proof was not obtained. In this context we use the theory of semi-group BMO-spaces, in the commutative case extensively studied by  e.g. \cite{StroockVar}, \cite{Varopoulos}, and much more recently in \cite{DY5}, \cite{DY5b}. For non-commutative BMO-spaces the theory was developped in \cite{JungeMei}, see also \cite{JMPReview}.

 BMO-spaces depend on the choice of a semi-group. This is just as for  other definitions of BMO, which depend on the filtration of a von Neumann algebra or in the classical setting the choice of cubes/shapes over which means are taken. This choice gives a flexibility in finding the appropriate BMO-space for Krein's problem.  In the current paper we introduce a natural BMO-space to resolve such problems in perturbation theory. In particular, we prove the result announced in the abstract. Our main theorem which makes this all work, proved in Section \ref{Sect=ComBMO}, yields as follows.

\begin{thm} \label{Thm=Intro} Let $(\cM, \tau)$ be a finite von Neumann algebra and let $f:\mathbb{R}\to\mathbb{R}$ be Lipschitz with $\Vert f ' \Vert_\infty \leq 1$. For every $A=A^*\in\mathcal{M},$
\begin{enumerate}[{\rm (i)}]
\item  The semi-group of double operator integrals $\mathcal{I}^A = ( \cI_{e^{-tF}}^A )_{t\geq0}$ with symbol
$$F(\lambda,\mu)=|\lambda-\mu|^2+|f(\lambda)-f(\mu)|^2,\quad \lambda,\mu\in\mathbb{R},$$
is Markov (i.e. a  strongly continuous semi-group of trace preserving unital completely positive maps);
\item  The double operator integral  $\cI_{f^{[1]}}^A$ with $f^{[1]}$ the divided difference of $f$ maps $\mathcal{M}$ to $\BMO_{\cI^A}(\cM)$ and its norm is bounded by an absolute constant $c_{abs}$.
\end{enumerate}
\end{thm}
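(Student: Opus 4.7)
The plan is to prove the two parts separately and to let part (i) provide, via a Stinespring representation of $\cI^A_{e^{-tF}}$, the main technical tool for part (ii).

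For (i), the crucial observation is that $F$ is a squared Euclidean distance for the graph embedding $v(\lambda) := (\lambda, f(\lambda))\in\mathbb{R}^2$; that is, $F(\lambda,\mu) = \|v(\lambda)-v(\mu)\|^2$. Applying the Gaussian Fourier identity
\[
e^{-t\|w\|^2} = \int_{\mathbb{R}^2} e^{i\xi\cdot w}\,d\gamma_t(\xi), \qquad d\gamma_t(\xi) = (4\pi t)^{-1}\,e^{-|\xi|^2/(4t)}\,d\xi,
\]
at $w = v(\lambda)-v(\mu)$, together with the multiplicativity of the DOI functional calculus and the tautology $\cI^A_{e^{i\xi\cdot v(\lambda)}\overline{e^{i\xi\cdot v(\mu)}}}(x) = u_\xi x u_\xi^*$ with the unitary $u_\xi := e^{i(\xi_1 A + \xi_2 f(A))}\in\cM$, yields the explicit formula
\[
\cI^A_{e^{-tF}}(x) = \int_{\mathbb{R}^2} u_\xi\, x\, u_\xi^*\,d\gamma_t(\xi).
\]
This Bochner average of inner automorphisms against a probability measure is manifestly unital, completely positive, and trace preserving. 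The semigroup law follows from $e^{-(s+t)F} = e^{-sF}e^{-tF}$ and the DOI calculus, and strong continuity from $e^{-tF}\to 1$ pointwise together with standard continuity properties of the DOI map.

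For (ii), I would work with the column BMO-seminorm
\[
\|y\|_{\BMO^c(\cI^A)}^2 := \sup_{t>0} \bigl\|\cI^A_{e^{-tF}}(y^*y) - (\cI^A_{e^{-tF}}y)^*\cI^A_{e^{-tF}}y\bigr\|_\infty
\]
and its row counterpart, for $y = \cI^A_{f^{[1]}}(x)$. Using the Stinespring formula from (i), the Kadison--Schwarz gap rewrites as the averaged variance
\[
\tfrac{1}{2}\iint (u_\xi y u_\xi^* - u_\eta y u_\eta^*)^*(u_\xi y u_\xi^* - u_\eta y u_\eta^*)\,d\gamma_t(\xi)d\gamma_t(\eta).
\]
Since each $u_\xi$ commutes with $A$, the compatibility $u_\xi\,\cI^A_{f^{[1]}}(x)\,u_\xi^* = \cI^A_{f^{[1]}}(u_\xi x u_\xi^*)$ holds entrywise in the spectral basis of $A$; after the change of variables $\tau = \xi-\eta$ and marginalizing, the estimate thus reduces to bounding
\[
\int_{\mathbb{R}^2}\bigl\|\cI^A_{f^{[1]}}(u_\tau x u_\tau^* - x)\bigr\|_\infty^2\,d\widetilde\gamma_t(\tau)
\]
by an absolute multiple of $\|x\|_\infty^2$, with $\widetilde\gamma_t$ the Gaussian marginal on $\tau$.

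The main obstacle is precisely this final estimate. A naive Duhamel expansion $u_\tau x u_\tau^* - x = i\int_0^1 u_{s\tau}(\tau_1[A,x] + \tau_2[f(A),x])u_{s\tau}^*\,ds$ is useless here, because both $\cI^A_{f^{[1]}}$ and $[f(A),\cdot]$ fail to be bounded on $L_\infty$ (which is exactly Krein's negative result). One must instead work at the symbol level: the composition $\cI^A_{f^{[1]}}(u_\tau x u_\tau^* - x)$ equals $\cI^A$ applied to $x$ with symbol
\[
\bigl(e^{i\tau\cdot(v(\lambda)-v(\mu))} - 1\bigr)f^{[1]}(\lambda,\mu),
\]
and writing $e^{i\theta}-1 = i\theta\,\psi(\theta)$ with $\|\psi\|_\infty \le 1$ and then using the defining identity $f^{[1]}(\lambda,\mu)(\lambda-\mu) = f(\lambda)-f(\mu)$, one may re-express this symbol as a bounded factor multiplied by $(\tau_1 + \tau_2 f^{[1]}(\lambda,\mu))(f(\lambda)-f(\mu))$. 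Converting this symbol-level identity into an $L_\infty$-operator bound of order $|\tau|\cdot\|x\|_\infty$—so that the Gaussian integration in $\tau$ yields an absolute constant times $\|x\|_\infty^2$, independent of $A$, $f$ and $\cM$—is where the real work lies; this should rest on the Hörmander--Mikhlin-type DOI multiplier estimates that the paper develops alongside the framework.
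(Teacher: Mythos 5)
Your argument for part (i) is correct and takes a genuinely different (and arguably more self-contained) route than the paper. You realize $\cI^A_{e^{-tF}}(x)=\int_{\mathbb{R}^2}u_\xi x u_\xi^*\,d\gamma_t(\xi)$ as a Gaussian average of inner automorphisms, with $u_\xi=e^{i(\xi_1 A+\xi_2 f(A))}$; this follows from the Gaussian Fourier representation of $e^{-t\|\cdot\|^2}$, the multiplicativity of the DOI calculus, and the observation that $F$ is the squared Euclidean distance under the graph embedding $\lambda\mapsto(\lambda,f(\lambda))$. The paper instead builds the trace-preserving $*$-monomorphism $\varphi_A(x)=U^A(1\otimes x)U^{A*}$ into $L_\infty(\mathbb{R}^2)\otimes\cM$ from the same graph embedding and proves the intertwining identity $\varphi_A\circ\cI^A_{e^{-tF}}=(e^{t\Delta}\otimes\id_{\cM})\circ\varphi_A$ (Lemma \ref{transference lemma}); Markovianity is then inherited from the Heat semi-group, with a discretization $A_l=l^{-1}\lfloor lA\rfloor$ to pass from finite spectrum to general $A$. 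The two pictures are dual and both give (i); the paper's is chosen because it sets up (ii). (A small point of hygiene: the paper's Definition \ref{Dfn=MarkovSemi} requires continuity in measure, not strong operator continuity, but your $L_2$-continuity of $t\mapsto\cI^A_{e^{-tF}}(x)$ is exactly what is needed on the unit ball.)

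For part (ii) your argument is incomplete and the gap is genuine, not a technicality. The variance reduction and the intertwining $u_\tau\,\cI^A_{f^{[1]}}(x)\,u_\tau^*=\cI^A_{f^{[1]}}(u_\tau x u_\tau^*)$ are both correct, and they bring you to bounding $\int\|\cI^A_{f^{[1]}}(u_\tau x u_\tau^*-x)\|_\infty^2\,d\gamma_{2t}(\tau)$ uniformly in $t$, which you explicitly defer. Two concrete problems. First, the proposed factoring of the symbol as $i\psi(\theta)(\tau_1+\tau_2 f^{[1]}(\lambda,\mu))(f(\lambda)-f(\mu))$ does not deliver an absolute $L_\infty$ bound: the factor $f(\lambda)-f(\mu)$ corresponds to the commutator $[f(A),\cdot]$, whose $L_\infty\to L_\infty$ norm scales with $\|A\|_\infty$ rather than an absolute constant, and the remaining factors $\psi(\theta)$ and $\tau_1+\tau_2 f^{[1]}$ are bounded pointwise but that does not make them Schur-bounded symbols. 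Second, because you applied the triangle inequality inside the operator norm of the variance, you are asking for a scalar bound that strictly dominates the BMO quantity; nothing in the problem suggests this stronger inequality is available without the full Calder\'on--Zygmund machinery. The step that actually makes the argument close in the paper is the second identity of Lemma \ref{transference lemma}, $\varphi_A\circ\cI^A_{f^{[1]}}=(m_0(\nabla)\otimes\id_{\cM})\circ\varphi_A$ with $m_0$ a smooth homogeneous extension of $\xi_2/\xi_1$: since $\varphi_A$ intertwines the semi-groups it is a BMO isometry, and the absolute constant for $m_0(\nabla)$ comes from the non-commutative H\"ormander--Mikhlin theorem of \cite{JMP}, as recorded in Proposition \ref{Prop=CBMulti}; the general-$A$ case then follows by approximating with $A_l$ and passing to the limit in measure as in Lemma \ref{5 convergence lemma}. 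If you want to finish along your lines, you would essentially need to reprove the transference identity and then invoke \cite{JMP} at the symbol level rather than at the operator level.
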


As a corollary of Theorem \ref{Thm=Intro} we retrieve many existing results in perturbation theory, in particular the ones from \cite{Kato}, \cite{Davies},  \cite{Kosaki}, \cite{DDPS1}, \cite{DDPS2},  \cite{PotapovSukochevActa}, \cite{CMPS}, and partly \cite{KPSS}. We also retrieve the optimal estimates in case $p = \infty$ for finite dimensional Schatten classes in Theorem \ref{Thm=LogEstimate}, see \cite{AleksandrovPeller}. Together with the  weak $(1,1)$ estimate of \cite{CPSZ} (see also \cite{NazarovPeller}), which is complementary to our paper, they complete the study of the end-point estimates.   At the same time, we emphasize that our results do not cover the case of infinite von Neumann algebras, due to the fact that BMO-spaces, even in the case  $\cM=B(H)$, are not realized as spaces of operators. On the other hands, a lot of techniques and proofs developed in this paper continue to hold for general semifinite von Neumann algebras almost verbatim (see also Section \ref{Sect=Heat}) and this is a cause for careful optimism that our approach can be extended to the latter case as well. 

To apply Theorem \ref{Thm=Intro} and obtain these corollaries we shall further develop the theory of Markov dilations and we obtain some results of independent interest. In particular we show that Markov semi-groups can be studied through their discrete subsemi-groups and get automatic continuity of a Markov dilation. The following is proved in Theorem \ref{discretization theorem}.

\begin{thm}\label{Thm=IntroII}  Let $(\cM, \tau)$ be a finite von Neumann algebra.  Let $\mathcal{T}=(T_t)_{t\geq0}$ be a Markov semi-group. If $(T_t)_{t\in\epsilon\mathbb{N}_{\geq 0}}$ admits a standard (resp. reversed) Markov dilation for every $\epsilon>0,$ then also $\mathcal{T}$ admits a standard (resp. reversed) Markov dilation. Moreover, the dilation has continuous path.
\end{thm}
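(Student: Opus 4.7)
The natural approach is to glue the discrete dilations together via a tracial ultraproduct. Fix a sequence $\epsilon_n\downarrow 0$ (say $\epsilon_n=2^{-n}$) and a free ultrafilter $\omega$ on $\mathbb{N}$. For each $n$ choose a standard (resp.\ reversed) Markov dilation $(\pi_t^{(n)})_{t\in\epsilon_n\mathbb{N}_{\geq0}}$ of $(T_t)_{t\in\epsilon_n\mathbb{N}_{\geq0}}$ inside some finite tracial algebra $(\cN_n,\hat\tau_n)$, together with its associated filtration. Form the tracial ultraproduct $\cN_\omega:=\prod_{n\to\omega}(\cN_n,\hat\tau_n)$; it is again a finite von Neumann algebra and contains $\cM$ trace-preservingly along the diagonal.

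For each $t\geq 0$ set $t_n:=\epsilon_n\lfloor t/\epsilon_n\rfloor\in\epsilon_n\mathbb{N}_{\geq0}$, so that $t_n\to t$, and define $\pi_t:\cM\to\cN_\omega$ by $\pi_t(x):=[(\pi_{t_n}^{(n)}(x))_n]_\omega$. Normality, multiplicativity and trace-preservation of $\pi_t$ descend from the same properties of each $\pi_{t_n}^{(n)}$. The dilation identity passes to the limit because $E_{\cM}\pi_{t_n}^{(n)}(x)=T_{t_n}(x)\to T_t(x)$ in $L_2(\cM)$ by strong continuity of $\cT$. Taking ultraproducts of the filtrations on the $\cN_n$ yields a filtration on $\cN_\omega$ with respect to which the (standard or reversed) Markov property transfers level-by-level from the $\pi_{t_n}^{(n)}$ to the $\pi_t$.

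Continuity of the path follows from the universal identity
\[
\|\pi_s(x)-\pi_t(x)\|_2^{2}=2\|x\|_{2}^{2}-2\,\mathrm{Re}\,\tau\bigl(x^{*}T_{|t-s|}(x)\bigr),
\]
which holds in any Markov dilation of a symmetric semi-group, together with strong continuity of $\cT$, to give $L_2$-continuity of $t\mapsto\pi_t(x)$. To obtain a genuinely continuous path one fixes a countable $*$-subalgebra $\cM_0\subset\cM$ that is $\sigma$-weakly dense, refines the choice of $t_n\to t$ along a joint diagonal (or passes to a modification via a Kolmogorov-type argument) so that $t\mapsto\pi_t(x)$ is continuous for every $x\in\cM_0$, and extends by density.

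The main obstacle I anticipate is the careful bookkeeping ensuring that the ultraproduct filtrations interact correctly with the $\pi_t$ to yield the full standard (resp.\ reversed) Markov condition at \emph{arbitrary} times, rather than only the one-step version inherited from each $\pi_{t_n}^{(n)}$; this requires checking that $E_{\cN_\omega^{[s,u]}}\pi_t(x)=\pi_s(T_{t-s}(x))$ is stable under the ultraproduct even when $s,u,t$ are not compatible with any single discretization $\epsilon_n\mathbb{N}$. Working inside the separable subalgebra of $\cN_\omega$ generated by $\bigcup_{t\in\mathbb{Q}_{\geq0}}\pi_t(\cM_0)$ should tame the inherent non-separability of the ultraproduct and make the continuous-path modification tractable, while the standard/reversed dichotomy is handled symmetrically by a choice of filtration direction.
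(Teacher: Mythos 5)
Your core idea --- form a tracial ultraproduct of the discrete dilations and use a universal $L_2$-identity to propagate continuity --- is the same as the paper's, and the identity you quote is exactly the one computed in Lemma~\ref{first l2 estimate} (before applying Cauchy--Schwarz). The execution diverges in one place. In Lemma~\ref{binary lemma} the paper exploits the nesting $2^{-l}\mathbb{N}_{\geq 0}\subset 2^{-l-1}\mathbb{N}_{\geq 0}$: the ultraproduct over $l$ then gives a dilation indexed \emph{exactly} by the dyadic rationals $\sfG=\bigcup_l 2^{-l}\mathbb{N}_{\geq 0}$, with no rounding (for fixed dyadic $m$ the entries $\pi_m^l$ are eventually defined at $m$ itself), and only afterwards is $\pi_t$ extended to all real $t$ by $L_2$-continuity, with the Markov identity verified by a separate limiting argument. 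You instead take a single ultraproduct with the floor $t_n=\epsilon_n\lfloor t/\epsilon_n\rfloor$, which forces an approximation step already inside the ultraproduct. That is workable, and the ``obstacle'' you flag is in fact routine: set $\cN_\omega^{[\leq s]}:=\prod_{n\to\omega}\cN_n^{[\leq s_n]}$. Since $s\leq t$ implies $s_n\leq t_n$ for every $n$, this is an increasing filtration, the conditional expectations pass to the ultraproduct entrywise, and since $\pi^{(n)}_{s_n}$ is an $L_2$-isometry while $\|T_{t_n-s_n}(x)-T_{t-s}(x)\|_2\to0$ by strong continuity, the discrepancy between $E_{\cN_\omega^{[\leq s]}}\pi_t(x)=\bigl[\pi^{(n)}_{s_n}(T_{t_n-s_n}(x))\bigr]_\omega$ and $\pi_s(T_{t-s}(x))=\bigl[\pi^{(n)}_{s_n}(T_{t-s}(x))\bigr]_\omega$ vanishes in the ultrapower. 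Your appeal to a Kolmogorov-type modification and to a separable reduction is unnecessary: the $L_2$-estimate, together with $\|\pi_t(x)\|_\infty\leq\|x\|_\infty$ and the coincidence of the measure and $L_2$-topologies on bounded sets of a finite von Neumann algebra, already gives measure-continuity of $t\mapsto\pi_t(x)$ for every $x\in\cM$, which is all that ``continuous path'' asks for.
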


We then apply this to Markov semi-groups of double operator integrals and through Ricard's results \cite{RicardDilation} on dilations of Schur multipliers we prove that they also admit a (standard and reversed) Markov dilation.

 \vspace{0.3cm}

In the final part of the paper, Section \ref{Sect=Vector}, we initiate the study of vector-valued Lipschitz functions (in fact von Neumann   algebra  valued to be precise). As we show in Corollary \ref{Cor=SemiCircle}  Khintchine type inequalities and free probability estimates can be recasted in terms of perturbations of vector valued commutators. Section \ref{Sect=Vector} is strongly based on non-commutative Calder\'on-Zygmund theory as developed in \cite{JMP}; in particular we obtain our results through the non-commutative H\"ormander-Mikhlin theorem of \cite{JMP}.

 \vspace{0.3cm}

\noindent {\bf Structure of the paper.} Section \ref{Sect=Prelim} recalls all preliminaries and settles notation. Section \ref{Sect=Disc} proves our discretization result for reversed Markov dilations, i.e. Theorem \ref{Thm=IntroII}. Then in Section \ref{Sect=DOIDilate} we show that Markov semi-groups of double operator integrals admit a reversed Markov dilation. We collect the corresponding results on standard Markov dilations in Section \ref{Sect=ReversedDilateFix}; these results are not used in this paper but we believe they are of independent interest and state them for convenience of the reader. Section  \ref{Sect=ComBMO} proves Theorem \ref{Thm=IntroII} and we derive all its corollaries for perturbation theory in Section \ref{Sect=Conclusions}.
In Section \ref{Sect=VectTrans} and in Section \ref{Sect=Vector} we retrieve the von Neumann-valued Lipschitz estimates.

 \vspace{0.3cm}

 \noindent {\bf Acknowledgements.} We thank both anonymous referees for their careful reading and suggesting improvements to the manuscript.

\section{Preliminaries}\label{Sect=Prelim}

\subsection{General notations} For a multi-index $\alpha = (\alpha_1, \ldots, \alpha_n)$ we write $\vert \alpha \vert = \sum_{k=1}^n \alpha_k$.  For a finite von Neumann algebra $\cM$ with faithful normal trace $\tau$  we write $L_2(\cM)$ for the non-commutative $L_2$-space with respect to $\tau$. We let $\Omega_\tau = 1_{\cM} \in L_2(\cM)$ be the cyclic vector. We identifiy elements of $\cM$ as vectors in $L_2(\cM)$ if necessary. We write $L_p(\cM)$ for the non-commutative $L_p$-space, $1 \leq p < \infty$, associated with $\cM$ and $\tau$. It is the space of all closed densely defined operators $x$ affiliated with $\cM$ such that $\Vert x \Vert_p = \tau(\vert x \vert^p)^{1/p}$ is finite.  Naturally $\cM \subseteq L_p(\cM)$. We set $L_\infty(\cM) = \cM$. The $L_2$-topology on $\cM$ is then the topology of the norm $\Vert \: \Vert_2$.

\subsection{Non-commutative finite BMO-spaces}\label{Sect=BMO}
We recall the following from \cite{JungeMei}. Fix a finite von Neumann algebra $(\cM, \tau)$. We restrict ourselves here to the finite case in order to avoid several technicalities. We then treat the (non-finite) Euclidian case separately  in Section \ref{Sect=Heat}.

\begin{dfn}\label{Dfn=MarkovSemi} We say that a semi-group $\cT = (T_t)_{t \geq 0}$ of linear maps $\cM \rightarrow \cM$ is a Markov semi-group if:
\begin{enumerate}[{\rm (i)}]
\item\label{markovi} $T_t(1) = 1$ and $T_t$   completely positive for every $t\geq0;$
\item\label{markovii} for every $x,y \in \cM$ and for every $t \geq 0$ we have $\tau(x T_t(y)) = \tau(T_t(x) y);$
\item\label{markoviii} for every $x\in \cM,$ we have $t \mapsto T_t(x)$ is continuous in measure.
\end{enumerate}
\end{dfn}

Fix such a  Markov semi-group $\cT = (T_t)_{t \geq 0}$. By a standard interpolation argument for every $t \geq 0$ the map $T_t$ extends to a completely contractive map,
 \[
 T_t^{p}: L_p(\cM) \rightarrow L_p(\cM): x \mapsto T_t(x), \qquad \forall x \in \cM \subseteq L_p(\cM).
 \]
 We set
 \[
 \cM^\circ = \left\{  x \in  \cM  \mid \lim_{t \rightarrow \infty} T_t(x) = 0 \right\},
 \]
  where the limit is a  $\sigma$-weak limit. For  $1 \leq p < \infty$ we set by a norm limit,
 \[
 L_p^\circ(\cM) = \left\{  x \in L_p(\cM) \mid \lim_{t \rightarrow \infty} T_t^p(x) = 0 \right\}.
 \]
  It is a straightforward verification that $ L_p^\circ(\cM)$ is a Banach space in the induced norm. 
 For $x \in \cM^\circ$ we set the column BMO-norm,
 \begin{equation}
 \Vert x \Vert_{\BMO_{\cT}^c} = \sup_{t \geq 0} \Vert T_t(x^\ast x) - T_t(x)^\ast T_t(x) \Vert^{\frac{1}{2}}_\infty.
 \end{equation}
 Further set,
 \begin{equation}\label{Eqn=BMOother}
 \Vert x \Vert_{\BMO_{\cT}^r} = \Vert x^\ast \Vert_{\BMO_{\cT}^c}, \qquad \Vert x \Vert_{\BMO_{\cT}} = \max( \Vert x \Vert_{\BMO_{\cT}^r}, \Vert x^\ast \Vert_{\BMO_{\cT}^c}).
 \end{equation}
  We define $\BMO_{\cT} = \BMO(\cM, \cT)$ as the completion of the space of $x \in \cM^\circ$ with $\Vert x \Vert_{\BMO_{\cT}} < \infty$; it carries norm $\Vert \: \Vert_{\BMO_{\cT}}$.
  We  have contractive inclusions, see \cite[Lemma 3.6]{CaspersBMO},
 \[
 \cM^\circ \subseteq   \BMO(\cM, \cT) \subseteq L_1(\cM).
 \]
 This allows us to represent elements of $\BMO$ as concrete operators that are affiliated with $\cM$ and which are $L_1$ and in particular $\tau$-measurable; this is again a reason to prefer working in the finite setting.
 In particular $L_1(\cM)$ and $\BMO(\cM, \cT)$ form a compatible couple of Banach spaces.
 Also we impose the operator space structure,
 \[
M_n( \BMO(\cM, \cT)) =   \BMO(M_n \otimes \cM, \id_n \otimes \cT).
 \]

We will also make use of the following alternative BMO-norm. For $x \in \mathcal{M}^\circ$ we set
\[
\Vert x \Vert_{\BigBMO^c_{\mathcal{T}}} = \sup_{t \geq 0}  \Vert  T_t\left(  \vert x - T_t(x) \vert^2 \right)  \Vert_\infty^{\frac{1}{2}}.
\]
Then put $\Vert x \Vert_{\BigBMO^r_{\mathcal{T}}}   = \Vert x^\ast \Vert_{\BigBMO^c_{\mathcal{T}}}$ and $	\Vert x \Vert_{\BigBMO_{\mathcal{T}}} = \max( \Vert x \Vert_{\BigBMO^c_{\mathcal{T}}}, \Vert x \Vert_{\BigBMO^r_{\mathcal{T}}}  )$.  The completion of $\mathcal{M}$ for $\Vert \:\cdot \: \Vert_{\BigBMO_{\mathcal{T}}}$ is then defined as $\BigBMO_{\mathcal{T}} := \BigBMO(\mathcal{M}, \mathcal{T} )$.   We observe that $L_1(\cM)$ and $\BigBMO(\cM, \cT)$ form a compatible couple of Banach spaces.

For later use, we record the following lemma here.

 \begin{lem}\label{approximate markov fact} Let $\cM$ be a finite von Neumann algebra. Let $\mathcal{T}=(T_t)_{t\geq0}$ and $\mathcal{T}^l=(T^l_t)_{t\geq0}, l \in \mathbb{N}$ be semigroups on $\cM.$ Suppose $\mathcal{T}$ is continuous  in measure in the sense of Definition \ref{Dfn=MarkovSemi} (\ref{markoviii}). If $\mathcal{T}_l$ is Markov for each $l$ and if $T^l_t(x)\to T_t(x)$ in measure for $x\in\cM$ as $l\to\infty,$ then $\mathcal{T}$ is Markov.
 \end{lem}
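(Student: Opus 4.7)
The plan is to verify the three conditions of Definition \ref{Dfn=MarkovSemi} for $\mathcal{T}$ by pushing each property from the $T_t^l$ to $T_t$ through the given convergence in measure. Condition (\ref{markoviii}) is granted by hypothesis, so only unitality, complete positivity, and $\tau$-symmetry remain. The guiding principle throughout is that, since $\cM$ is finite, an $L_\infty$-bounded sequence converging in measure automatically converges in $L_2(\cM)$ by noncommutative dominated convergence.

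Unitality will be immediate: $T_t^l(1)=1$ for every $l$, so uniqueness of limits in measure forces $T_t(1)=1$. For complete positivity, I would fix $n\geq 1$ and positive $X=[x_{ij}]\in M_n(\cM)$. Entrywise convergence in measure then gives $(\id_n\otimes T_t^l)(X)\to (\id_n\otimes T_t)(X)$ in measure on the finite von Neumann algebra $M_n(\cM)$. Each $T_t^l$ is unital completely positive, hence a complete contraction on $\cM$, so the approximants are uniformly bounded by $\|X\|_\infty$. On a finite von Neumann algebra a bounded sequence converging in measure converges strongly, and the positive cone is strongly closed; so the limit $(\id_n\otimes T_t)(X)$ is positive, giving complete positivity of $T_t$.

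For $\tau$-symmetry, the uniform bound $\|T_t^l(y)\|_\infty\leq \|y\|_\infty$ combined with convergence in measure yields $T_t^l(y)\to T_t(y)$ in $L_2(\cM)$, and similarly $T_t^l(x)\to T_t(x)$ in $L_2(\cM)$. Passing to the limit in $\tau(x\, T_t^l(y))=\tau(T_t^l(x)\, y)$ for $x,y\in\cM$ then delivers condition (\ref{markovii}). There is no real obstacle to the argument; the only mildly delicate step is the stability of positivity under measure-topology limits, which rests on the $L_\infty$-contractivity of unital completely positive maps together with the finiteness of $\tau$—the two ingredients that together upgrade measure convergence of a bounded sequence to strong convergence and so land in the closed positive cone.
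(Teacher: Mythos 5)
Your argument is correct and matches the method the paper itself applies in the analogous Lemma~\ref{approximate markov} (the version for double operator integrals): pass unitality, complete positivity, and $\tau$-symmetry to the limit, using that on a finite von Neumann algebra a uniformly $L_\infty$-bounded sequence converging in measure also converges in $L_2$, hence strongly, so the positive cone is preserved. The only cosmetic difference is that the paper tensors with a general finite von Neumann algebra $\cN$ rather than with $M_n$, but since complete positivity only requires the matrix levels these are interchangeable.
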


  \subsection{Markov dilations}  Recall the following definition from \cite[Page 717]{JungeMei}.

 \begin{dfn}
We say that a Markov semi-group $\mathcal{T} = (T_t)_{t \geq 0}$ on a finite von Neumann algebra $(\cM, \tau)$ admits a standard Markov dilation if there exist:
\begin{enumerate}[{\rm (i)}]
\item a finite von Neumann algebra $(\mathcal{B}, \tau_{\cB})$;
\item an increasing filtration $\mathcal{B}_s, s \geq 0$ of $\cB$;
\item trace preserving $\ast$-homomorphisms $\pi_s: \cM \rightarrow \mathcal{B}_s$;
\end{enumerate}
satisfying the property:
\[
\mathbb{E}_{\cB_s} \circ \pi_t = \pi_s \circ T_{t-s}, \qquad t \geq s,
\]
where $\mathbb{E}_{\cB_s}: \mathcal{B} \rightarrow \mathcal{B}_s$ are the $\tau_{\cB}$-preserving conditional expectations.
\end{dfn}


\begin{dfn}
We say that the dilation has continuous path if, for every $x\in \cM$ the mapping $\mathbb{R}_{\geq 0} \rightarrow \cB: t\to \pi_t(x)$ is  continuous in measure.
\end{dfn}


 In \cite[Theorem 5.2 (i)]{JungeMei} the following interpolation result was obtained.

\begin{thm}\label{Thm=JungeMeiInterpolation}
Let $(\cM, \tau)$ be a finite von Neumann algebra and let $\cT$ be a Markov semi-group on $\cM$ that admits a  standard Markov dilation. Then the complex interpolation space $[\BigBMO_{\cT}, L_2^\circ(\cM)]_{\frac{2}{p}}$ equals $L_{p}^\circ(\cM)$ with equivalence of norms up to a constant $\simeq p$.
\end{thm}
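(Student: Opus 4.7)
The plan is to reduce the statement to the non-commutative martingale $\BMO$--$L_p$ interpolation theorem (Musat; Junge--Musat) using the standard Markov dilation as a bridge between the semigroup and martingale worlds. Concretely, for each $x\in\cM^\circ$ and each parameter $T>0$ I would form the finite-time martingale $y^T=(y^T_t)_{0\le t\le T}$ in $(\cB,(\cB_t)_t)$ by setting $y^T_t:=\mathbb{E}_{\cB_t}\pi_T(x)$. The intertwining identity $\mathbb{E}_{\cB_s}\circ\pi_t=\pi_s\circ T_{t-s}$ forces $y^T_t=\pi_t(T_{T-t}(x))$, so $y^T$ is a genuine $(\cB_t)_t$-martingale with terminal value $\pi_T(x)$. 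A short expansion using the module property of $\mathbb{E}_{\cB_t}$ together with the intertwining identity yields
\[
\mathbb{E}_{\cB_t}\bigl(\,|y^T_T - y^T_t|^2\,\bigr)=\pi_t\bigl(\,T_{T-t}(x^*x)-T_{T-t}(x)^*T_{T-t}(x)\,\bigr).
\]
Since each $\pi_t$ is an isometric $\ast$-homomorphism, taking operator norms, the supremum over $t$, and the limit $T\to\infty$ identifies the semigroup column $\BMO_\cT$-norm of $x$ with $\sup_{T}\Vert y^T\Vert_{\BMO^c(\cB)}$. Combined with the standard semigroup equivalence $\BMO_\cT\sim\BigBMO_\cT$ and the symmetric row calculation, this realises $(\BigBMO_\cT(\cM),L_2^\circ(\cM))$ as a complemented sub-couple of the martingale pair $(\BMO(\cB),L_2(\cB))$; the complementation is the trace-preserving conditional expectation $\cB\to\pi_T(\cM)$, which is bounded on martingale $\BMO$ because $\pi_T(\cM)\subseteq\cB_T$ and hence respects the filtration.

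On the ambient couple, the non-commutative martingale interpolation theorem of Musat provides $[\BMO(\cB),L_2(\cB)]_{2/p}=L_p(\cB)$ with equivalence constant $\simeq p$. Pulling this back through the complementation immediately yields the claimed identification of $[\BigBMO_\cT,L_2^\circ(\cM)]_{2/p}$ with $L_p^\circ(\cM)$ with the same order of constant.

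The main obstacle is the linear-in-$p$ growth of the constant, which is a genuinely quantitative feature not implied by the abstract Calder\'on reiteration theorem; it rests on the sharp non-commutative John--Nirenberg inequality of Junge--Musat and on a careful Fefferman-type $H^1$--$\BMO$ duality handled separately for the column and row parts. A secondary technical point is preserving the $L_2^\circ$ condition through the transfer: this is where the continuous-path refinement of dilations from Theorem~\ref{Thm=IntroII}, combined with Lemma~\ref{approximate markov fact}, is needed in order to justify passing to the $T\to\infty$ limit simultaneously on the $L_2^\circ$ and $\BMO$ sides and to match the martingale decay at $t\to\infty$ with the semigroup decay built into $\cM^\circ$.
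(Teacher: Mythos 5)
The paper does not prove this statement; it quotes it verbatim from \cite[Theorem 5.2(i)]{JungeMei}, so your reconstruction must be compared against Junge--Mei's argument. Your high-level plan --- pass to the standard Markov dilation, form the martingale $y^T_t=\mathbb{E}_{\cB_t}\pi_T(x)=\pi_t(T_{T-t}(x))$, compute its conditional square function, invoke the noncommutative martingale BMO--$L_p$ interpolation theorem, and pull back through a retraction --- is indeed the route Junge--Mei take, and your identity
\begin{equation*}
\mathbb{E}_{\cB_t}\bigl(|y^T_T-y^T_t|^2\bigr)=\pi_t\bigl(T_{T-t}(x^*x)-T_{T-t}(x)^*T_{T-t}(x)\bigr)
\end{equation*}
is correct.

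The gap is that this identity matches the martingale column BMO norm of $y^T$ with the semigroup $\BMO_\cT$-norm $\sup_s\|T_s(x^*x)-T_s(x)^*T_s(x)\|^{1/2}$ (the ``small bmo''), whereas the theorem concerns $\BigBMO_\cT$, whose column seminorm is $\sup_s\|T_s(|x-T_s(x)|^2)\|^{1/2}$. These are different quantities for a general Markov semigroup: $T_s(|x-T_s(x)|^2)$ and $T_s(x^*x)-T_s(x)^*T_s(x)$ differ by cross terms such as $T_s(x^*T_s(x))$, which do not cancel for an arbitrary unital completely positive map, and there is no automatic comparison between them. You close the gap by invoking ``the standard semigroup equivalence $\BMO_\cT\sim\BigBMO_\cT$,'' but this is not a general fact, and the present paper is explicit about the distinction: Lemma~\ref{Lem=Fix} establishes $\BMO(\cM,\cI^A)=\BigBMO(\cM,\cI^A)$ only for the particular double operator integral semigroups of Section~\ref{Sect=ComBMO}, by transferring to the heat semigroup, for which a pointwise algebraic identity (\cite[Lemma~1.3]{JMP}) makes the two coincide. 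That is a special feature of that semigroup, not a consequence of the Markov-dilation hypothesis. To repair the proposal you would need either to arrange the square-function transfer so that it produces the $\BigBMO$ quantity directly, or to establish an independent comparison between the two BMO norms under the dilation hypothesis; both are substantive steps of the Junge--Mei proof, not side remarks. A secondary concern is the complementation: a conditional expectation onto $\pi_T(\cM)\subseteq\cB_T$ exists for each fixed $T$, but its boundedness on both endpoints must be uniform in $T$, and the $T\to\infty$ limit has to be reconciled simultaneously with the $L_2^\circ$ and BMO sides before the interpolation functor passes through; this, too, is handled carefully in Junge--Mei and cannot be dismissed in one sentence.
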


 \subsection{The Heat semi-group and Euclidean BMO-spaces}\label{Sect=Heat}

In the Euclidean (non-finite) case we describe BMO-spaces separately. For $f \in L_2(\mathbb{R}^n)$ let
 \[
 \widehat{f}(\xi) = (2\pi)^{-\frac{n}{2}}  \int_{\mathbb{R}^n} f(\xi) \exp(i \langle \xi, \eta \rangle) d\xi,
 \]
be its unitary Fourier transform. Define the gradient and Laplace operator
\[
\nabla = \frac{1}{i} \left(\frac{\partial}{\partial x_1}, \ldots, \frac{\partial}{\partial x_n} \right), \qquad
\Delta = - \nabla \cdot \nabla =  \sum_{j=1}^n \frac{\partial^2}{\partial^2 x_j}.
\]
So $\Delta \leq 0$.
For $t \geq 0$ let   $e^{t \Delta}: L_\infty(\mathbb{R}^n) \rightarrow  L_\infty(\mathbb{R}^n)$ be the normal unital completely positive map, which is also described by
\[
\widehat{e^{t \Delta}  f } =  H_t^n \widehat{f}, \qquad  f \in L_\infty(\mathbb{R}^n) \cap L_2(\mathbb{R}^n),
\]
with positive definite function $H_t^n(\xi) =  \exp(- t \Vert \xi \Vert_2^2 ), \xi \in \mathbb{R}^n$, i.e. the Heat kernel. Then $\cS  = (e^{t \Delta})_{t \geq 0}$ is a semi-group of completely positive maps that preserve the Haar integral on $L_\infty(\mathbb{R}^n).$ Moreover, for $f \in  L_\infty(\mathbb{R}^n) \cap L_2(\mathbb{R}^n)$ we have that $e^{t\Delta}(f) \in L_2(\mathbb{R}^n)$ and $t \mapsto e^{t\Delta}(f)$ is continuous for the norm of $L_2(\mathbb{R}^n).$

We may define BMO-spaces with respect to the Heat semi-group as operator spaces as follows. Let $\cM$ be a von Neumann algebra (not necessarily finite) and let $L_\infty(\mathbb{R}^n, \cM) \simeq \cM \otimes L_\infty(\mathbb{R}^n)$ be the space of al $\sigma$-weakly measurable essentially bounded functions $f: \mathbb{R}^n \rightarrow \cM$. We may tensor amplify to get a new Markov semi-group $\cS^{\otimes \cM}  := (\id_{\cM} \otimes e^{t \Delta} )_{t \geq 0}$.
Consider the subspace $L_\infty^\circ(\mathbb{R}^n, \cM)$ of all functions $f \in L_\infty(\mathbb{R}^n, \cM)$ such that $(\id_{\cM} \otimes e^{t \Delta} )(f) \rightarrow 0$ in the $\sigma$-weak topolgy as $t \rightarrow \infty$. On  $L_\infty^\circ(\mathbb{R}^n, \cM)$ we may define a column BMO-norm by,
\[
\Vert f \Vert_{\BMO_{\cS^{\otimes \cM}}^c} = \sup_{t \geq 0} \Vert (\id_{\cM} \otimes e^{t \Delta} )(f^\ast f) -   (\id_{\cM} \otimes e^{t \Delta} )(f)^\ast  (\id_{\cM} \otimes e^{t \Delta} )(f) \Vert^{\frac{1}{2} }.
\]
Then set the row BMO- and the BMO-norm by,
\[\Vert f \Vert_{\BMO_{\cS^{\otimes \cM}}^r} =   \Vert f^\ast \Vert_{\BMO_{\cS^{\otimes \cM}}^c}, \qquad \Vert f \Vert_{\BMO_{\cS^{\otimes \cM}}} = \max(    \Vert f \Vert_{\BMO_{\cS^{\otimes \cM}}^c},   \Vert f\Vert_{\BMO_{\cS^{\otimes \cM}}^r}).\]
The completion of the elements in $L_\infty^\circ(\mathbb{R}^n, \cM) \cap L_2(\mathbb{R}^n, \cM)$ with finite $\Vert f \Vert_{\BMO_{\cS^{\otimes \cM}}}$-norm  with respect to $\Vert f \Vert_{\BMO_{\cS^{\otimes \cM}}}$ is then denoted by $\BMO(\mathbb{R}^n, \cS^{\otimes \cM})$ or simply $\BMO_{\cS^{\otimes \cM}}$.  $\BMO(\mathbb{R}^n, \cS^{\otimes \cM})$  has the operator space structure given by the natural identification,
\[
M_n(\BMO(\mathbb{R}^n, \cS^{\otimes \cM})) =  \BMO(\mathbb{R}^n, \cS^{\otimes M_n(\cM)}).
\]

\subsection{Completely bounded Fourier multipliers}

\begin{dfn}
A symbol $m: \mathbb{R}^n \backslash \{ 0 \} \rightarrow \mathbb{C}$  is called  homogeneous if  for all $\xi \in \mathbb{R}^n \backslash \{ 0 \}$ and $\lambda \in \mathbb{R}_{\geq 0}$ we have $m(\lambda \xi) =  m(\xi)$. For such a symbol we extend it by $m(0) = 0$.
\end{dfn}

  By spectral calculus $m(\nabla)$ is the Fourier multiplier with symbol $m$; more precisely $\widehat{m(\nabla)(f)} = \widehat{m f}$ where we recall that $f \mapsto \widehat{f}$ is the unitary Fourier transform.  The following proposition with just bounds instead of complete bounds is a consequence of the H\"ormander-Mikhlin multiplier theorem. For the complete bounds we base ourselves on  \cite{JMP}. Recall that  $\cS = (e^{t \Delta})_{t \geq 0}$ is the Heat semi-group on $\mathbb{R}^n.$

  We call a function $f \in L_\infty(\mathbb{R}^n,\mathcal{N})$ trigonometric if it is in the linear span of functions $e_{\eta,x}(\xi) = e^{i \langle \eta, \xi \rangle} x , \xi, \eta \in \mathbb{R}^n, x \in \cN.$ Let $\mathcal{A}$ be the $\ast$-algebra of trigonometric functions in $L_\infty(\mathbb{R}^n,\mathcal{N})$.

\begin{prop}\label{Prop=CBMulti}
Let $\cN$ be a semi-finite von Neumann algebra. Let $m:\mathbb{R}^n\backslash \{ 0 \} \rightarrow \mathbb{C}$ be a smooth homogeneous symbol and set $m(0)=0.$ For every trigonometric function $f\in L_\infty(\mathbb{R}^n,\mathcal{N}),$ we have
$$\|(m(\nabla) \otimes {\rm id}_{\cN})(f)\|_{\BMO(L_{\infty}(\mathbb{R}^n) \otimes\mathcal{N},\cS\otimes{\rm id}_{\mathcal{N}})}\leq c_m\|f\|_{\infty},$$
where the constant $c_m$ depends only on the function $m$ (that is, it does not depend either on $\mathcal{N}$ or $f$).
\end{prop}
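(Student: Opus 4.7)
The plan is to deduce Proposition \ref{Prop=CBMulti} directly from the non-commutative H\"ormander-Mikhlin multiplier theorem established in \cite{JMP}. The key observation is that smooth homogeneous symbols automatically satisfy the H\"ormander-Mikhlin derivative bounds of every order, after which the semicommutative BMO estimate of \cite{JMP} for the heat semi-group delivers the result with constant independent of $\cN$ and $f$.

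First I would verify the H\"ormander-Mikhlin condition for $m$. Differentiating the homogeneity identity $m(\lambda \xi) = m(\xi)$ in $\xi$ yields $\lambda^{|\alpha|} (\partial^\alpha m)(\lambda \xi) = (\partial^\alpha m)(\xi)$ for every multi-index $\alpha$ and every $\lambda > 0$. Choosing $\lambda = |\xi|^{-1}$ and using compactness of the unit sphere together with the smoothness of $m$ away from the origin, one obtains $|\partial^\alpha m(\xi)| \leq C_\alpha |\xi|^{-|\alpha|}$ with $C_\alpha = \max_{|\xi| = 1} |\partial^\alpha m(\xi)|$, which is exactly the standard H\"ormander-Mikhlin condition.

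Next I would apply the H\"ormander-Mikhlin theorem of \cite{JMP}. For a trigonometric $f = \sum_j e_{\eta_j, x_j}$ the operator $(m(\nabla) \otimes \id_{\cN})(f) = \sum_j m(\eta_j) e_{\eta_j, x_j}$ is well defined, using the convention $m(0) = 0$ to handle the zero-frequency term. The theorem of \cite{JMP} then asserts that for any semifinite $\cN$ the Fourier multiplier $m(\nabla) \otimes \id_{\cN}$ is bounded from $L_\infty(\mathbb{R}^n, \cN)$ into $\BMO(L_\infty(\mathbb{R}^n) \otimes \cN, \cS \otimes \id_{\cN})$ with a constant controlled purely by the H\"ormander-Mikhlin constants $\{C_\alpha\}_{|\alpha| \leq N}$ for some $N$ depending only on the dimension $n$. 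This immediately yields the claimed inequality with $c_m$ independent of $\cN$ and of $f$.

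The main obstacle is not a new estimate but the bookkeeping of matching conventions: one must check that the BMO space of \cite{JMP} coincides (up to equivalence of norms) with the one defined in Section \ref{Sect=Heat} via $\|T_t(|f - T_t(f)|^2)\|_\infty^{1/2}$, and that the column/row structure is preserved under tensor amplification by $M_n$ so that the estimate upgrades to the complete boundedness implicit in the statement (via the identification $M_n(\BMO(\mathbb{R}^n, \cS^{\otimes \cN})) = \BMO(\mathbb{R}^n, \cS^{\otimes M_n(\cN)})$ fixed in Section \ref{Sect=Heat}). Both verifications are essentially formal and the second is automatic from the stated operator space structure, so once the H\"ormander-Mikhlin estimates on $m$ are in hand the proof is a direct citation.
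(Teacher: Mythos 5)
Your argument follows the same path as the paper: derive the H\"ormander--Mikhlin derivative bounds $|\partial^\alpha m(\xi)| \leq C_\alpha \|\xi\|^{-|\alpha|}$ from homogeneity and smoothness, then invoke the noncommutative multiplier theorem of \cite{JMP} (the paper cites \cite[Lemma 2.3]{JMP} together with \cite[Remark 2.4]{JMP} and \cite[p.~75, Theorem~6]{SteinInt} to verify its hypotheses on the convolution kernel $\widehat{m}$). One small point worth making explicit, which the paper does and you leave implicit: \cite[Lemma 2.3]{JMP} only gives the \emph{column} BMO bound, and the \emph{row} bound is obtained by applying the column estimate to the reflected symbol $m^\vee(\xi)=\overline{m(-\xi)}$ and the adjoint $f^\ast$; also note the Section~\ref{Sect=Heat} BMO norm is defined through $T_t(f^\ast f)-T_t(f)^\ast T_t(f)$ rather than $T_t(|f-T_t f|^2)$ (the two agree by Lemma~\ref{Lem=Fix}, but your citation of the convention is slightly off).
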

\begin{proof}
We first note that as  $m(0) = 0$ we find that   $m(\nabla)(f) \in L_\infty^\circ(\mathbb{R}^n, \cN)$ for every $f \in \mathcal{A}$. 	
We check   Conditions (i) and (ii) of \cite[Lemma 2.3]{JMP}.   As $m$ is bounded as a function (by homogeneity) \cite[Remark 2.4]{JMP} immediately gives Condition (i). Next homogenity of $m$ implies that   there exists a constant $c_n$ such that for all multi-indices $\beta$ with $\vert \beta \vert \leq n + 2$,
\[\vert (\partial_\beta  \: m )(\xi) \vert \leq c_n \Vert \xi \Vert_2^{-\vert \beta \vert}, \qquad \xi \in \mathbb{R}^n \backslash \{ 0 \}.\]
This implies    Condition (ii) for the Fourier transform $k = \widehat{m}$ of  \cite[Lemma 2.3]{JMP} by \cite[p. 75, Theorem 6]{SteinInt}. Then \cite[Lemma 2.3]{JMP} shows that  there is a constant $c_m$, only depending on $m$, such that for $f \in \mathcal{A}$   we have
\begin{equation}\label{Eqn=ColumnBd}
\Vert (m(\nabla) \otimes \id_{\cN})(f) \Vert_{\BMO^c( L_\infty( \mathbb{R}^n) \otimes \cN, \cS \otimes \id_{\cN})} \leq  c_m \Vert f \Vert_\infty.
\end{equation}
This yields the column estimate. Further, as we have, for $f \in \mathcal{A}$,
\[
\begin{split}
& \Vert (m(\nabla) \otimes \id_{\cN})(f) \Vert_{\BMO^r(L_\infty(\mathbb{R}^n) \otimes \cN, \cS \otimes \id_{\cN})} \\
& \qquad  = \Vert ( (m^\vee)(\nabla)\otimes \id_{\cN})(\overline{f}) \Vert_{\BMO^c(L_\infty(\mathbb{R}^n) \otimes \cN, \cS \otimes \id_{\cN})},
\end{split}
\]
with $m^\vee(\xi) = \overline{ m(-\xi) }$ we also get the row estimate; in combination with the column estimate \eqref{Eqn=ColumnBd} we see that there is a constant $c_m$ such that for every $f \in \mathcal{A}$   we have
\[
\Vert (m(\nabla) \otimes \id_{\cN}) (f) \Vert_{\BMO(L_\infty(\mathbb{R}^n) \otimes \cN, \cS \otimes \id_{\cN})}  \leq  c_m \Vert f \Vert_\infty.
\]
\end{proof}

\subsection{Double operator integrals}\label{Sect=DOI} We recall the following from \cite{PSW}. Let $\cM$ be a von Neumann algebra (not necessarily finite). Let $A_l \in \cM, 1 \leq l \leq n$ be commuting self-adjoint operators. Briefly set $\boldA = (A_1, \ldots, A_n)$. Let $E: \mathcal{B}(\mathbb{R}^n) \rightarrow \cM$ be the joint spectral measure of $\boldA$ on the Borel sets $\mathcal{B}(\mathbb{R}^n)$. So that we have spectral decompositions $A_l = \int_{\mathbb{R}^n} \xi_l dE(\xi)$ with $\xi_l$ the $l$-th coordinate function.   We define a spectral measure $F: \cB(\mathbb{R}^{2n}) \rightarrow B(L_2(\cM))$ by $F(X \times Y)(x) = E(X) x E(Y)$ where $X,Y \subseteq \mathbb{R}^n$ are Borel sets and $x \in L_2(\cM)$. So $F$ takes values in the projections on $L_2(\cM)$. Then for $\phi: \mathbb{R}^{2n} \rightarrow \mathbb{C}$ a bounded Borel function we set the double operator integral,
\[\cI^{\boldA}_\phi  = \int_{\mathbb{R}^{2n}} \phi(\eta_1, \eta_2) dF(\eta_1, \eta_2) \in B(L_2(\cM)),\]
we shall also use the notation,
\[\cI^{\boldA}_\phi(x) = \int_{\mathbb{R}^{2n}} \phi(\eta_1, \eta_2) dE(\eta_1) x E(\eta_2), \qquad x \in L_2(\cM).\]
In case $\boldA$ is just a single operator $A$ we write $\cI_\phi^A$.

In this paper we shall be interested in extensions of $\cI^{A}_\phi$ to BMO- and $L_p$-spaces associated with $\cM$. Here we record the relation that if $\cM$ is finite and  $A = \sum_{\lambda \in \sigma(A)} \lambda p_\lambda$ has discrete spectrum with $p_\lambda = E(\{ \lambda\})$ then,
 \[\cI^{A}_\phi(x) = \sum_{\lambda, \mu \in \sigma(A)} \phi(\lambda, \mu) p_\lambda x p_\mu, \quad x \in \cM.\]

For $B \in \cM$ self-adjoint set  $\lfloor B \rfloor := \sum_{i \in \mathbb{Z}}   i  \chi_{[i, i+1)}( B)$ with $\chi$ the indicator function. We shall repeatedly make use of the following Lemma \ref{Lem=ConvDOI} without further reference.

 \begin{lem}\label{Lem=ConvDOI}
Let $A \in \cM$ be self-adjoint and for $l \in \mathbb{N}_{\geq 1}$ let $A_l = l^{-1} \lfloor l A \rfloor$. Let $\phi: \mathbb{R}^2 \rightarrow \mathbb{C}$ be continuous. For every $x \in L_2(\cM)$ we have $\Vert \cI_\phi^{A_l}(x) - \cI_\phi^{A}(x)  \Vert_2 \rightarrow 0$ as $l\longrightarrow\infty.$
 \end{lem}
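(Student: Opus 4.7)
The plan is to combine three ingredients: (i) a uniform $L_2$-contractivity bound of the form $\|\cI^B_\psi\|_{B(L_2(\cM))} \leq \|\psi\|_{L^\infty(K)}$ that holds for $B \in \{A, A_l\}$ on a fixed compact box $K$, (ii) the elementary convergence of DOIs with polynomial symbols, and (iii) a Weierstrass approximation of $\phi$ by polynomials on $K$.

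First, I would note that $|t - l^{-1}\lfloor l t\rfloor| < 1/l$ for every real $t$, so by spectral calculus $\|A_l - A\|_\infty \leq 1/l$. In particular, the spectra of $A$ and of every $A_l$ are contained in the interval $[-M,M]$ with $M := \|A\|_\infty + 1$; set $K := [-M,M]^2$. Second, the measure $F$ from Section \ref{Sect=DOI} is a genuine projection-valued measure on $L_2(\cM)$, because left- and right-multiplications on $L_2(\cM)$ by the spectral projections $E(X), E(Y)$ of $B$ commute, and the composition of two commuting projections is again a projection. By the standard spectral theorem for PVMs this yields
$$\|\cI^B_\psi(x)\|_2 \leq \|\psi\|_{L^\infty(\sigma(B)\times\sigma(B))}\, \|x\|_2 \leq \|\psi\|_{L^\infty(K)}\, \|x\|_2$$
for $B \in \{A, A_l\}$, for every $x \in L_2(\cM)$ and every bounded Borel $\psi$ on $\mathbb{R}^2$.

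Third, for any polynomial symbol $p(\eta_1,\eta_2) = \sum_{i,j} a_{ij}\eta_1^i \eta_2^j$ the definition of the DOI collapses to $\cI^B_p(x) = \sum_{i,j} a_{ij} B^i x B^j$. Since $A_l \to A$ in operator norm, also $A_l^i \to A^i$ in operator norm for each $i$, and hence
$$\|\cI^{A_l}_p(x) - \cI^A_p(x)\|_2 \xrightarrow[l\to\infty]{} 0$$
for every fixed polynomial $p$ and every $x \in L_2(\cM)$.

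To conclude, fix $\varepsilon > 0$ and, using Weierstrass' theorem, pick a polynomial $p$ with $\|\phi - p\|_{L^\infty(K)} < \varepsilon$. Writing $\phi = p + (\phi - p)$ and applying the triangle inequality together with the contractivity bound of step~(i) to the error $\phi - p$ (once for $B = A$ and once for $B = A_l$) gives
$$\|\cI^{A_l}_\phi(x) - \cI^A_\phi(x)\|_2 \leq 2\varepsilon\|x\|_2 + \|\cI^{A_l}_p(x) - \cI^A_p(x)\|_2.$$
The second term tends to $0$ by the polynomial case, and $\varepsilon$ is arbitrary. The only step that is not a routine computation is the uniform $L_2$-contractivity of $\cI^B_\psi$ — this is what lets the approximation argument proceed uniformly in $l$ — and it is a direct consequence of $F$ being a genuine projection-valued measure.
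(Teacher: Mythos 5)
Your proof is correct, but it takes a genuinely different route from the paper's. The paper's proof is a one-line reduction: writing $\cI^{A_l}_\phi = \cI^A_{\phi_l}$ with $\phi_l(\xi) = \phi(l^{-1}\lfloor l\xi\rfloor)$ (applied coordinatewise) converts the problem into a fixed operator $A$ with a symbol varying uniformly on compacts, and then convergence follows from the $L_\infty$-bound for a single PVM (the paper cites \cite[Lemma 5.1]{CPZ} for this). You instead keep the operator varying and handle it by the three-step approximation scheme you describe: the uniform PVM bound on a common spectral box $K$, explicit convergence $\cI^{A_l}_p(x) \to \cI^A_p(x)$ for polynomial symbols via $\|A_l - A\|_\infty \leq 1/l$, and Stone--Weierstrass on $K$. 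Both arguments hinge on the same core fact --- that $F$ is a genuine projection-valued measure and hence $\|\cI^B_\psi\|_{B(L_2)} \leq \|\psi\|_{L^\infty(\sigma(B)^2)}$ --- but the paper applies it with fixed $B=A$ and a varying symbol, while you apply it uniformly in $l$ and also need a density argument to pass from polynomials to general continuous $\phi$. The paper's route is shorter and defers the PVM bound to a cited lemma; yours is self-contained and more elementary, at the cost of invoking Weierstrass and of a small hygiene issue (a polynomial is not a bounded Borel function on $\mathbb{R}^2$, so strictly speaking one should replace $p$ by $p\,\chi_K$ before feeding it to the DOI; since $F$ is supported on $K$ for both $A$ and $A_l$, this changes nothing and the identity $\cI^B_p(x) = \sum a_{ij}B^i x B^j$ survives).
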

\begin{proof}
We have $\cI_\phi^{A_l} = \cI_{\phi_l}^{A}$ with $\phi_l(\xi) = \phi( l^{-1} \lfloor l \xi \rfloor )$. Then  $\Vert \cI_{\phi_l}^{A}(x) - \cI_{\phi}^{A}(x) \Vert_2 \rightarrow 0$, c.f.  \cite[Lemma 5.1]{CPZ}.
\end{proof}

 \subsection{Vector valued double operator integrals}\label{Sect=DOIvector}
We define vector valued analogues of double operator integrals. To this end suppose that $\cM$ and $\cN$ are  finite von Neumann algebras.  Let $\phi: \mathbb{R}^{n} \times \mathbb{R}^n \rightarrow \cN$ be an essentially bounded $\sigma$-weakly continuous function. Then in particular we also have the same map $\phi: \mathbb{R}^{n} \times \mathbb{R}^n \rightarrow L_2(\cN)$ and this mapping is norm continuous for $L_2(\cN)$. As before let $\boldA$ be an $n$-tuple of mutually commuting self-adjoint operators. For $x \in L_2(\cM)$ we define the double operator integral $\cI_{\phi}^\boldA(x)$ as the unique element in $L_2(\cN) \otimes L_2(\cM)$ that is characterized by,
 \begin{equation}\label{Eqn=DOIvector}
 \langle \cI_{\phi}^\boldA(x), \xi \otimes \eta \rangle = \langle \cI_{ \xi^\ast \circ \phi}^\boldA(x),  \eta  \rangle_{L_2(\cM), L_2(\cM)}, \qquad \xi \in  L_2(\cN), \eta \in L_2(\cM).
 \end{equation}
 Here $\xi^\ast(\eta) = \langle \eta, \xi \rangle$ so that $\xi^\ast \circ \phi: \mathbb{R}^n \times \mathbb{R}^n \rightarrow \mathbb{C}$ is a continuous bounded function and the right hand side of \eqref{Eqn=DOIvector} is the usual double operator integral.

In case the spectrum of $\boldA$ is finite our constructions simplify. We may view $\cI_{\phi}^{\boldA}(x), x \in L_2(\cM) \cap \cM$ as an element of $\cN \otimes \cM$ given by,
\[\cI_{\phi}^{\boldA}(x) = \sum_{i,j \in\sigma(\boldA)}\phi(i,j)\otimes p_ixp_j,\]
where $p_i=\prod_{k=1}^n \chi_{\{i_k\}}(A_k)$ is a spectral projection of $\boldA.$

\subsection{Exterior algebra}
Let $H_{\mathbb{R}}$ be a real Hilbert space and let  $H = H_{\mathbb{R}} \otimes \mathbb{C}$ be its complexified Hilbert space. Let $F^\circ = \mathbb{C} \Omega \oplus \bigoplus_{n=1}^{\infty} H^{\otimes n}$ with unit vector $\Omega$ (the vacuum vector). The pre-inner product on $F^\circ$ is set by,
\[
 \langle \xi_1 \otimes \ldots \otimes \xi_n, \eta_1 \otimes \ldots \otimes \eta_k \rangle = \delta_{n,k} \sum_{\sigma \in S_n}  (-1)^{i(\sigma)} \langle \xi_{\sigma(1)} \otimes \ldots \otimes \xi_{\sigma(n)}, \eta_1 \otimes \ldots \otimes \eta_k \rangle
 \]
where $i(\sigma)$ is the number of inversions on $\sigma$, i.e.. then number of pairs $(a,b)$ with $a < b$ such that $\sigma(b) < \sigma(a)$. Let $F$ be the completion of $F^\circ$ modulo its degenerate part. We denote $\xi_1 \wedge \ldots \wedge \xi_n \in F$ for the equivalence class  of  $\xi_1 \otimes \ldots \otimes \xi_n \in F^\circ$. So with the wedge product $F$ is the usual exterior algebra (or Clifford algebra with the zero quadratic form; note if $\dim(H) < \infty$ also $\dim(F) < \infty$).
For $\xi \in H$ we set
\[
l(\xi) \eta = \xi \wedge \eta, \qquad l^\ast(\xi) (\eta_1 \wedge \ldots \wedge \eta_n)  = \sum_{k=1}^n (-1)^{k} \: \langle \eta_k, \xi \rangle \eta_1 \wedge \ldots \wedge \widehat{\eta_k} \wedge \ldots \wedge \eta_n,
\]
and extend them to bounded operators on $F$. Here $\widehat{\eta_k}$ means that the $k$-th wedge term is excluded from the term. $l^\ast(\xi)$ is the adjoint of $l(\xi)$. We set $s(\xi) = l(\xi) + l^{\ast}(\xi)$. And further $\Gamma := \Gamma(H_{\mathbb{R}}) := \{ s(\xi) \mid \xi \in H_{\mathbb{R}}  \}''.$
  We record the fundamental property of the exterior algebra:
\begin{equation}\label{anticommutation}
s(\xi)s(\eta)+s(\eta)s(\xi)=2\langle \xi,\eta\rangle,\quad \xi,\eta\in H_{\mathbb{R}}.
\end{equation}
The von Neumann algebra $\Gamma$ has faithful normal tracial state $\tau_\Omega(x) = \langle x \Omega, \Omega \rangle$, the vacuum state, c.f. \cite{BozejkoSpeicher} for these results in  greater generality.

\section{Discrete Markov dilations}\label{Sect=Disc}
We show how Markov dilations of discrete semi-groups can be used to get Markov dilations of a continuous one through ultraproduct techniques. In particular we show that we can always guarantee path continuity (in measure topology) of Markov dilations for finite von Neumann algebras.

In the special case, when $\sfG=\mathbb{R}_{\ge 0}$, the definition below coincides with Definition 2.3 above.

\begin{dfn} Let $\sfG$ be a subsemi-group of $\mathbb{R}_{\geq 0}$. We say that a Markov semi-group $(T_t)_{t\in\sfG}$ acting on the probability space $(\cM,\tau)$ admits a standard Markov dilation if there exist:
\begin{enumerate}[{\rm (i)}]
\item a finite von Neumann algebra $(\cB, \tau_{\cB})$;
\item an increasing filtration $(\cB_t)_{t\in\sfG}$ of $\cB$;
\item trace preserving $\ast$-homomorphisms $\pi_t:\cM \rightarrow \cB_t,$ $t\in\sfG;$
\end{enumerate}
such that for $t,s \in\sfG$ with $s \geq t$, we have
\[
\mathbb{E}_{\cB_t} \circ \pi_s=\pi_t\circ T_{s-t}.
\]
Here, $\mathbb{E}_{\cB_t}:  \cB \rightarrow \cB_t$  are the $\tau_{\cB}$-preserving conditional expectations.
\end{dfn}

\begin{thm}\label{discretization theorem}  Let $(\mathcal{M},\tau)$ be a finite von Neumann algebra.  Let $\mathcal{T}=(T_t)_{t\geq0}$ be a Markov semi-group. If the Markov semi-group $(T_t)_{t\in\epsilon\mathbb{N}_{\geq 0}}$ admits a standard Markov dilation for every $\epsilon>0,$ then so does $\mathcal{T}.$ Moreover, the dilation has continuous path.
\end{thm}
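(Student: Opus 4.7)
The plan is to assemble a continuous-time dilation from the given discrete ones via a tracial ultraproduct. Set $\epsilon_n := 2^{-n}$ and, by hypothesis, choose for each $n$ a standard Markov dilation $(\cB^{(n)}, (\cB^{(n)}_t)_{t \in \epsilon_n \mathbb{N}_{\geq 0}}, \pi^{(n)})$ of $(T_t)_{t \in \epsilon_n \mathbb{N}_{\geq 0}}$. Fix a free ultrafilter $\omega$ on $\mathbb{N}$ and form the tracial ultraproduct $\cB := \prod_{n \to \omega} \cB^{(n)}$, a finite von Neumann algebra with trace $\tau_{\cB}((x_n)_\omega) = \lim_{n \to \omega}\tau_{\cB^{(n)}}(x_n)$. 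For each $t \geq 0$ put $t_n := \epsilon_n \lfloor t/\epsilon_n \rfloor \in \epsilon_n \mathbb{N}_{\geq 0}$, so that $t_n \nearrow t$, and define
\[
\pi_t(x) := (\pi^{(n)}_{t_n}(x))_\omega, \qquad \cB_t := \prod_{n \to \omega} \cB^{(n)}_{t_n} \subseteq \cB.
\]
Then $\pi_t$ is an ultraproduct of trace-preserving $\ast$-homomorphisms, hence itself a trace-preserving $\ast$-homomorphism $\cM \to \cB_t$. Since $s \leq t$ forces $s_n \leq t_n$ for every $n$, the collection $(\cB_t)_{t \geq 0}$ is an increasing filtration of $\cB$, and the $\tau_{\cB}$-preserving conditional expectation $\mathbb{E}_{\cB_t}$ is computed componentwise, as is standard for tracial ultraproducts of trace-preservingly embedded subalgebras.

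To verify the dilation identity $\mathbb{E}_{\cB_s} \circ \pi_t = \pi_s \circ T_{t-s}$ for $t \geq s$, the componentwise formula for the conditional expectation together with the discrete dilation property (applicable because $s_n \leq t_n$ both lie in $\epsilon_n \mathbb{N}_{\geq 0}$) gives
\[
\mathbb{E}_{\cB_s}(\pi_t(x)) = (\mathbb{E}_{\cB^{(n)}_{s_n}} \pi^{(n)}_{t_n}(x))_\omega = (\pi^{(n)}_{s_n} T_{t_n - s_n}(x))_\omega.
\]
Continuity of $\cT$ in measure combined with $L_\infty$-boundedness implies $L_2$-continuity of $u \mapsto T_u(x)$; trace-preservation of $\pi^{(n)}_{s_n}$ then makes the difference $\pi^{(n)}_{s_n}(T_{t_n - s_n}(x) - T_{t - s}(x))$ vanish in $L_2(\cB^{(n)})$ as $n \to \infty$, so the right-hand side collapses to $(\pi^{(n)}_{s_n} T_{t-s}(x))_\omega = \pi_s(T_{t-s}(x))$ in the ultraproduct.

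Path continuity is then a direct consequence of the dilation identity itself: for $t \geq t' \geq 0$ and $x \in \cM$,
\[
\|\pi_t(x) - \pi_{t'}(x)\|_2^2 = 2\|x\|_2^2 - 2\,\mathrm{Re}\,\tau\bigl(x^\ast T_{t-t'}(x)\bigr) \xrightarrow[|t - t'| \to 0]{} 0,
\]
using once more the $L_2$-continuity of $\cT$ at $0$ (and polarizing if $x$ is not self-adjoint). Since in a finite von Neumann algebra $L_2$-convergence implies convergence in measure, the path $t \mapsto \pi_t(x)$ is continuous in measure, as required.

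The principal technical burden I anticipate is keeping the ultraproduct bookkeeping clean: specifically, that the tracial ultraproduct $\prod_{n,\omega} \cB^{(n)}_{t_n}$ embeds canonically as a von Neumann subalgebra of $\prod_{n,\omega} \cB^{(n)}$ with componentwise trace-preserving conditional expectation, and that the resulting $\pi_t$ is (up to null perturbations in $L_2(\cB)$) independent of the approximating sequence $t_n \to t$ inside $\epsilon_n \mathbb{N}_{\geq 0}$. The latter ambiguity is harmless by the same $L_2$-continuity argument applied to $\tau_{\cM}(x^\ast T_{|t_n - t'_n|}(x))$. Once these formalities are discharged, every property required of a standard Markov dilation is inherited directly from the discrete hypothesis via the componentwise nature of the ultraproduct.
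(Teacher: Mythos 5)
Your proposal is correct and rests on the same two ingredients as the paper's proof: a tracial ultraproduct of the discrete dilations over $\epsilon_n = 2^{-n}$, and the $L_2$-continuity of $t\mapsto T_t(x)$ obtained from measure-continuity plus uniform $L_\infty$-boundedness. The organization, however, is genuinely more streamlined. The paper proceeds in two stages: Lemma \ref{binary lemma} first builds a dilation on the dyadic rationals $\sfG = \bigcup_l 2^{-l}\mathbb{N}_{\geq 0}$, exploiting the fact that any fixed dyadic $m$ lies \emph{exactly} in $\sfG_l$ for all large $l$ (so no approximation of times is needed inside the ultraproduct there), and only afterwards does Lemma \ref{first l2 estimate} and Lemma \ref{pi continuity lemma} extend $\pi_t$ to all $t\geq 0$ by $L_2$-continuity, with the filtration $\cB_t$ defined as the von Neumann closure of $\bigcup_{u\le t,\,u\in\sfG}\cB_u$. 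You instead approximate every $t$ by $t_n = \epsilon_n\lfloor t/\epsilon_n\rfloor$ \emph{inside} the ultraproduct, set $\cB_t = \prod_{n,\omega}\cB^{(n)}_{t_n}$ and $\pi_t(x)=(\pi^{(n)}_{t_n}(x))_\omega$ directly, and absorb the continuity argument into the verification of the dilation identity via $t_n - s_n \to t-s$ and the $L_2$-isometry of $\pi^{(n)}_{s_n}$. This collapses the paper's three lemmas into one construction. The only point you should flag explicitly --- and you do --- is that the componentwise formula $\mathbb{E}_{\cB_t}((x_n)_\omega)=(\mathbb{E}_{\cB^{(n)}_{t_n}}(x_n))_\omega$ for the conditional expectation onto an ultraproduct of subalgebras is being invoked, which is standard for tracial ultraproducts. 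Your closed-form identity for $\|\pi_t(x)-\pi_{t'}(x)\|_2^2$ is exactly the computation underlying the paper's Lemma \ref{first l2 estimate} (before Cauchy--Schwarz), so the path-continuity conclusion is the same.
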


We prove this theorem in the next couple of lemmas. We shall repeatedly make use  of the fact that the measure topology and the $L_2$-topology coincide on the unit ball of a finite von Neumann algebra.

\begin{lem}\label{binary lemma} Set the semi-group $\sfG = \cup_{l \in \mathbb{N}_{\geq 0}} 2^{-l} \mathbb{N}_{\geq 0}$. Under the assumptions of Theorem \ref{discretization theorem}, $(T_t)_{t\in\sfG}$ admits a standard Markov dilation.
\end{lem}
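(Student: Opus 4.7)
The plan is to assemble the grid-by-grid dilations into a single dilation over $\sfG$ by means of a tracial ultraproduct. For each $l\in\mathbb{N}_{\geq 0}$, fix a standard Markov dilation of $(T_t)_{t\in 2^{-l}\mathbb{N}_{\geq 0}}$, consisting of $(\cB^{(l)},\tau_l)$, an increasing filtration $(\cB^{(l)}_t)_{t\in 2^{-l}\mathbb{N}_{\geq 0}}$, and trace-preserving $\ast$-homomorphisms $\pi^{(l)}_t:\cM\to\cB^{(l)}_t$ satisfying $\mathbb{E}_{\cB^{(l)}_t}\circ\pi^{(l)}_s=\pi^{(l)}_t\circ T_{s-t}$ whenever $s\geq t$ lie in $2^{-l}\mathbb{N}_{\geq 0}$. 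Fix a free ultrafilter $\omega$ on $\mathbb{N}$ and let $(\cB,\tau_\cB):=\prod_{l\to\omega}(\cB^{(l)},\tau_l)$ denote the tracial ultraproduct; this is again a finite von Neumann algebra with faithful normal trace $\tau_\cB([(x_l)_l])=\lim_{l\to\omega}\tau_l(x_l)$.

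For $t\in\sfG$, pick $l_0(t)$ so that $t\in 2^{-l}\mathbb{N}_{\geq 0}$ for every $l\geq l_0(t)$, and put
\[
\cB_t:=\prod_{l\to\omega}\cB^{(l)}_t\subseteq\cB,\qquad \pi_t(x):=\bigl[(\pi^{(l)}_t(x))_{l\geq l_0(t)}\bigr],\quad x\in\cM.
\]
These definitions do not depend on the choice of $l_0(t)$ since ultrafilter limits are insensitive to finite initial segments. The inclusion $\cB_s\subseteq\cB_t$ for $s\leq t$ in $\sfG$ is inherited componentwise from the individual filtrations at every sufficiently large level, so $(\cB_t)_{t\in\sfG}$ is an increasing filtration of $\cB$. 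Each $\pi_t$ is a trace-preserving $\ast$-homomorphism because every $\pi^{(l)}_t$ is and the algebra/trace operations on $\cB$ are performed componentwise modulo the trace-null ideal.

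The remaining dilation identity $\mathbb{E}_{\cB_t}\circ\pi_s=\pi_t\circ T_{s-t}$, for $s\geq t$ in $\sfG$, is immediate once one knows that the $\tau_\cB$-preserving conditional expectation onto the ultraproduct subalgebra $\cB_t$ is realized componentwise, i.e.
\[
\mathbb{E}_{\cB_t}\bigl[(x_l)_l\bigr]=\bigl[(\mathbb{E}_{\cB^{(l)}_t}(x_l))_l\bigr].
\]
This is the one nontrivial point and I view it as the main technical obstacle, although it is a standard fact about tracial ultraproducts of finite von Neumann algebras. It is verified directly from the characterization of a conditional expectation via the trace pairing: each $\mathbb{E}_{\cB^{(l)}_t}$ is completely contractive in $\|\cdot\|_\infty$ and contractive in $\|\cdot\|_2$, so the componentwise map takes bounded sequences to bounded sequences and $\|\cdot\|_2$-null sequences to $\|\cdot\|_2$-null sequences, thus descends to a well-defined map $\cB\to\cB_t$, and one checks the defining trace-pairing property by pulling the trace through the ultralimit. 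The lemma asserts no path continuity, so nothing further needs to be verified at this stage; the continuity claim in Theorem \ref{discretization theorem} will be addressed once the dyadic dilation from this lemma is extended to all of $\mathbb{R}_{\geq 0}$.
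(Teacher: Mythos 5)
Your argument is essentially identical to the paper's: the paper also fixes a free ultrafilter, forms the Ocneanu (tracial) ultraproduct $\prod_{l,\omega}(\cB^l,\tau_{\cB^l})$, defines $\cB_m$ and $\pi_m$ componentwise for $m\in\sfG$, uses the same observation that $\mathbb{E}_{\cB_m}$ acts componentwise, and deduces the dilation identity by passing through the ultralimit. The proposal is correct and matches the paper's route step for step.
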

\begin{proof} For $l\geq 0,$ let $\sfG_l =2^{-l}\mathbb{N}_{\geq 0}$ so that  $\sfG = \cup_{l\geq0}\sfG_l.$ We see $\sfG$ as a subsemi-group of $\mathbb{R}_{\geq 0}$ and equip it with the Euclidean topology.
By assumption (with $\epsilon=2^{-l}$), there exists:
\begin{enumerate}[{\rm (i)}]
\item a finite von Neumann algebra $(\cB^l, \tau_{\cB^l});$
\item an increasing filtration  $(\cB_m^l)_{m \in \sfG_l}$ of $\cB^l;$
\item trace preserving $\ast$-homomorphisms $\pi_m^l: \cM \rightarrow \cB_m^l;$
\end{enumerate}
such that for $m,k \in\sfG_l$ with $k \geq m,$ we have
$$\mathbb{E}_{\cB_m^l}\circ \pi_k^l=\pi_m^l \circ T_{k-m}.$$
Set Ocneanu ultrapowers (see e.g. \cite{AndoHaagerup}) $(\cB,\tau_{\cB})=\prod_{l,\omega}(\cB^l, \tau_{\cB^l})$ and $\cB_m = \prod_{l, \omega}(\cB^l_m, \tau_{\cB_l})$ for $m\in \sfG.$ The second ultraproduct runs over large enough $l,$ namely such that $m \in \sfG_l.$

Fix $m_1,m_2\in \sfG$ such that $m_2 \leq m_1$ and choose $l_0$ such that $m_1,m_2\in\sfG_l$ for all $l\geq l_0.$ We have
$$\cB_{m_1}=\prod_{l, \omega}(\cB^l_{m_1},\tau_{\cB_l}),\quad \cB_{m_2}=\prod_{l, \omega}(\cB^l_{m_2}, \tau_{\cB_l}),$$
where ultrafilter runs over all $l\geq l_0.$ Since for every $l\geq l_0$ we have
$$\cB^l_{m_2}\subset \cB^l_{m_1},$$
it follows that
$$\cB_{m_2}\subset \cB_{m_1}.$$
Therefore, we have an increasing filtration.
Let $\mathbb{E}_{\cB_m}$ be the trace preserving  conditional expectation of $\cB$ onto $\cB_m.$ Note that
$$\mathbb{E}_{\cB_m}((x_l)_{l,\omega})=(\mathbb{E}_{\cB_m^l}(x_l))_{l,\omega}.$$

For $m \in \sfG,$ define a trace preserving $*-$homomorphism $\pi_m:\cM\to\cB_m$ by the formula
$$\pi_m(x)=(\pi_m^l(x))_{l, \omega}.$$
For $m,k\in\sfG$ with $k \geq m$, we have
$$(\mathbb{E}_{\cB_m})(\pi_k(x))=(\mathbb{E}_{\cB_m^l}(\pi_k^l(x)))_{l,\omega}=(\pi_m^l(T_{k-m}(x)))_{l,\omega}=\pi_m(T_{k-m}(x)).$$
\end{proof}

\begin{lem}\label{first l2 estimate} Let $\sfG$ be a subsemi-group in $\mathbb{R}_+$ and let $(T_t)_{t\geq0}$ be a Markov semi-group. If $(T_t)_{t\in\sfG}$ admits a standard Markov dilation, then for every $x\in\mathcal{M},$
$$\|\pi_t(x)-\pi_s(x)\|_2^2\leq 2\|x\|_2\|x-T_{|s-t|}(x)\|_2,\quad t,s\in\sfG.$$
\end{lem}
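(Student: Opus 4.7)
The plan is a direct expansion of the $L_2$-norm using the defining intertwining property of the dilation, followed by Cauchy--Schwarz.

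Without loss of generality I assume $s\leq t$. I start by expanding
\[
\|\pi_t(x)-\pi_s(x)\|_2^2 = \|\pi_t(x)\|_2^2 + \|\pi_s(x)\|_2^2 - 2\,\mathrm{Re}\,\tau_{\cB}\bigl(\pi_s(x)^\ast \pi_t(x)\bigr).
\]
Since $\pi_t$ and $\pi_s$ are trace preserving $\ast$-homomorphisms, $\|\pi_t(x)\|_2 = \|\pi_s(x)\|_2 = \|x\|_2$. For the cross-term, since $\pi_s(x)\in \cB_s$ and $\cB_s\subseteq \cB_t$ with $\mathbb{E}_{\cB_s}$ being $\tau_{\cB}$-preserving,
\[
\tau_{\cB}\bigl(\pi_s(x)^\ast \pi_t(x)\bigr) = \tau_{\cB}\bigl(\pi_s(x)^\ast \mathbb{E}_{\cB_s}(\pi_t(x))\bigr) = \tau_{\cB}\bigl(\pi_s(x)^\ast \pi_s(T_{t-s}(x))\bigr),
\]
where the last equality uses the dilation property. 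Because $\pi_s$ is a trace-preserving $\ast$-homomorphism, this equals $\tau(x^\ast T_{t-s}(x))$.

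Collecting the three pieces,
\[
\|\pi_t(x)-\pi_s(x)\|_2^2 = 2\|x\|_2^2 - 2\,\mathrm{Re}\,\tau\bigl(x^\ast T_{t-s}(x)\bigr) = 2\,\mathrm{Re}\,\tau\bigl(x^\ast(x - T_{t-s}(x))\bigr).
\]
Applying the tracial Cauchy--Schwarz inequality to the right-hand side yields
\[
\|\pi_t(x)-\pi_s(x)\|_2^2 \leq 2\|x\|_2\,\|x - T_{t-s}(x)\|_2,
\]
which is the desired estimate since $t-s = |t-s|$.

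There is no real obstacle here; the entire proof amounts to carefully unpacking the defining property $\mathbb{E}_{\cB_s}\circ \pi_t = \pi_s\circ T_{t-s}$ inside the inner product, together with the $\ast$-homomorphism and trace preservation properties of the $\pi_r$'s. The symmetry in $t,s$ and the appearance of $|t-s|$ come for free by swapping the roles of $t$ and $s$.
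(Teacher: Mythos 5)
Your proof is correct and follows essentially the same route as the paper: reduce the cross term via $\mathbb{E}_{\cB_s}\circ\pi_t=\pi_s\circ T_{t-s}$ and trace preservation, then apply Cauchy--Schwarz. The only cosmetic difference is that you package the two conjugate cross-terms as $2\,\mathrm{Re}\,\tau(x^\ast(x-T_{t-s}(x)))$ whereas the paper writes them out separately; these agree by traciality.
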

\begin{proof} Without loss of generality, $s\geq t.$ For $x\in\cM,$ we have by \cite[p. 211 (3) and (4)]{TakII},
\[
\begin{split}
&   \tau_{\cB}(\pi_t(x)^{\ast}\pi_s(x))=
(\tau_{\cB}\circ \mathbb{E}_{\cB_t})(\pi_t(x)^{\ast}\pi_s(x)) \\
= & \tau_{\cB}(\pi_t(x)^{\ast}\mathbb{E}_{\cB_t}(\pi_s(x)))=\tau_{\cB}(\pi_t(x)^{\ast}\pi_t(T_{s-t}(x))).
\end{split}
\]
Since $\pi_t$ is trace preserving, it follows that
$$\tau_{\cB}(\pi_t(x)^{\ast}\pi_s(x))=\tau(x^{\ast}T_{s-t}(x)),\quad t,s\in\sfG,\quad s\geq t.$$
Similarly,
$$\tau_{\cB}(\pi_s(x)^{\ast}\pi_t(x))=\tau(xT_{s-t}(x^{\ast})),\quad t,s\in\sfG,\quad s\geq t.$$
Therefore, we have
\[
\begin{split}
\|\pi_t(x)-\pi_s(x)\|_2^2= & \tau_{\cB}(\pi_t(x)^{\ast}\pi_t(x))+\tau_{\cB}(\pi_s(x)^{\ast}\pi_s(x)) \\
& - \tau_{\cB}(\pi_t(x)^{\ast}\pi_s(x))-\tau_{\cB}(\pi_s(x)^{\ast}\pi_t(x)) \\
=&
2\tau(x^{\ast}x)-\tau(x^{\ast}T_{s-t}(x))-\tau(xT_{s-t}(x^{\ast})) \\
= &\tau(x^{\ast} (x-T_{s-t}(x)))+\tau(x (x-T_{s-t}(x))^{\ast}).
\end{split}
\]
Applying the Cauchy-Schwarz inequality, we conclude the argument.
\end{proof}

%

We call a family $(\pi_t)_{t\geq0}$ of $\ast$-homomorphisms $\cM \rightarrow \cB$ continuous in the point-measure topology if for every $x \in \cM$ we have that  $t \mapsto \pi_t(x)$ is   continuous in measure.

\begin{lem}\label{pi continuity lemma} Let $\sfG$ be a dense subsemi-group in $\mathbb{R}_+$ and let $(T_t)_{t\geq0}$ be a Markov semi-group. If $(T_t)_{t\in\sfG}$ admits a standard Markov dilation, then $(\pi_t)_{t\in\sfG}$ extends to a family $(\pi_t)_{t\geq0}$  of trace preserving $^{\ast}$-homomorphisms so that, for every $x\in\cM,$ the mapping $t\to\pi_t(x)$ is continuous in measure.
\end{lem}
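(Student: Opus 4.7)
The plan is to use the $L_2$-estimate in Lemma \ref{first l2 estimate} to show that $s\mapsto \pi_s(x)$ is uniformly continuous on $\sfG$ in the $L_2$-topology, then extend to $\mathbb{R}_{\geq 0}$ by taking $L_2$-limits along sequences in $\sfG$, and finally verify that the extension inherits the $\ast$-homomorphism and trace preservation properties, plus the desired continuity.

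First I would observe that, because $\cT$ is Markov, for every $x\in\cM$ we have $\|T_u(x)-x\|_\infty\leq 2\|x\|_\infty$ while $T_u(x)\to x$ in measure as $u\to 0^+$ (Definition \ref{Dfn=MarkovSemi}\,(\ref{markoviii})); since measure convergence of bounded nets in $\cM$ is equivalent to $L_2$-convergence, this gives $\|x-T_u(x)\|_2\to 0$ as $u\to 0^+$. Combining this with Lemma \ref{first l2 estimate} shows that, for any sequence $(s_n)\subset \sfG$ converging to some $t_0\in\mathbb{R}_{\geq 0}$, the sequence $(\pi_{s_n}(x))$ is Cauchy in $L_2(\cB)$; density of $\sfG$ in $\mathbb{R}_{\geq 0}$ is what lets us pick such approximating sequences. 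Using Lemma \ref{first l2 estimate} again applied to two different approximating sequences $s_n,s_n'\to t_0$, the difference $\|\pi_{s_n}(x)-\pi_{s_n'}(x)\|_2$ tends to $0$, so the limit is independent of the chosen sequence. Define $\pi_{t_0}(x)$ as this common $L_2$-limit; for $t_0\in\sfG$ the constant sequence $s_n=t_0$ recovers the original $\pi_{t_0}$, so the construction is a genuine extension.

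Next I would upgrade the pointwise $L_2$-convergence $\pi_{s_n}(x)\to\pi_{t_0}(x)$ to an algebraic statement. Since each $\pi_{s_n}$ is a $\ast$-homomorphism, $\|\pi_{s_n}(x)\|_\infty\leq \|x\|_\infty$; hence the limit $\pi_{t_0}(x)$ lies in the unit ball of $\cB$ of radius $\|x\|_\infty$ (a weak-$\ast$ closed set). On norm-bounded subsets of $\cB$, multiplication and the adjoint are jointly continuous in the measure topology, which coincides with the $L_2$-topology on such subsets. Therefore from $\pi_{s_n}(x)\to\pi_{t_0}(x)$ and $\pi_{s_n}(y)\to\pi_{t_0}(y)$ in $L_2$ we deduce $\pi_{s_n}(xy)=\pi_{s_n}(x)\pi_{s_n}(y)\to\pi_{t_0}(x)\pi_{t_0}(y)$ in measure, while the left-hand side also converges to $\pi_{t_0}(xy)$; uniqueness of limits yields $\pi_{t_0}(xy)=\pi_{t_0}(x)\pi_{t_0}(y)$, and the $\ast$-compatibility is analogous. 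Trace preservation is immediate: $\tau_\cB(\pi_{t_0}(x))=\lim_n \tau_\cB(\pi_{s_n}(x))=\lim_n \tau(x)=\tau(x)$, using that $L_2$-convergence implies convergence of $\tau_\cB(\cdot)$.

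Finally, for continuity I would pass to the limit in the estimate of Lemma \ref{first l2 estimate} itself: given $t,s\in\mathbb{R}_{\geq 0}$, pick $t_n,s_n\in\sfG$ with $t_n\to t$ and $s_n\to s$; the estimate at $(t_n,s_n)$ passes through the limits (the right-hand side converging by continuity in measure of $T_{|s_n-t_n|}(x)$ together with the uniform $L_\infty$-bound) to give
\[
\|\pi_t(x)-\pi_s(x)\|_2^2 \leq 2\|x\|_2\|x-T_{|s-t|}(x)\|_2, \qquad t,s\geq 0.
\]
As $s\to t$, the right-hand side goes to $0$, so $\pi_s(x)\to\pi_t(x)$ in $L_2$, hence in measure. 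The main obstacle is really in the previous paragraph: making sure the $L_2$-limit of $\ast$-homomorphisms is again a $\ast$-homomorphism, which crucially uses the uniform $\|\cdot\|_\infty$-bound coming from $\ast$-homomorphisms being contractive, so that multiplication is continuous in measure on the relevant range.
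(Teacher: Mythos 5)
Your proposal is correct and takes essentially the same approach as the paper: the paper simply states that the lemma is ``a direct consequence of Lemma~\ref{first l2 estimate}'' together with the continuity-in-measure hypothesis, and your write-up supplies exactly the details this one-liner leaves implicit (Cauchyness in $L_2$ from the estimate, well-definedness of the $L_2$-limit, preservation of the $\ast$-homomorphism structure via joint continuity of multiplication in measure on norm-bounded sets, trace preservation, and passage to the limit in the inequality to obtain continuity in measure of $t\mapsto\pi_t(x)$).
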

\begin{proof}
From the fact that Markov semi-groups are by definition continuous in measure this is a direct consequence of Lemma \ref{first l2 estimate}.
\end{proof}

\begin{proof}[Proof of Theorem \ref{discretization theorem}] Let $\sfG = \cup_{l \in \mathbb{N}_{\geq 0}} 2^{-l} \mathbb{N}_{\geq 0}$ be the set of all non-negative binary rationals. By Lemma \ref{binary lemma}, $(T_t)_{t\in\sfG}$ admits a standard Markov dilation. Set
$$\cB_t=\Big(\bigcup_{\substack{u \leq t\\ u\in\sfG}}\cB_u\Big)''.$$
In the following equations we shall take the limit $u \rightarrow t$ over the sets in the subscript of the limit. By construction, we have
$$\mathbb{E}_{\cB_t}(w)=\lim_{\substack{u\leq t\\ u\in\sfG }}\mathbb{E}_{\cB_u}(w),\quad w\in\cB,$$
in the $L_2$-topology. Let $(\pi_t)_{t\geq0}$ be an $L_2$-continuous family of trace preserving $*$-homomorphisms constructed in Lemma \ref{pi continuity lemma}.

If $s\geq t$ and $s\in\sfG,$ then
$$\mathbb{E}_{\cB_t}(\pi_s(x))=\lim_{\substack{u\leq t\\ u\in\sfG}}\mathbb{E}_{\cB_u}(\pi_s(x))=\lim_{\substack{u\leq t\\ u\in\sfG}}\pi_u(T_{s-u}(x)).$$
By assumption, we have
$$\lim_{\substack{u\leq t\\ u\in\sfG}}T_{s-u}(x)=\lim_{\substack{u\leq t\\ u\in\sfG}}T_{t-u}(T_{s-t}(x))=T_{s-t}(x)$$
 in the $L_2-$norm. Each $\pi_u,$ $u\geq0,$ is a trace preserving $*$-homomorphism and  therefore  contracts the $L_2-$norm. Hence, as $(\pi_t)_{t \in \sfG}$ are $\ast$-homomorphisms of a  Markov dilation we see   by Lemma \ref{pi continuity lemma} that we have a limit in measure,
$$\mathbb{E}_{\cB_t}(\pi_s(x))=\lim_{\substack{u\leq t\\ u\in\sfG}}\pi_u(T_{s-t}(x)) \stackrel{L.\ref{pi continuity lemma}}{=}\pi_t(T_{s-t}(x)).$$

Let now $s\geq t\geq0.$ If $s_k\in\sfG,$ $s_k\searrow s,$ then $\pi_{s_k}(x)\to\pi_s(x)$  in the $L_2-$norm. Since $\mathbb{E}_{\cB_t}$ contracts the $L_2-$norm, it follows that
$$\mathbb{E}_{\cB_t}(\pi_s(x))=\lim_{k\to\infty}\mathbb{E}_{\cB_t}(\pi_{s_k}(x))=\lim_{k\to\infty}\pi_t(T_{s_k-t}(x))$$
We have $T_{s_k-t}(x)=T_{s_k-s}(T_{s-t}(x)).$ By assumption, $T_{s_k-s}(x)\to x$   in the $L_2-$norm and  hence,
$$\mathbb{E}_{\cB_t}(\pi_s(x))=\lim_{k\to\infty}\pi_t(T_{s_k-t}(x))= \pi_t(T_{s-t}(x)).$$
This completes the proof.
\end{proof}

As a direct corollary of Lemma \ref{first l2 estimate} we obtain the following automatic continuity property for finite von Neumann algebras.

\begin{cor}   Let $(\mathcal{M},\tau)$ be a finite von Neumann algebra. Let $\cT = (T_t)_{t\geq 0}$ be a Markov semi-group that admits a standard Markov dilation. Then the dilation has continuous path.
\end{cor}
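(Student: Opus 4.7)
The plan is to invoke Lemma \ref{first l2 estimate} directly, reducing path continuity of $(\pi_t)_{t \geq 0}$ to the semi-group continuity of $\cT$ that is built into the definition of a Markov semi-group.

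First I would fix $x \in \cM$ and observe that, since $T_t$ is unital, completely positive, and trace-preserving, it is a contraction on $\cM$ in operator norm. Hence the net $(T_r(x))_{r \geq 0}$ stays in the ball of radius $\|x\|_\infty$. On this ball the measure topology and the $L_2$-topology coincide (a standard fact for finite von Neumann algebras that the authors use repeatedly). Combined with the continuity in measure property (Definition \ref{Dfn=MarkovSemi}(iii)) of the Markov semi-group at $r = 0$, this gives
\[
\|x - T_r(x)\|_2 \longrightarrow 0 \quad \text{as } r \searrow 0.
\]

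Next I would apply Lemma \ref{first l2 estimate} with $\sfG = \mathbb{R}_{\geq 0}$: for any $s, t \geq 0$,
\[
\|\pi_t(x) - \pi_s(x)\|_2^2 \leq 2 \|x\|_2 \, \|x - T_{|s-t|}(x)\|_2.
\]
Combining with the previous display shows that $t \mapsto \pi_t(x)$ is $L_2$-continuous on $\mathbb{R}_{\geq 0}$. Finally, since each $\pi_t$ is a $\ast$-homomorphism, $\|\pi_t(x)\|_\infty \leq \|x\|_\infty$, so the trajectory stays in a bounded ball of $\cB$; on that ball the $L_2$-topology again agrees with the measure topology, hence $t \mapsto \pi_t(x)$ is continuous in measure, as required.

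There is essentially no obstacle: the only small point to keep straight is that continuity in measure of the Markov semi-group is a priori weaker than $L_2$-continuity, but the uniform bound $\|T_r(x)\|_\infty \leq \|x\|_\infty$ promotes it to $L_2$-continuity on the bounded set in question, which is exactly what Lemma \ref{first l2 estimate} needs as input.
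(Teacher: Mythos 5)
Your proof is correct and follows the same route the paper intends: the corollary is stated there as a direct consequence of Lemma~\ref{first l2 estimate} (via Lemma~\ref{pi continuity lemma}), and you have simply filled in the small glue steps — that unital positivity makes $T_r$ an operator-norm contraction so the trajectory $T_r(x)$ stays in a bounded ball where measure and $L_2$ topologies agree, upgrading measure-continuity at $0$ to $\|x-T_r(x)\|_2\to 0$, and that the $\ast$-homomorphic $\pi_t$ keep $\pi_t(x)$ in a bounded ball of $\cB$, so $L_2$-continuity of $t\mapsto\pi_t(x)$ gives continuity in measure. No gaps.
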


\section{Markov dilations for semi-groups of double operator integrals}\label{Sect=DOIDilate}
In \cite{RicardDilation} Ricard proved that discrete semi-groups of Schur multipliers admit a standard Markov dilation. In this section we show that also semi-groups of double operator integrals have a standard Markov dilation.  In what follows, this fact, together with Theorem \ref{Thm=JungeMeiInterpolation} allows us to interpolate between respective ${\rm BMO}-$space and $L_2-$space. 

\begin{thm}\label{Thm=DOIMarkovDilation} Let $(\cM,\tau)$ be a finite von Neumann algebra and let $\cT = (T_t)_{t\geq 0}$ be a Markov semi-group such that there exists  $A=A^*\in\mathcal{M}$ and $\phi_t: \mathbb{R}^2 \rightarrow \mathbb{C}$ continuous  with $T_t = \cI_{\phi_t}^A, t \geq 0$. Then  the semi-group $\cT$ admits a standard Markov dilation.
\end{thm}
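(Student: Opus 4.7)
The strategy is a three-step reduction: discretize time, approximate $A$ by finite-spectrum operators, and pass to a limit by ultraproduct. By Theorem \ref{discretization theorem} it suffices to exhibit a standard Markov dilation for each discrete sub-semi-group $(T_{k\epsilon})_{k \in \mathbb{N}_{\geq 0}}$, $\epsilon > 0$, and the semi-group property $T_\epsilon^k = T_{k\epsilon}$ reads $\phi_\epsilon^k = \phi_{k\epsilon}$ on $\sigma(A)\times\sigma(A)$.

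Fix $\epsilon > 0$. For each $l \in \mathbb{N}$ partition $\sigma(A)$ into Borel sets $B_i^{(l)}$ of diameter at most $1/l$, choose representatives $\lambda_i^{(l)} \in B_i^{(l)} \cap \sigma(A)$, and set $p_i^{(l)} := E(B_i^{(l)})$ where $E$ is the spectral measure of $A$. Define $A_l := \sum_i \lambda_i^{(l)} p_i^{(l)}$, so $\sigma(A_l) \subseteq \sigma(A)$ is finite, and consider
\[
T^{(l)} := \cI_{\phi_\epsilon}^{A_l} = \sum_{i,j} \phi_\epsilon(\lambda_i^{(l)}, \lambda_j^{(l)})\, p_i^{(l)}\, (\,\cdot\,)\, p_j^{(l)}.
\]
Since $\cI_{\phi_\epsilon}^A$ is ucp on $\cM$ and the kernel is sampled at points of $\sigma(A)$, the matrix $[\phi_\epsilon(\lambda_i^{(l)}, \lambda_j^{(l)})]_{i,j}$ is positive definite, so $T^{(l)}$ is a trace-preserving ucp Schur multiplier with respect to the finite family $(p_i^{(l)})_i$, after first subdividing the partition so that the projections can be made mutually equivalent. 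Moreover $(T^{(l)})^k = \cI_{\phi_{k\epsilon}}^{A_l}$ because $\sigma(A_l) \subseteq \sigma(A)$. By \cite{RicardDilation} the discrete semi-group $((T^{(l)})^k)_k$ admits a standard Markov dilation $(\cB^{(l)}, \tau^{(l)}, (\cB_k^{(l)})_k, (\pi_k^{(l)})_k)$.

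Following the ultraproduct construction of Lemma \ref{binary lemma}, let $(\cB, \tau_\cB) := \prod_{l, \omega}(\cB^{(l)}, \tau^{(l)})$, $\cB_k := \prod_{l, \omega} \cB_k^{(l)}$, and $\pi_k(x) := (\pi_k^{(l)}(x))_{l, \omega}$. Trace preservation, the $\ast$-homomorphism property, and the filtration relation $\cB_k \subseteq \cB_m$ for $k \leq m$ all pass to the ultraproduct, and $\mathbb{E}_{\cB_k}((x_l)_{l, \omega}) = (\mathbb{E}_{\cB_k^{(l)}}(x_l))_{l, \omega}$. The compatibility identity $\mathbb{E}_{\cB_k} \circ \pi_m = \pi_k \circ T_{(m-k)\epsilon}$ reduces, using that each $\pi_k^{(l)}$ is an $L_2$-isometry, to the norm convergence
\[
\| \cI_{\phi_{(m-k)\epsilon}}^{A_l}(x) - T_{(m-k)\epsilon}(x) \|_2 \to 0 \qquad (l \to \infty),
\]
a minor variant of Lemma \ref{Lem=ConvDOI}. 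A second application of Theorem \ref{discretization theorem} then upgrades this discrete dilation to a standard Markov dilation of $\cT$ with continuous paths.

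The principal obstacle is fitting $T^{(l)}$ into Ricard's framework: the positive-definiteness of the restricted kernel (which is the reason for insisting on $\sigma(A_l) \subseteq \sigma(A)$) together with enough matrix-unit symmetry among the $p_i^{(l)}$, handled by subdividing projections to have (rationally) equal trace and passing to an amplification of $\cM$ if necessary. A secondary but important technical point is verifying the compatibility identity after ultraproduct, which ultimately reduces to the $L_2$-convergence provided by the variant of Lemma \ref{Lem=ConvDOI}.
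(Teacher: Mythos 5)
Your three-step skeleton (reduce to discrete time via Theorem~\ref{discretization theorem}, approximate $A$ by finite-spectrum operators, pass to the limit by ultraproduct) is exactly the paper's strategy, and your observation that sampling $\lambda_i^{(l)}$ inside $\sigma(A)$ forces $\sigma(A_l)\subseteq\sigma(A)$ and hence $(T^{(l)})^k=\cI_{\phi_{k\epsilon}}^{A_l}$ is a nice simplification over the paper's $A_l=\tfrac1l\lfloor lA\rfloor$ (whose spectrum leaves $\sigma(A)$ and so forces the telescoping estimate in Proposition~\ref{markov approximation prop}). The gap lies entirely in how you handle the finite-spectrum stage. The paper does not invoke Ricard's Schur-multiplier theorem as a black box; instead, Proposition~\ref{Thm=RicardDilation} reproves the dilation directly for double operator integrals on an arbitrary finite $\cM$, via the explicit Clifford-algebra/tensor-shift construction $\cB=\cM\otimes\Gamma^{\otimes\infty}$, $u=\sum_i p_i\otimes s(e_i)$. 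You instead try to reduce $\cI_{\phi_\epsilon}^{A_l}$ on $\cM$ to a literal Schur multiplier on a matrix algebra by ``subdividing the partition so that the projections can be made mutually equivalent'' and ``passing to an amplification of $\cM$ if necessary.'' This reduction is not worked out and does not go through in general: (i) in a non-factor, the $p_i^{(l)}$ need not be comparable at all, so no subdivision produces mutually equivalent pieces; (ii) even in a $II_1$ factor, traces $\tau(p_i^{(l)})$ need not be rationally commensurate, so the claimed subdivision requires yet another limiting argument you do not supply; (iii) your assertion that complete positivity of $\cI_{\phi_\epsilon}^{A}$ forces $[\phi_\epsilon(\lambda_i^{(l)},\lambda_j^{(l)})]$ to be positive semi-definite is false in general. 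For example, if $\cM$ is abelian and $A$ has two spectral projections, then $p_0\cM p_1=0$, so $\cI_{\phi}^{A}=\mathrm{id}$ for \emph{every} $\phi$ with $\phi(\lambda,\lambda)=1$; complete positivity holds while the sampled $2\times 2$ matrix can be indefinite. One would have to re-extend $\phi_\epsilon$ on the ``invisible'' entries $(i,j)$ with $p_i\cM p_j=0$, and justify that a positive-definite completion exists, before Ricard applies.

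In short, steps 1, 2, and 4 of your outline match the paper and are fine, but step 3 is not a minor fitting problem as your last paragraph suggests; it is the mathematical core. The honest way to fill it is either to supply a complete reduction to Ricard's $M_n(\mathbb{C})$ result (which requires: proving psd of a suitably chosen completion of the sampled matrix, handling the non-factor and irrational-trace cases, and then tensoring the dilation algebra with the corner $p_1\cM p_1$) or, as the paper does, to abandon the black-box reduction and prove the dilation directly in the double-operator-integral setting. The paper's Proposition~\ref{Thm=RicardDilation}, whose hypothesis is just ``$\phi$ a positive matrix with unit diagonal and $\mathrm{spec}(A)\subset\mathbb{Z}$,'' makes this self-contained and sidesteps every one of the comparison-theory complications you allude to.
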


The crucial part of the argument is similar to that of Ricard \cite{RicardDilation}.

\begin{prop}\label{Thm=RicardDilation}
Let $(\cM, \tau)$ be a finite von Neumann algebra and let $A=A^*\in\mathcal{M}$ be such that ${\rm spec}(A)\subset\mathbb{Z}.$ Let $\phi:\mathbb{Z}^2 \rightarrow \mathbb{R}$ be a positive matrix such that for all $i$ we have $\phi(i,i) =1.$ The semi-group $((\cI_{\phi}^A)^n)_{n\in\mathbb{N}_{\geq0}}$ acting on $\cM$ admits a standard Markov dilation.
\end{prop}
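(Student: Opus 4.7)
The plan is to mimic Ricard's free-probabilistic construction of Markov dilations for Schur multipliers, with the matrix units of $M_n$ replaced by the spectral projections $p_i=\chi_{\{i\}}(A)$. Since $A$ is bounded and $\sigma(A)\subset\mathbb{Z}$, only finitely many $p_i$ are non-zero; let $I:=\{i\in\mathbb{Z}:p_i\neq 0\}$, so that $T:=\cI_\phi^A$ acts on $\cM$ by $T(x)=\sum_{i,j\in I}\phi(i,j)p_ixp_j$. The core task is to realise $\phi|_{I\times I}$ as the trace-Gram matrix of \emph{unitaries} $u_i$ in some finite tracial von Neumann algebra $(\cC,\tau_\cC)$; once this is done, the rest follows the standard iterative recipe.

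The Clifford algebra of Section~2 provides such unitaries for free. Since $\phi|_{I\times I}$ is a positive real matrix with unit diagonal, write $\phi(i,j)=\langle v_i,v_j\rangle$ for unit vectors $v_i$ in a finite-dimensional real Hilbert space $H_{\mathbb{R}}$ (e.g.\ take $v_i:=\phi^{1/2}e_i$ in $\ell^2(I)_{\mathbb{R}}$), and set $\cC:=\Gamma(H_{\mathbb{R}})$, $\tau_\cC:=\tau_\Omega$, and $u_i:=s(v_i)$. The anticommutation relation \eqref{anticommutation} applied with $\xi=\eta=v_i$ yields $u_i^2=\|v_i\|^2\cdot 1=1$, so each $u_i$ is a self-adjoint unitary; combining \eqref{anticommutation} with traciality of $\tau_\Omega$ gives $\tau_\cC(u_iu_j)=\langle v_i,v_j\rangle=\phi(i,j)$.

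With these ingredients in hand the one-step dilation is straightforward. Define $U:=\sum_{i\in I}p_i\otimes u_i\in\cM\otimes\cC$; the identities $\sum_ip_i=1$, $p_ip_j=\delta_{ij}p_i$, and $u_i^\ast u_i=u_iu_i^\ast=1$ force $U^\ast U=UU^\ast=1$. The map $\alpha:\cM\to\cM\otimes\cC$ given by $\alpha(x):=U^\ast(x\otimes 1)U=\sum_{i,j\in I}p_ixp_j\otimes u_iu_j$ is therefore a trace-preserving $\ast$-monomorphism, and $(\id_\cM\otimes\tau_\cC)\circ\alpha=T$. To globalise, take independent copies $(\cC_n,\tau_n)_{n\geq 1}$ of $(\cC,\tau_\cC)$, let $\cB$ be the infinite tensor product $\cM\otimes\bigotimes_{n\geq 1}\cC_n$ with the product trace, set $\cB_n:=\cM\otimes\bigotimes_{k=1}^n\cC_k$ (with $\cB_0:=\cM$), and define $\pi_n:\cM\to\cB_n$ inductively by $\pi_0:=\id$ and $\pi_n:=(\pi_{n-1}\otimes\id_{\cC_n})\circ\alpha^{(n)}$, where $\alpha^{(n)}$ is the copy of $\alpha$ with values in $\cM\otimes\cC_n$. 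Since $\id_{\cB_{n-1}}\otimes\tau_n$ and $\pi_{n-1}\otimes\id_{\cC_n}$ act on disjoint tensor legs and therefore commute, a direct Fubini-type computation gives $\mathbb{E}_{\cB_{n-1}}\circ\pi_n=\pi_{n-1}\circ T$, and by iteration $\mathbb{E}_{\cB_m}\circ\pi_n=\pi_m\circ T^{n-m}$ for $0\leq m\leq n$, which is precisely the required standard Markov dilation of $(T^n)_{n\in\mathbb{N}_{\geq 0}}$.

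The main obstacle is the construction of the tracial unitaries $u_i$ realising the prescribed Gram structure. Naive attempts, such as linear combinations $u_i=\sum_k(\phi^{1/2})_{ki}\lambda_k$ of free Haar unitaries in $L(\mathbb{F}_{|I|})$, give elements with the right trace inner products but fail to be unitary; fixed-size matrix algebras cannot realise an arbitrary correlation matrix as the trace-Gram matrix of unitaries either. The Clifford/CAR algebra $\Gamma(H_{\mathbb{R}})$ is the natural setting that simultaneously accommodates the positivity of $\phi$ (via the anticommutation relations) and delivers genuine self-adjoint unitarity. This is the non-routine ingredient, and its role here is precisely parallel to the free-probabilistic device used by Ricard in \cite{RicardDilation} for Schur multipliers.
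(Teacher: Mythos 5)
Your proof is correct and follows essentially the same approach as the paper: both realise $\phi$ as the trace--Gram matrix of the self-adjoint unitaries $s(\cdot)$ in a Clifford algebra built from $\phi$, form the unitary $U=\sum_i p_i\otimes s(\cdot)$, and iterate the resulting one-step coupling on an infinite tensor power to obtain the filtration. The only cosmetic difference is that you define the $\pi_n$ by recursion, whereas the paper packages the same recursion as conjugation by $U$ composed with a tensor shift; the two give the identical formula $\pi_n(x)=\sum_{i,j}p_ixp_j\otimes(s(e_i)s(e_j))^{\otimes n}$.
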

\begin{proof} As $A$ is bounded, we have ${\rm spec}(A)\subset\{-n,1-n,\cdots,n-1,n\}$ for some $n\in\mathbb{N}.$ Denote for brevity $p_i=\chi_{\{i\}}(A),$ $-n\leq i\leq n.$ As $\phi$ is positive, the expression
\begin{equation}\label{Eqn=PhiInner}
\langle \xi, \eta \rangle = \sum_{i,j =-n}^n\phi(i,j)\xi_i\eta_j.
\end{equation}
defines a positive  (possibly degenerate) inner product on $\mathbb{R}^{2n+1}.$  Let $H_\mathbb{R}$  be $\mathbb{R}^{2n+1}$ {\it equipped with inner product} \eqref{Eqn=PhiInner} and quotienting out the degenerate part. Construct the associated exterior algebra $\Gamma = \Gamma(H_{\mathbb{R}})$ from it.

Let $\{e_i\}_{i=-n}^n$ be the standard orthonormal basis of $\mathbb{R}^{2n+1}$ viewed as elements (e.g. equivalence classes) of $H_{\mathbb{R}}.$
Let $\cB = \cM \otimes  \Gamma^{\otimes \infty}$ with tensor product trace $\tau_{\cB} = \tau \otimes \tau_{\Omega}^{\otimes \infty}$ (tensor products constructed from the vacuum state, see \cite{TakIII} for infinite tensor powers). We infer from \eqref{anticommutation} that $s(e_i)^2=1.$ Define a unitary
\[
u = \sum_{i=-n}^n p_i \otimes s(e_i)  \in \cM \otimes \Gamma,
\]
which we view as a unitary in the first two tensor legs of $\cB = \cM \otimes \Gamma^{\otimes \infty} = \cM \otimes \Gamma \otimes \Gamma^{\otimes \infty}$ by identifying it with $u \otimes 1_{\Gamma}^{\otimes \infty}$.
Let $S$ be the tensor shift on $\Gamma^{\otimes \infty}$ determined by,
$$S (x_1 \otimes \ldots \otimes x_m \otimes 1 \otimes \ldots) = 1 \otimes x_1 \otimes \ldots \otimes x_m \otimes 1 \otimes \ldots,$$
and then set the $\ast$-homomorphism $\beta(x) =   u^\ast (\id_{\cM} \otimes S)(x) u.$
For $k\geq0,$ define a trace-preserving $\ast$-homomorphism  $\pi_k:\cM\to\cB$ as follows:
$$\pi_0: x \mapsto x \otimes 1 \otimes 1 \ldots$$
and
$$\pi_k: x \mapsto (\beta^k \circ \pi_0)(x), \qquad k \in \mathbb{N}_{\geq 0}.$$
Using induction we obtain that,
\begin{equation}\label{Eqn=Explicit}
\pi_k(x) = \sum_{i,j=-n}^np_i x p_j  \otimes (s(e_i) s(e_j))^{\otimes k }  \otimes 1^{\otimes \infty}_\Gamma \in \cB.
\end{equation}
Indeed,
\[
\begin{split}
\pi_{k+1}(x) = &  u^\ast  (\id_\cM \otimes S)  \left(  \sum_{i,j=-n}^np_i x p_j  \otimes (s(e_i) s(e_j))^{\otimes k }  \otimes 1^{\otimes \infty}_\Gamma  \right) u \\
= & u^\ast   \left(  \sum_{i,j=-n}^np_i x p_j  \otimes 1_{\Gamma} \otimes  (s(e_i) s(e_j))^{\otimes k }  \otimes 1^{\otimes \infty}_\Gamma  \right) u \\
= &  \sum_{i,j=-n}^np_i x p_j  \otimes  (s(e_i) s(e_j))^{\otimes k+1 }  \otimes 1^{\otimes \infty}_\Gamma.
\end{split}
\]

Define the increasing family of subalgebras $\cB_m$ as the von Neumann algebras $\mathcal{M} \otimes \Gamma^{\otimes m} \otimes 1_\Gamma^{\infty} \subseteq \cB$.
If $k \geq m$ and if $x\in\cM$ is such that $p_ixp_j=x,$ then a direct computation yields
$$\mathbb{E}_m(x\otimes (s(e_i) s(e_j))^{\otimes k}  \otimes 1^{\otimes \infty}_\Gamma)=\tau_{\Omega}(s(e_i) s(e_j))^{k-m} x\otimes (s(e_i) s(e_j))^{\otimes m}  \otimes 1^{\otimes \infty}_\Gamma.$$
We get that for $x\in\cM$ and for $k \geq m$,
$$(\mathbb{E}_m \circ \pi_k)(x)=\sum_{i,j =-n}^n \tau_{\Omega}(s(e_i) s(e_j))^{k-m} \:  p_i x p_j \otimes (s(e_i)s(e_j) )^{\otimes m} \otimes 1^\infty_\Gamma.$$
By \eqref{anticommutation} and \eqref{Eqn=PhiInner},
$$\tau_{\Omega}(s(e_i) s(e_j)) = \langle e_i, e_j \rangle = \phi(i,j).$$
Therefore,
$$\tau_{\Omega}(s(e_i) s(e_j))^{k-m} \:  p_i x p_j = \phi(i,j)^{k-m} p_i x p_j =(\cI_\phi^A)^{k-m}(p_i x p_j) =p_i((\cI_\phi^A)^{k-m}(x))p_j.$$
Hence,
\begin{equation}\label{Eqn=PrevLine}
(\mathbb{E}_m \circ \pi_k)(x)=\sum_{i,j =-n}^n p_i((\cI_\phi^A)^{k-m}(x))p_j \otimes (s(e_i)s(e_j) )^{\otimes m} \otimes 1^\infty_\Gamma.
\end{equation}
By \eqref{Eqn=Explicit} and   \eqref{Eqn=PrevLine} we get,
$$(\mathbb{E}_m \circ \pi_k)(x) = \pi_m((\cI_\phi^A)^{k-m}(x)).$$
This completes the proof.
\end{proof}

The passage to operators $A$ with arbitrary spectrum (not just integral) requires the approximation result  below.

\begin{prop}\label{markov approximation prop} Let $T$ and $T_l$ be unital trace preserving maps on $\cM$ such that $T_l(x)\to T(x)$ strongly for all $x\in\mathcal{M}.$ If every semi-group $(T_l^n)_{n\in\mathbb{N}_{\geq0}}$ admits a standard Markov dilation, then so does the semi-group $(T^n)_{n\in\mathbb{N}_{\geq0}}.$
\end{prop}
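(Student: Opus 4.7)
The natural strategy is an Ocneanu ultraproduct construction, parallel to the proof of Lemma \ref{binary lemma}, with the wrinkle that here the individual maps $T_l$ change with $l$ while we are aiming at the dilation of a single limit $T$.

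For each $l$, invoke the hypothesis to obtain a finite von Neumann algebra $(\cB^l,\tau_{\cB^l})$, an increasing filtration $(\cB^l_n)_{n\in\mathbb{N}_{\geq 0}}$, and trace preserving $\ast$-homomorphisms $\pi_n^l:\cM\to\cB^l_n$ satisfying $\mathbb{E}_{\cB^l_n}\circ\pi_k^l=\pi_n^l\circ T_l^{k-n}$ for $k\geq n$. Fix a free ultrafilter $\omega$ on $\mathbb{N}$ and set $(\cB,\tau_{\cB})=\prod_{l,\omega}(\cB^l,\tau_{\cB^l})$ and $\cB_n=\prod_{l,\omega}(\cB^l_n,\tau_{\cB^l_n})$. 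These form an increasing filtration of $\cB$ because the inclusions $\cB^l_n\subset\cB^l_{n+1}$ hold for every $l$, and the conditional expectation is the termwise one: $\mathbb{E}_{\cB_n}((x_l)_{l,\omega})=(\mathbb{E}_{\cB^l_n}(x_l))_{l,\omega}$. Define $\pi_n(x)=(\pi_n^l(x))_{l,\omega}$; each $\pi_n$ is a trace preserving $\ast$-homomorphism $\cM\to\cB_n$ since each $\pi_n^l$ is.

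The heart of the argument is to check $\mathbb{E}_{\cB_n}\circ\pi_k=\pi_n\circ T^{k-n}$ for $k\geq n$. From the dilation property at each level $l$, one has
\[
\mathbb{E}_{\cB_n}(\pi_k(x))=\bigl(\mathbb{E}_{\cB^l_n}(\pi_k^l(x))\bigr)_{l,\omega}=\bigl(\pi_n^l(T_l^{k-n}(x))\bigr)_{l,\omega},
\]
and one wants to identify this with $\pi_n(T^{k-n}(x))=(\pi_n^l(T^{k-n}(x)))_{l,\omega}$. Two bounded sequences represent the same element of the Ocneanu ultraproduct precisely when their $L_2$-difference tends to $0$ along $\omega$, so it suffices to show that $\Vert\pi_n^l(T_l^{k-n}(x))-\pi_n^l(T^{k-n}(x))\Vert_2\to 0$. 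But each $\pi_n^l$ is an $L_2$-isometry, so this reduces to $\Vert T_l^{k-n}(x)-T^{k-n}(x)\Vert_2\to 0$.

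The latter follows from the hypothesis by induction on $m:=k-n$. For $m=1$ this is the strong convergence assumption (strong convergence on $\cM$ on the unit ball is $L_2$-convergence). For the step, write
\[
T_l^{m+1}(x)-T^{m+1}(x)=T_l(T_l^m(x)-T^m(x))+(T_l-T)(T^m(x)),
\]
observe that both $T_l$ and $T$ extend to $L_2$-contractions (being unital trace preserving completely positive), and conclude that both summands tend to $0$ in $L_2$. This gives the required dilation identity and completes the proof. The only real obstacle is the bookkeeping in the previous display: one must carefully exploit that the Ocneanu ultraproduct collapses $L_2$-null sequences, so that the ``moving'' maps $T_l^{k-n}$ on the $l$-th coordinate may be replaced by the limit $T^{k-n}$ at the cost of nothing along $\omega$.
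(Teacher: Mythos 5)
Your proof is correct and follows essentially the same route as the paper: the Ocneanu ultraproduct construction with termwise expectations and homomorphisms, reduction of the dilation identity to $\lVert T_l^{k-n}(x)-T^{k-n}(x)\rVert_2\to 0$ via the $L_2$-isometry property of the $\pi_n^l$, and then the estimate for iterates. Your induction step $T_l^{m+1}(x)-T^{m+1}(x)=T_l\bigl(T_l^m(x)-T^m(x)\bigr)+(T_l-T)(T^m(x))$ is just the unfolded form of the paper's telescoping inequality $\lVert T_l^{k-n}(x)-T^{k-n}(x)\rVert_2\leq\sum_j\lVert(T_l-T)(T^jx)\rVert_2$, so the two arguments coincide.
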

\begin{proof} By assumption, there exists
\begin{enumerate}[{\rm (i)}]
\item a finite von Neumann algebra $(\cB^l, \tau_{\cB^l});$
\item an increasing filtration $(\cB_m^l)_{m\in \mathbb{N}_{\geq 0}};$
\item trace preserving $\ast$-homomorphisms $\pi_m^l: \cM \rightarrow \cB_m^l;$
\end{enumerate}
such that for $m,k\in \mathbb{N}_{\geq 0}$ with $k\geq m,$ we have
\[
\mathbb{E}_{\cB_m^l}\circ \pi_k^l = \pi_m^l \circ T_l^{k-m}.
\]
Fix a non-principal ultrafilter $\omega$ on $\mathbb{N}_{\geq 1}$ and let $\cB = \prod_{l, \omega} (\cB^l, \tau_l)$ and $\cB_m = \prod_\omega (\cB^l_m, \tau_l)$ be the Ocneanu ultrapowers, see \cite{AndoHaagerup}.
Let $\mathbb{E}_m: \cB \rightarrow \cB_m$ be the expectation preserving the ultraproduct trace $\tau_{\cB}$ on $\cB$. We have that $\{\cB_m\}_{m\geq0}$ is an increasing filtration of subalgebras of $\cB$  and that
\[
\mathbb{E}_m((x_l)_{l, \omega}) =(\mathbb{E}_m^l(x_l))_{l, \omega}.
\]
For every $m\geq0,$ define a trace-preserving $\ast$-homomorphism $\pi_m:\cM\to\cB_m$ by setting
$$\pi_m: \cM \rightarrow \cB_m: x \mapsto (\pi_m^l(x))_{l, \omega}.$$
We find that for $x \in \cM$, $k,m \in \mathbb{N}_{\geq 0}$ and $k \geq m$,
\begin{equation}\label{Eqn=MarkovEq}
(\mathbb{E}_m \circ \pi_k)(x) = \Big((\mathbb{E}_m^l \circ \pi_k^l)(x)\Big)_{l, \omega}=\Big((\pi_m^l\circ T_l^{k-m})(x)\Big)_{l, \omega}.
\end{equation}
Since $\pi_m^l$ is trace preserving, it follows that
\[
\Big\Vert \Big((\pi_m^l\circ T_l^{k-m})(x)\Big)_{l, \omega}-\Big((\pi_m^l\circ T^{k-m})(x)\Big)_{l, \omega}\Big\Vert_2=\lim_{l\to\omega}\Big\Vert T_l^{k-m}(x)-T^{k-m}(x)\Big\Vert_2.
\]
By the triangle inequality, we have
\[
\begin{split}
& \Big\Vert T_l^{k-m}(x)-T^{k-m}(x)\Big\Vert_2\leq\sum_{j=0}^{k-m-1}\Big\Vert\Big(T_l^{k-m-j-1}\circ (T_l-T)\circ T^j\Big)(x)\Big\Vert_2\\
\leq & \sum_{j=0}^{k-m-1}\|T_l\|_{L_2\to L_2}^{k-m-1-j}\Big\Vert (T_l-T)(T^jx)\Big\Vert_2\leq\sum_{j=0}^{k-m-1}\Big\Vert (T_l-T)(T^jx)\Big\Vert_2.
\end{split}
\]
Therefore,
\[
\begin{split}
&\Big\Vert \Big((\pi_m^l\circ T_l^{k-m})(x)\Big)_{l, \omega}-\Big((\pi_m^l\circ T^{k-m})(x)\Big)_{l, \omega}\Big\Vert_2\\
\leq & (k-m)\max_{0\leq j<k-m}\limsup_{l\to\infty}\Big\Vert (T_l-T)(T^jx)\Big\Vert_2.
\end{split}
\]
By assumption, we have that
$$(T_l-T)(T^jx)\to0,\quad l\to\infty,$$
 in $L_2$-norm.  We conclude that
$$\Big((\pi_m^l\circ T_l^{k-m})(x)\Big)_{l, \omega}-\Big((\pi_m^l\circ T^{k-m})(x)\Big)_{l, \omega}=0.$$
Substituting into \eqref{Eqn=MarkovEq}, we obtain
$$(\mathbb{E}_m \circ \pi_k)(x) = \Big((\pi_m^l\circ T^{k-m})(x)\Big)_{l, \omega}=\pi_m(T^{k-m}(x)).$$
\end{proof}

\begin{proof}[Proof of Theorem \ref{Thm=DOIMarkovDilation}] Fix $l\in\mathbb{N}_{\geq1}$ and $\varepsilon > 0$.  The matrix $(\phi_{\epsilon}(\frac{i}{l},\frac{j}{l}))_{i,j\in\mathbb{Z}}$ and the operator
	\[
	\lfloor lA\rfloor := \sum_{i \in \mathbb{Z}}   i  \chi_{[i, i+1)}( l A)
	\]
satisfy the condition of Proposition \ref{Thm=RicardDilation}. Hence, the semi-group $((\cI_{\phi_{\epsilon}}^{\frac1l\lfloor lA\rfloor})^n)_{n\in\mathbb{N}_{\geq0}}$ admits a standard Markov dilation. Since $\phi_{\epsilon}$ is continuous, it follows that
$$\cI_{\phi_{\epsilon}}^{\frac1l\lfloor lA\rfloor}(x)\to \cI_{\phi_{\epsilon}}^A(x),\quad l\to\infty,$$
in $L_2$. By Proposition \ref{markov approximation prop}, the semi-group $((\cI_{\phi_{\epsilon}}^A)^n)_{n\in\mathbb{N}_{\geq0}}$ admits a standard Markov dilation. By Theorem \ref{discretization theorem}  so does the semi-group $(\cI_{\phi_t}^A)_{t\geq0}.$
\end{proof}

\section{Complements on reversed Markov dilations}\label{Sect=ReversedDilateFix}

As we believe these results are of independent use, we also state the corresponding results for reversed Markov dilations. These shall not be used in the current paper. The proofs are completely analogous to the proofs in Sections \ref{Sect=Disc} and \ref{Sect=DOIDilate}.

\begin{dfn}
Let $\mathcal{T} = (T_t)_{t \geq 0}$ be a Markov semi-group on a finite von Neumann algebra $(\cM, \tau)$. Let $\sfG$ be a subsemi-group of $\mathbb{R}_{\geq 0}$. We say that $(T_t)_{t \in \sfG}$ admits  a  reversed Markov dilation if there exist:
\begin{enumerate} [{\rm (i)}]
\item a   finite von Neumann algebra $(\mathcal{B}, \tau_{\cB})$;
\item a decreasing filtration $\mathcal{B}_s, s \geq 0$ with conditional expectations $\mathbb{E}_{\cB_s}: \mathcal{B} \rightarrow \mathcal{B}_s$;
\item  trace preserving $\ast$-homomorphisms $\pi_s: \cM \rightarrow \mathcal{B}_s$
\end{enumerate}
such that the following property holds
\[\mathbb{E}_{\cB_t} \circ \pi_s = \pi_t \circ T_{t-s}, \qquad s,t \in \sfG, t \geq s.\]
\end{dfn}

\begin{dfn}
We say that a reversed Markov dilation has continuous path if for every $x \in \cM$ the mapping $\mathbb{R}_{\geq 0} \rightarrow \cB: t\to \pi_t(x)$  is  continuous in measure.
\end{dfn}
	
In the same way as we proved Theorem \ref{discretization theorem}  we may now obtain the following result.

\begin{thm}  Let $\mathcal{T}=(T_t)_{t\geq0}$ be a Markov semi-group. If $(T_t)_{t\in\epsilon\mathbb{N}_{\geq 0}}$ admits a reversed Markov dilation for every $\epsilon>0,$ then so does $\mathcal{T}.$ Moreover, the dilation has continuous path.
\end{thm}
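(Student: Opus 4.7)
The plan is to replay the three-step argument of Section \ref{Sect=Disc} with standard Markov dilations replaced throughout by reversed ones; the bulk of the work is notational, but care is needed with the now-flipped direction of the filtration.

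First, I would prove a binary-rational analogue of Lemma \ref{binary lemma}. Setting $\sfG_l = 2^{-l}\mathbb{N}_{\geq 0}$ and $\sfG = \bigcup_l \sfG_l$, the hypothesis (applied with $\epsilon = 2^{-l}$) supplies reversed dilations $(\cB^l, (\cB^l_m)_{m\in\sfG_l}, (\pi^l_m)_{m\in\sfG_l})$. Form the Ocneanu ultrapowers $\cB = \prod_{l,\omega}(\cB^l,\tau_{\cB^l})$ and $\cB_m = \prod_{l,\omega}(\cB^l_m, \tau_{\cB^l})$ for $m\in\sfG$, together with the $\ast$-homomorphisms $\pi_m(x) = (\pi^l_m(x))_{l,\omega}$. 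Since each filtration $(\cB^l_m)_m$ is \emph{decreasing} in $m$, so is $(\cB_m)_{m\in\sfG}$, and the reversed Markov identity $\mathbb{E}_{\cB_m}\circ\pi_k = \pi_m\circ T_{m-k}$ for $k\leq m$ in $\sfG$ transfers through the ultrapower exactly as in the standard case.

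Second, the $L_2$-continuity bound of Lemma \ref{first l2 estimate} carries over essentially verbatim. For $s\geq t$ in $\sfG$, the reversed filtration gives $\pi_s(x)\in\cB_s\subset\cB_t$; combined with $\mathbb{E}_{\cB_s}\circ\pi_t = \pi_s\circ T_{s-t}$ this yields
\[
\tau_{\cB}(\pi_t(x)^{\ast}\pi_s(x)) = \tau_{\cB}(\mathbb{E}_{\cB_s}(\pi_t(x))^{\ast}\pi_s(x)) = \tau(T_{s-t}(x)^{\ast} x),
\]
so that $\|\pi_t(x)-\pi_s(x)\|_2^2 \leq 2\|x\|_2\,\|x - T_{|s-t|}(x)\|_2$, and hence $t\mapsto\pi_t(x)$ is continuous in measure on $\sfG$.

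Third, to extend from $\sfG$ to $\mathbb{R}_{\geq 0}$ I would define, in contrast to the standard case,
\[
\cB_t \;:=\; \bigcap_{\substack{u\geq t\\ u\in\sfG}}\cB_u,
\]
a decreasing filtration of $\cB$ indexed by $t\geq 0$, and extend each $\pi_t$ using the measure-continuous family produced in step two. Reverse martingale convergence in $L_2$ gives $\mathbb{E}_{\cB_t}(w) = \lim_{u\to t^+,\,u\in\sfG}\mathbb{E}_{\cB_u}(w)$ for every $w\in\cB$. For $s\in\sfG$ with $s\leq t$, this delivers
\[
\mathbb{E}_{\cB_t}(\pi_s(x)) = \lim_{\substack{u\to t^+\\ u\in\sfG}}\mathbb{E}_{\cB_u}(\pi_s(x)) = \lim_{\substack{u\to t^+\\ u\in\sfG}}\pi_u(T_{u-s}(x)),
\]
and since $T_{u-s}(x) = T_{u-t}(T_{t-s}(x))\to T_{t-s}(x)$ in $L_2$ while $\pi_u\to\pi_t$ on bounded sets in $L_2$, the limit equals $\pi_t(T_{t-s}(x))$. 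The case $s\notin\sfG$ is handled by taking $s_k\in\sfG$ with $s_k\searrow s$, using $L_2$-contractivity of $\mathbb{E}_{\cB_t}$, just as in the concluding step of the proof of Theorem \ref{discretization theorem}.

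The main technical point is the flipped direction of the limits: where the standard proof uses martingale convergence from below ($\cB_u\nearrow\cB_t$) together with $T_{s-u}(x) = T_{s-t}(T_{t-u}(x))$ as $u\nearrow t$, the reversed version requires $L_2$-reverse martingale convergence from above ($\cB_u\searrow\cB_t$) and the identity $T_{u-s}(x) = T_{u-t}(T_{t-s}(x))$ as $u\searrow t$. Once one checks that $\bigcap_{u\geq t,\,u\in\sfG}\cB_u$ really realises the $L_2$-limit of the projections $\mathbb{E}_{\cB_u}$, the rest is a line-by-line translation of the arguments in Section \ref{Sect=Disc}.
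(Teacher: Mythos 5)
Your first two steps are correct and match what one would expect: the ultrapower construction transfers the reversed dilations through the binary rationals with the decreasing filtration intact, and the $L_2$-continuity bound from Lemma \ref{first l2 estimate} carries over (your recomputation of $\tau_{\cB}(\pi_t(x)^*\pi_s(x))$ using $\pi_s(x)\in\cB_s$ and $\mathbb{E}_{\cB_s}\circ\pi_t=\pi_s\circ T_{s-t}$ is exactly right).

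The third step contains a genuine error. You define
$\cB_t := \bigcap_{u\geq t,\,u\in\sfG}\cB_u$
and claim this is a decreasing filtration approximated by $\cB_u\searrow\cB_t$ as $u\searrow t$. Neither claim is correct. First, because the original filtration $(\cB_u)_{u\in\sfG}$ is \emph{decreasing}, for any $t_1<t_2$ and $u\in\sfG\cap[t_1,t_2)$ one has $\cB_u\supseteq\cB_{u'}$ for any $u'\geq t_2$, so intersecting over $u\geq t_1$ gives the same algebra as intersecting over $u\geq t_2$; your family $\cB_t$ is therefore \emph{constant in $t$} (it is the tail algebra $\bigcap_{u\in\sfG}\cB_u$). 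In particular it cannot contain $\pi_t(\cM)$: one can check directly that for $v>t$ in $\sfG$, $\mathbb{E}_{\cB_v}(\pi_t(x))=\pi_v(T_{v-t}(x))\neq\pi_t(x)$ unless $T_{v-t}$ is isometric on $L_2$. Second, as $u\searrow t$ the algebras $\cB_u$ \emph{increase} (smaller index in a decreasing filtration means bigger algebra), so one gets \emph{forward} martingale convergence of $\mathbb{E}_{\cB_u}$ to the conditional expectation onto $\bigl(\bigcup_{u>t}\cB_u\bigr)''$, not reverse martingale convergence to the intersection.

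The correct extension is one of the following two equivalent-in-effect choices. Either take $\cB_t := \bigcap_{u\leq t,\,u\in\sfG}\cB_u$; then $\cB_t$ is decreasing in $t$, agrees with the original $\cB_t$ when $t\in\sfG$, and reverse martingale convergence applies as $u\nearrow t$ (since then $\cB_u\searrow\cB_t$). One verifies $\pi_t(x)\in\cB_t$ by approximating $t$ from \emph{above}: for $u>t$ one has $\cB_u\subseteq\cB_{u'}$ for every $u'\leq t$ in $\sfG$, hence $\cB_u\subseteq\cB_t$, and the bounded $L_2$-limit $\pi_t(x)=\lim_{u\searrow t}\pi_u(x)$ lies in $\cB_t$. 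Alternatively take $\cB_t := \bigl(\bigcup_{u\geq t,\,u\in\sfG}\cB_u\bigr)''$ (which again agrees with the original for $t\in\sfG$ and is decreasing) and use forward martingale convergence as $u\searrow t$. In either case the dilation identity $\mathbb{E}_{\cB_t}\circ\pi_s=\pi_t\circ T_{t-s}$ then follows by the same two-stage limiting argument you sketch, first over $u\to t$ in $\sfG$ with $s\in\sfG$ fixed, then over $s_k\to s$. Once the definition of $\cB_t$ and the direction of the martingale limit are corrected, the rest of your proposal is a faithful line-by-line translation of the argument for Theorem \ref{discretization theorem}.
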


In \cite[p. 4370]{RicardDilation} Ricard shows that a semi-group of Schur multipliers $(T_\phi^k)_{k \in \mathbb{N}_{\geq 0}}$ admits a reversed Markov dilation. By essentially the same argument as in Theorem \ref{Thm=DOIMarkovDilation} we also get reversed Markov dilations for double operator integrals.

\begin{thm}\label{Thm=DOIStandardMarkovDilationFix}  Let $(\cM,\tau)$ be a finite von Neumann algebra and let $A=A^*\in\mathcal{M}.$ Let $(\cI_{\phi_t}^A)_{t\geq0}$ be a Markov semi-group of double operator integrals. If each $\phi_t$ is continuous, then this semi-group admits a reversed Markov dilation.
\end{thm}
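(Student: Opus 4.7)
The plan is to follow the same three-step scheme used for standard Markov dilations in the proof of Theorem \ref{Thm=DOIMarkovDilation}: a discrete integer-spectrum construction, an approximation step, and a discretization step.

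First, I would establish the reversed analog of Proposition \ref{Thm=RicardDilation}: for $A=A^*$ with spectrum in $\{-n,\ldots,n\}$ and a positive real kernel $\phi$ on $\{-n,\ldots,n\}^2$ with $\phi(i,i)=1$, the discrete semi-group $((\cI_\phi^A)^k)_{k\in\mathbb{N}_{\geq 0}}$ admits a reversed Markov dilation. The Clifford/exterior-algebra construction of Proposition \ref{Thm=RicardDilation} transfers essentially verbatim, with the only change that the filtration $(\cB_m)_m$ is replaced by a decreasing tail of the infinite tensor product $\Gamma^{\otimes \infty}$ rather than an increasing head (alternatively, one reverses the direction of the tensor shift $S$). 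This mirrors Ricard's treatment of reversed Markov dilations for Schur multipliers on p.~4370 of \cite{RicardDilation}, and the double operator integral case reduces to the Schur multiplier case on the spectral subspaces $p_i \cM p_j$. A direct computation paralleling the one in Proposition \ref{Thm=RicardDilation}, using the identity $\tau_\Omega(s(e_i)s(e_j)) = \phi(i,j)$, verifies the intertwining relation $\mathbb{E}_{\cB_t} \circ \pi_s = \pi_t \circ T_{t-s}$ for $t \geq s$ in the reversed sense.

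Next, I would prove the reversed analog of Proposition \ref{markov approximation prop}: if $T_l \to T$ strongly on $\cM$ with $T_l,T$ unital trace preserving and every $(T_l^n)_{n\geq 0}$ admits a reversed Markov dilation, then so does $(T^n)_{n\geq 0}$. The proof is identical to Proposition \ref{markov approximation prop}: one forms the Ocneanu ultrapower $\cB=\prod_{l,\omega}\cB^l$, takes $\cB_m=\prod_{l,\omega}\cB_m^l$ (still a decreasing filtration), defines $\pi_m(x) = (\pi_m^l(x))_{l,\omega}$, and uses the telescoping estimate on $\|T_l^{k-m}(x)-T^{k-m}(x)\|_2$ together with trace-preservation of each $\pi_m^l$ to pass the intertwining property to the ultralimit. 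Applying this with $T=\cI_{\phi_\epsilon}^A$ and $T_l=\cI_{\phi_\epsilon}^{l^{-1}\lfloor lA\rfloor}$, whose strong $L_2$-convergence is guaranteed by Lemma \ref{Lem=ConvDOI}, yields a reversed Markov dilation of $((\cI_{\phi_\epsilon}^A)^n)_{n\in\mathbb{N}_{\geq 0}}$ for every $\epsilon>0$.

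Finally, I would invoke the reversed discretization theorem stated just before Theorem \ref{Thm=DOIStandardMarkovDilationFix}: since $(\cI_{\phi_t}^A)_{t\in \epsilon\mathbb{N}_{\geq 0}}$ admits a reversed Markov dilation for every $\epsilon>0$, the continuous semi-group $(\cI_{\phi_t}^A)_{t\geq 0}$ does too, with continuous path.

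The main obstacle is the first step, namely setting up the correct reversed filtration in the exterior-algebra construction so that the algebraic identities $\mathbb{E}_{\cB_t}\circ \pi_s = \pi_t \circ T_{t-s}$ hold on the nose for the decreasing filtration. However, since Ricard has carried out the analogous adjustment for Schur multipliers and the double operator integral case reduces to that one through the spectral decomposition of $A$, no essentially new ideas are required; the remaining steps are mechanical adaptations of material already developed in Sections \ref{Sect=Disc} and \ref{Sect=DOIDilate}.
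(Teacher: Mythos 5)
Your proposal follows exactly the paper's intended argument: the paper explicitly states that Theorem \ref{Thm=DOIStandardMarkovDilationFix} is proved "by essentially the same argument as in Theorem \ref{Thm=DOIMarkovDilation}", citing Ricard's reversed dilation for Schur multipliers on p.~4370 of \cite{RicardDilation}, and then invoking the reversed analogs of Proposition \ref{markov approximation prop} and the reversed discretization theorem stated just before. Your three-step scheme (reversed Clifford construction, ultraproduct approximation, reversed discretization) is precisely this, so the proposal matches the paper's approach.
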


\section{Transference of multipliers and BMO-spaces}\label{Sect=ComBMO}
Fix a Lipschitz function $f: \mathbb{R} \rightarrow \mathbb{R}$ and assume $\Vert f' \Vert_\infty \leq 1$.
For $f: \mathbb{R} \rightarrow \mathbb{R}$ we set the divided difference function $f^{[1]}: \mathbb{R}^2 \rightarrow \mathbb{R}$ by
\[
f^{[1]}(\lambda, \mu) =
\left\{
\begin{array}{ll}
\frac{f(\lambda) - f(\mu)}{ \lambda - \mu}, & \lambda \not = \mu, \\
0, & \lambda = \mu.
\end{array}
\right.
\]
The main result we prove in this section is the following theorem.

\begin{thm}\label{bmo boundedness} Let $(\cM, \tau)$ be a finite von Neumann algebra and let $f:\mathbb{R}\to\mathbb{R}$ be Lipschitz with $\Vert f ' \Vert_\infty \leq 1$. For every $A=A^*\in\mathcal{M},$
\begin{enumerate}[{\rm (i)}]
\item\label{bmoi} the semi-group $\mathcal{I}^A = ( \cI_{e^{-tF}}^A )_{t\geq0}$ with
$$F(\lambda,\mu)=|\lambda-\mu|^2+|f(\lambda)-f(\mu)|^2,\quad \lambda,\mu\in\mathbb{R},$$
is Markov;
\item\label{bmoii} the operator $\cI_{f^{[1]}}^A$ maps $\mathcal{M}$ to $\BMO(\cM,\cI^A)$ and its norm is bounded by an absolute constant $c_{abs}$.
\end{enumerate}
\end{thm}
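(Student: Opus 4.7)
The plan is to treat the two parts separately. Part (i) is proved by standard positive-definiteness arguments together with the approximation lemmas in the preliminaries. Part (ii), which is the heart of the matter, is proved by transferring the problem to a Euclidean Fourier-multiplier problem, where Proposition \ref{Prop=CBMulti} produces an absolute-constant bound.

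For (i), the key observation is the factorisation $F(\lambda,\mu) = \|g(\lambda) - g(\mu)\|_2^2$ with $g(\lambda) = (\lambda, f(\lambda)) \in \mathbb{R}^2$. Being a squared Euclidean distance, $F$ is conditionally negative definite on $\mathbb{R}$, so by Schoenberg's theorem $e^{-tF}$ is a positive definite kernel for every $t \geq 0$. When $A$ has finite spectrum, $\cI^A_{e^{-tF}}$ is Schur multiplication by a positive semi-definite matrix and is therefore completely positive; for general $A$ one passes via Lemma \ref{Lem=ConvDOI} and Lemma \ref{approximate markov fact}. Unitality is immediate from $F(\lambda,\lambda) = 0$, trace preservation follows from the symmetry $F(\lambda,\mu) = F(\mu,\lambda)$, the semi-group law from $e^{-tF}e^{-sF} = e^{-(t+s)F}$, and continuity in measure from pointwise continuity of $t \mapsto e^{-tF(\lambda,\mu)}$ together with $L_2$-continuity of the corresponding DOIs via Lemma \ref{Lem=ConvDOI}.

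For (ii) I would first reduce to the case where $A$ has finite spectrum by Lemma \ref{Lem=ConvDOI} and an approximation argument. The transference device is as follows: for $y \in \mathbb{R}^2$ set $U_y = e^{iy_1 A + iy_2 f(A)}$, which is unitary because $A$ and $f(A)$ commute, put $\alpha_y(x) = U_y x U_y^*$, and define $\Phi_x \in L_\infty(\mathbb{R}^2,\cM)$ by $\Phi_x(y) = \alpha_y(x)$. When $A$ has finite spectrum $\{\lambda_i\}$ with spectral projections $p_i$ one has the explicit expansion $\Phi_x(y) = \sum_{i,j} e^{iy\cdot(g(\lambda_i) - g(\lambda_j))} p_i x p_j$, so $\Phi_x$ is trigonometric in the sense of Proposition \ref{Prop=CBMulti}. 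Two direct spectral computations, using that $\alpha_y$ is a $*$-automorphism of $\cM$ and commutes with convolution, then give the transference identities
\[
(\id_\cM \otimes e^{t\Delta})(\Phi_x)(y) = \alpha_y\bigl(\cI^A_{e^{-tF}}(x)\bigr), \qquad (m(\nabla) \otimes \id_\cM)(\Phi_x) = \Phi_{\cI^A_\psi(x)},
\]
where $\psi(\lambda,\mu) := m(g(\lambda) - g(\mu))$ for any smooth homogeneous symbol $m$. These identities immediately yield $\|\Phi_x\|_\infty = \|x\|_\infty$ and $\|\Phi_x\|_{\BMO(\mathbb{R}^2,\cS^{\otimes \cM})} = \|x\|_{\BMO(\cM,\cI^A)}$.

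It remains to choose $m$ so that $\psi = f^{[1]}$. Since $g(\lambda) - g(\mu) = (\lambda-\mu, f(\lambda)-f(\mu))$, this forces $m(\xi_1,\xi_2) = \xi_2/\xi_1$, which is singular along $\xi_1 = 0$. Here the hypothesis $\|f'\|_\infty \leq 1$ is decisive: it forces $g(\lambda)-g(\mu)$ to lie in the cone $\{|\xi_2| \leq |\xi_1|\}$ on which $\xi_2/\xi_1$ is smooth. Fixing once and for all a smooth cut-off $\chi : \mathbb{R} \to [0,1]$ with $\chi \equiv 1$ on $[-1,1]$ and $\chi \equiv 0$ outside $[-2,2]$, I would set
\[
m(\xi_1,\xi_2) := \chi(\xi_2/\xi_1)\cdot \xi_2/\xi_1 \text{ for } \xi_1 \neq 0, \qquad m(0,\xi_2) := 0.
\]
A routine check shows that $m$ is smooth and homogeneous of degree $0$ on $\mathbb{R}^2 \setminus \{0\}$ and agrees with $\xi_2/\xi_1$ on the cone $\{|\xi_2| \leq |\xi_1|\}$, so $\psi = f^{[1]}$. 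Proposition \ref{Prop=CBMulti} applied to the trigonometric function $\Phi_x$ then gives
\[
\|\cI^A_{f^{[1]}}(x)\|_{\BMO(\cM,\cI^A)} = \|(m(\nabla) \otimes \id_\cM)(\Phi_x)\|_{\BMO(\mathbb{R}^2,\cS^{\otimes \cM})} \leq c_m \|\Phi_x\|_\infty = c_m \|x\|_\infty,
\]
with $c_m$ the absolute constant produced by the proposition. The principal technical obstacle is the precise verification of the $\BMO$ isometry between $\Phi_x$ and $x$: although the spectral computation is elementary for finite-spectrum $A$, the function $\Phi_x$ is almost periodic rather than $L_2$-integrable on $\mathbb{R}^2$, so one must work within the trigonometric $*$-algebra of Proposition \ref{Prop=CBMulti} and then pass to general $A$ via Lemma \ref{Lem=ConvDOI} and lower semi-continuity of the BMO norm under $L_2$-convergence.
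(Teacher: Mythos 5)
Your transference device is precisely the one used in the paper. Writing $U_y = e^{iy_1 A + i y_2 f(A)}$ and $\Phi_x(y) = U_y x U_y^\ast$ is the same as the paper's $\varphi_A(x) = U^A(1\otimes x)(U^A)^\ast$, where $U^A = \int e_{(\lambda, f(\lambda))} \otimes dE_A(\lambda)$; the two intertwining identities you compute are exactly Lemma~\ref{transference lemma}; and your cut-off multiplier $m(\xi_1,\xi_2) = \chi(\xi_2/\xi_1)\cdot\xi_2/\xi_1$ plays the same role as the paper's $m_0$. The application of Proposition~\ref{Prop=CBMulti} to the trigonometric function $\Phi_x$ is also the paper's argument. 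So the core of part~(ii) coincides with the published proof.

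Two small points. For part~(i) you argue directly via Schoenberg's theorem and positive-definiteness of Schur multipliers, whereas the paper simply observes that the first transference identity exhibits $\cI^A_{e^{-tF}}$ as a compression of the Heat semi-group through the $*$-monomorphism $\varphi_A$, so Markovianity is inherited. Both arguments are valid and essentially the same length; yours is perhaps slightly more elementary in that it does not presuppose the transference identity at that stage, while the paper's kills both parts with a single lemma. For the final passage from finite spectrum to general $A$, your invocation of ``lower semi-continuity of the BMO norm under $L_2$-convergence'' is not quite the right framing, because the semi-group $\cI^{A_l}$ itself changes with $l$ — you are not taking a liminf of norms in a fixed BMO space. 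What the paper actually does is explicit: it forms the operators
\[
B_l(t)=\cI^{A_l}_{e^{-tF}}\Big(\cI_{f^{[1]}}^{A_l}(x)^{\ast}\cI_{f^{[1]}}^{A_l}(x)\Big)-\cI^{A_l}_{e^{-tF}}\Big(\cI_{f^{[1]}}^{A_l}(x)\Big)^{\ast}\cI^{A_l}_{e^{-tF}}\Big(\cI_{f^{[1]}}^{A_l}(x)\Big),
\]
deduces the two-sided bound $-c_{\rm abs}^2\|x\|_\infty^2\leq B_l(t)\leq c_{\rm abs}^2\|x\|_\infty^2$ from the finite-spectrum estimate, and then proves via an $L_1$-contractivity argument (Lemma~\ref{5 convergence lemma}) that $B_l(t)\to B(t)$ in measure, so the sandwich passes to the limit. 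Note that $\cI_{f^{[1]}}^{A_l}(x)^\ast \cI_{f^{[1]}}^{A_l}(x)$ lives only in $L_1$, not in $L_2$, so one cannot argue purely with $L_2$-convergence as your sketch suggests; the $L_1$-step is essential. Finally, your proposal omits the verification that $\cI_{f^{[1]}}^A(x)\in L_2^\circ(\cM)$ (the paper's Lemma~\ref{l2circ lemma}), which is needed because the BMO space is the completion of elements of $\cM^\circ$; without it the finite BMO-norm bound does not by itself place the image in the right space. These gaps are technical rather than conceptual — you correctly identified the approximation step as the obstacle — but as written the final paragraph does not constitute a proof of it.
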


For $\eta\in \mathbb{R}^2,$ let $e_{\eta}\in L_\infty(\mathbb{R}^2)$ be defined as
\[
e_{\eta}(\xi) = e^{i \langle \xi, \eta\rangle}.
\]
For $A=A^*\in \cM$ with finite spectrum, define a unitary element $U^A\in L_{\infty}(\mathbb{R}^2)\otimes\cM$ by setting
$$U^A=\int_{\mathbb{R}}e_{(\lambda,f(\lambda))}\otimes dE_A(\lambda).$$
where $\{E_A(\lambda)\}_{\lambda\in\mathbb{R}}$ is a spectral family of $A.$ Due to the finiteness assumption on the spectrum, the convergence of the integral follows automatically (in fact, integral is a finite sum of operators).

Define the $*$-monomorphism $\varphi_A:\mathcal{M}\to L_{\infty}(\mathbb{R}^2)\otimes\cM$ by setting
\[\varphi_A(x)=U^A(1\otimes x)(U^A)^{\ast},\quad x\in\cM.\]
Let $m_0$ be a smooth homogeneous multiplier such that
\[m_0(\xi_1,\xi_2)=\frac{\xi_2}{\xi_1}\mbox{ when }|\xi_2|\leq|\xi_1|.\]

Both statements of Theorem \ref{bmo boundedness} are proved through the following transference lemma.

\begin{lem}\label{transference lemma} If, in the setting of Theorem \ref{bmo boundedness}, $A$ has finite spectrum, then
\[\varphi_A\circ\cI_{e^{-tF}}^A=(e^{t \Delta} \otimes{\rm id}_{\mathcal{M}})\circ \varphi_A,\quad \varphi_A\circ\cI_{f^{[1]}}^A=(m_0(\nabla)\otimes {\rm id}_{\mathcal{M}})\circ\varphi_A.\]
\end{lem}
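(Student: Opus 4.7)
Since $A$ has finite spectrum, we may write $A = \sum_\lambda \lambda p_\lambda$ with $p_\lambda = E_A(\{\lambda\})$ and the sum is finite. Consequently
\[
U^A = \sum_\lambda e_{(\lambda, f(\lambda))} \otimes p_\lambda,
\]
and using the identity $e_\eta \,\overline{e_{\eta'}} = e_{\eta - \eta'}$, a direct expansion gives the closed form
\[
\varphi_A(x) = \sum_{\lambda,\mu} e_{(\lambda - \mu,\, f(\lambda) - f(\mu))} \otimes p_\lambda x p_\mu, \qquad x \in \cM.
\]
This is the key identity from which both formulas will follow.

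For the first identity, I observe that the characters $e_\eta$ diagonalise the Heat semi-group: since $\nabla e_\eta = \eta\, e_\eta$, we have $\Delta e_\eta = -\|\eta\|_2^2\, e_\eta$, hence $e^{t\Delta} e_\eta = e^{-t\|\eta\|_2^2}\, e_\eta$. Applying $e^{t \Delta}\otimes \id_{\cM}$ to the expansion of $\varphi_A(x)$ and reading off the coefficient at $(\lambda,\mu)$ picks up the factor
\[
e^{-t(|\lambda-\mu|^2 + |f(\lambda)-f(\mu)|^2)} = e^{-tF(\lambda,\mu)}.
\]
On the other hand, by the discrete-spectrum formula for double operator integrals recalled in Section \ref{Sect=DOI}, we have $p_\lambda \cI_{e^{-tF}}^A(x) p_\mu = e^{-tF(\lambda,\mu)} p_\lambda x p_\mu$, so inserting this into the expansion of $\varphi_A$ applied to $\cI_{e^{-tF}}^A(x)$ produces exactly the same sum. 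The two expressions therefore coincide.

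For the second identity, the same spectral calculus shows that $(m_0(\nabla) \otimes \id_{\cM})$ acts on the character $e_{(\lambda-\mu,\,f(\lambda)-f(\mu))}$ as multiplication by $m_0(\lambda-\mu,\,f(\lambda)-f(\mu))$. The assumption $\|f'\|_\infty \leq 1$ yields $|f(\lambda)-f(\mu)| \leq |\lambda-\mu|$, which places $(\lambda-\mu, f(\lambda)-f(\mu))$ precisely in the region $|\xi_2|\leq|\xi_1|$ where $m_0$ is defined by the explicit formula $\xi_2/\xi_1$. Hence for $\lambda\neq\mu$ the multiplier equals $f^{[1]}(\lambda,\mu)$, while for $\lambda=\mu$ both sides vanish by the convention $m_0(0)=0$ and $f^{[1]}(\lambda,\lambda)=0$. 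Combining this with the expansion of $\varphi_A$ applied to $\cI^A_{f^{[1]}}(x)$ (again using the discrete-spectrum formula) yields the desired identity.

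There is no real obstacle beyond organising the bookkeeping: the only subtlety is to treat the diagonal $\lambda=\mu$ correctly and to note that the region $|\xi_2|\leq |\xi_1|$ in which $m_0$ coincides with $\xi_2/\xi_1$ is forced by the Lipschitz assumption on $f$, which is exactly why the homogeneous symbol $m_0$ can represent the divided difference on the relevant set.
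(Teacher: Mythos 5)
Your proof is correct and follows essentially the same route as the paper: expand $\varphi_A(x)$ as a (finite) sum of characters $e_{(\lambda-\mu,\,f(\lambda)-f(\mu))}$ weighted by $p_\lambda x p_\mu$, then use that these characters are eigenfunctions of $e^{t\Delta}$ and of $m_0(\nabla)$, together with the discrete-spectrum formula for double operator integrals. The two points you make explicit — that $\|f'\|_\infty\leq 1$ forces the argument into the cone $|\xi_2|\leq|\xi_1|$ where $m_0=\xi_2/\xi_1$, and the treatment of the diagonal $\lambda=\mu$ — are the ingredients the paper leaves implicit in the line $(m_0(\nabla))(e_{(\lambda-\mu,f(\lambda)-f(\mu))})=f^{[1]}(\lambda,\mu)\,e_{(\lambda-\mu,f(\lambda)-f(\mu))}$, so this is a faithful (and slightly more detailed) rendering of the same argument.
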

\begin{proof} By definition of $\varphi_A,$ we have
$$\varphi_A(x)=\iint_{\mathbb{R}^2}e_{(\lambda-\mu,f(\lambda)-f(\mu))}\otimes dE_A(\lambda)xdE_A(\mu).$$
Clearly,
\[
\begin{split}
e^{t\Delta}(e_{(\lambda-\mu,f(\lambda)-f(\mu))})=&     e^{-tF(\lambda,\mu)}e_{(\lambda-\mu,f(\lambda)-f(\mu))}\\
(m_0(\nabla))(e_{(\lambda-\mu,f(\lambda)-f(\mu))})=&f^{[1]}(\lambda,\mu)e_{(\lambda-\mu,f(\lambda)-f(\mu))}.
\end{split}
\]
Therefore,
\[\begin{split}
& (e^{t \Delta} \otimes{\rm id}_{\mathcal{M}})(\varphi_A(x))=\iint_{\mathbb{R}^2}e^{-tF(\lambda,\mu)}e_{(\lambda-\mu,f(\lambda)-f(\mu))}\otimes dE_A(\lambda)xdE_A(\mu) \\
= & \iint_{\mathbb{R}^2}e_{(\lambda-\mu,f(\lambda)-f(\mu))}\otimes dE_A(\lambda)(\cI_{e^{-tF}}^A(x))dE_A(\mu)=\varphi_A(\cI_{e^{-tF}}^A(x))
\end{split}
\]
and
\[
\begin{split}
& (m_0(\nabla)\otimes{\rm id}_{\mathcal{M}})(\varphi_A(x))=\iint_{\mathbb{R}^2}f^{[1]}(\lambda,\mu)e_{(\lambda-\mu,f(\lambda)-f(\mu))}\otimes dE_A(\lambda)xdE_A(\mu) \\
= &\iint_{\mathbb{R}^2}e_{(\lambda-\mu,f(\lambda)-f(\mu))}\otimes dE_A(\lambda)(\cI_{f^{[1]}}^A(x))dE_A(\mu)=\varphi_A(\cI_{f^{[1]}}^A(x)).
\end{split}
\]
\end{proof}

Next we prove each of the statements \ref{bmo boundedness} \eqref{bmoi} and \eqref{bmoii} in the following subsections.

\subsection*{Proof of Theorem \ref{bmo boundedness} \eqref{bmoi}}

\begin{lem}\label{approximate markov} Let $(\cM, \tau)$ be a finite von Neumann algebra. Let $G:\mathbb{R}^2\to\mathbb{R}$ be a continuous function. If $( \cI_{e^{-tG}}^A )_{t\geq0}$ is Markov for every $A=A^*\in\cM$ with finite spectrum, then $( \cI_{e^{-tG}}^A )_{t\geq0}$ is Markov for every $A=A^*\in\cM.$
\end{lem}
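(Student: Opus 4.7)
The plan is to approximate $A$ by operators with finite spectrum and to transfer the Markov property to the limit via Lemma~\ref{approximate markov fact}. Since $\cM$ is finite the operator $A$ is bounded, so its discretizations $A_l := l^{-1}\lfloor lA\rfloor$ have finite spectrum and by hypothesis each $\mathcal{T}^l := (\cI_{e^{-tG}}^{A_l})_{t\geq 0}$ is Markov. Writing $T_t := \cI_{e^{-tG}}^A$, the continuity of $e^{-tG}$ combined with Lemma~\ref{Lem=ConvDOI} yields $\cI_{e^{-tG}}^{A_l}(x) \to T_t(x)$ in $L_2(\cM)$, and hence in measure, for every $x \in \cM$ and $t \geq 0$. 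Consequently $T_t$ restricts to a unital completely positive contraction on $\cM$ (as a pointwise $L_2$-limit of such maps whose values lie in the $L_2$-closed unit ball of $\cM$), and the semi-group identity $T_{t+s} = T_tT_s$ passes to the limit by an elementary $\varepsilon/2$ argument based on the $L_2$-contractivity of each $T^l_t$.

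It remains to verify that $\mathcal{T}=(T_t)_{t\geq 0}$ is itself continuous in measure in the time parameter, so that the standing hypothesis of Lemma~\ref{approximate markov fact} is met. For this I would use the elementary bound that, for a bounded Borel symbol $\phi$, the $L_2(\cM)$-operator norm of $\cI_\phi^A$ is at most $\sup_{(\lambda,\mu)\in\sigma(A)\times\sigma(A)}|\phi(\lambda,\mu)|$, a direct consequence of the spectral-measure realization of the DOI on $L_2(\cM)$. Compactness of $\sigma(A)$ together with the joint continuity of $(t,\lambda,\mu)\mapsto e^{-tG(\lambda,\mu)}$ makes this map uniformly continuous on $[0,T]\times\sigma(A)\times\sigma(A)$ for every $T>0$, so $t_n\to t$ forces uniform convergence of the symbols on $\sigma(A)\times\sigma(A)$; consequently $T_{t_n}(x)\to T_t(x)$ in $L_2(\cM)$ for every $x\in L_2(\cM)$ and, in particular, in measure.

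With pointwise-in-$t$ convergence $T^l_t(x)\to T_t(x)$ in measure, each $\mathcal{T}^l$ Markov, and the continuity in measure of $\mathcal{T}$ just established, Lemma~\ref{approximate markov fact} immediately delivers that $\mathcal{T}$ is Markov. The main mild obstacle is the independent verification of time-continuity of the candidate limit semi-group --- Lemma~\ref{approximate markov fact} presupposes it --- which is dispatched above by combining compactness of $\sigma(A)$ with the $L_2$-boundedness of double operator integrals whose symbol is bounded on the spectrum.
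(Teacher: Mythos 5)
Your proposal is correct and follows essentially the same route as the paper: discretize $A$ as $A_l = l^{-1}\lfloor lA\rfloor$, observe via Lemma~\ref{Lem=ConvDOI} that $\cI^{A_l}_{e^{-tG}}(x)\to\cI^A_{e^{-tG}}(x)$ in $L_2$ (hence in measure), and pass the Markov property to the limit. The only organizational difference is that you invoke Lemma~\ref{approximate markov fact} as a black box, whereas the paper re-derives its content directly (verifying complete positivity by tensoring with an arbitrary finite $\cN$ and taking measure limits of elements in $[0,1]$, self-adjointness on $L_2$ by strong limits, and dismissing time-continuity as "obvious"); you also fill in the time-continuity claim with the correct argument from compactness of $\sigma(A)$ and the spectral-measure bound $\|\cI_\phi^A\|_{L_2\to L_2}\leq\sup_{\sigma(A)\times\sigma(A)}|\phi|$. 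Your remarks about passing the semi-group identity to the limit are superfluous (the identity $\cI^A_{e^{-(t+s)G}}=\cI^A_{e^{-tG}}\cI^A_{e^{-sG}}$ holds directly from multiplicativity of the symbol and the spectral-measure realization of DOIs), but this does no harm.
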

\begin{proof} Let $A=A^*\in\mathcal{M}$ and let $A_l=l^{-1}\lfloor lA\rfloor$ for $l\geq 1.$ If $\mathcal{N}$ is a finite von Neumann algebra and if $ x\in\mathcal{N} \otimes\mathcal{M}$ is such that $0\leq x\leq 1,$ then
$$0\leq ({\rm id}_{\mathcal{N}}\otimes\cI_{e^{-tG}}^{A_l})(x)\leq 1.$$
Clearly,
$$({\rm id}_{\mathcal{N}}\otimes\cI_{e^{-tG}}^{A_l})(x)\to ({\rm id}_{\mathcal{N}}\otimes\cI_{e^{-tG}}^A)(x)$$
in $L_2(\mathcal{N} \otimes \mathcal{M})$ and  therefore  in measure. Hence,
$$0\leq ({\rm id}_{\mathcal{N}}\otimes\cI_{e^{-tG}}^A)(x)\leq 1.$$
Thus, $(\cI_{e^{-tG}}^A)_{t\geq0}$ is completely positive. Since $( \cI_{e^{-tG}}^A )_{t\geq0}$ is obviously unital, the condition \eqref{markovi} follows.

By assumption, $\cI_{e^{-tG}}^{A_l}$ is self-adjoint on $L_2(\mathcal{M}).$ Clearly, $\cI_{e^{-tG}}^{A_l}\to \cI_{e^{-tG}}^A$ strongly. Therefore, $\cI_{e^{-tG}}^A$ is self-adjoint on $L_2(\mathcal{M}).$ This yields the condition \eqref{markovii}. The condition \eqref{markoviii} is obvious.
\end{proof}

\begin{proof}[Proof of Theorem \ref{bmo boundedness} \eqref{bmoi}] If $A$ has finite spectrum, then the assertion follows by Lemma \ref{transference lemma} and the fact that the Heat semi-group is Markov. For generic $A,$ the assertion follows by Lemma \ref{approximate markov}.
\end{proof}

\subsection*{Proof of Theorem \ref{bmo boundedness} \eqref{bmoii}} $\quad$ \\

\noindent For $A=A^*\in\mathcal{M},$ let $A_l=l^{-1}\lfloor lA\rfloor$ for $l\geq 1.$

\begin{lem}\label{5 convergence lemma} Let $(\cM, \tau)$ be a finite von Neumann algebra. Let $G:\mathbb{R}^2\to\mathbb{R}$ be a continuous function such that $(\cI_{e^{-tG}}^A)_{t\geq0}$ is Markov for every $A=A^*\in\cM.$ We have as $l \rightarrow \infty$ that,
$$\cI^{A_l}_{e^{-tG}}\Big(\cI_{f^{[1]}}^{A_l}(x)^{\ast}\cI_{f^{[1]}}^{A_l}(x)\Big)\to \cI^A_{e^{-tG}}\Big(\cI_{f^{[1]}}^A(x)^{\ast}\cI_{f^{[1]}}^A(x)\Big), $$
$$\cI^{A_l}_{e^{-tG}}\Big(\cI_{f^{[1]}}^{A_l}(x)\Big)\to \cI^A_{e^{-tG}}\Big(\cI_{f^{[1]}}^A(x)\Big), $$
in measure for every $x\in L_2(\cM).$
\end{lem}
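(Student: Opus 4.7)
My plan is to first establish the $L_2$-convergence $\cI_{f^{[1]}}^{A_l}(x)\to \cI_{f^{[1]}}^A(x)$ for every $x\in L_2(\cM)$ as $l\to\infty,$ and then transport it through the outer operator $\cI_{e^{-tG}}^{A_l}.$ Given this convergence, the products $\cI_{f^{[1]}}^{A_l}(x)^\ast\cI_{f^{[1]}}^{A_l}(x)$ converge in $L_1(\cM),$ and I will close the first displayed convergence via the splitting
$$\cI_{e^{-tG}}^{A_l}(y_l)-\cI_{e^{-tG}}^A(y)=\cI_{e^{-tG}}^{A_l}(y_l-y)+\bigl(\cI_{e^{-tG}}^{A_l}-\cI_{e^{-tG}}^A\bigr)(y),$$
bounding the first summand via the $L_1$-contractivity coming from the Markov property and sending the second to zero in measure via Lemma \ref{Lem=ConvDOI} applied to the continuous symbol $e^{-tG}$ (after approximating $y\in L_1$ by $L_2$ elements). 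The linear (second) identity is handled analogously but is simpler.

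\textbf{Approximation step for the key $L_2$-convergence.} The obstacle to applying Lemma \ref{Lem=ConvDOI} directly is that $f^{[1]}$ is discontinuous on the diagonal. To bypass this I will introduce continuous cutoffs: pick $\chi_k:\mathbb{R}\to[0,1]$ continuous with $\chi_k(0)=0$ and $\chi_k(t)=1$ for $|t|\geq 1/k,$ and define $\phi_k(\lambda,\mu):=f^{[1]}(\lambda,\mu)\,\chi_k(\lambda-\mu).$ Each $\phi_k$ is continuous on $\mathbb{R}^2,$ bounded by $\|f'\|_\infty\leq 1,$ and converges pointwise to $f^{[1]}.$ The triangle inequality then yields
$$\|\cI_{f^{[1]}}^{A_l}(x)-\cI_{f^{[1]}}^A(x)\|_2 \leq \|\cI_{f^{[1]}-\phi_k}^{A_l}(x)\|_2 + \|\cI_{\phi_k}^{A_l}(x)-\cI_{\phi_k}^A(x)\|_2 + \|\cI_{f^{[1]}-\phi_k}^A(x)\|_2.$$
For each fixed $k,$ the middle summand tends to $0$ as $l\to\infty$ by Lemma \ref{Lem=ConvDOI}. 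The third summand tends to $0$ as $k\to\infty$ by dominated convergence in the scalar $L_2$-isometric representation $\|\cI_\psi^A(x)\|_2^2 = \int|\psi|^2\,d\nu_{A,x},$ where $\nu_{A,x}$ is the bimodular spectral measure attached to $A$ and $x.$

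\textbf{The uniform diagonal bound and the main obstacle.} The crux is the uniform-in-$l$ estimate $\sup_l\|\cI_{f^{[1]}-\phi_k}^{A_l}(x)\|_2\to 0$ as $k\to\infty.$ Using the analogous representation $\|\cI_\psi^{A_l}(x)\|_2^2 = \int|\psi|^2\,d\nu_{A_l,x},$ this reduces to uniform smallness of the mass of $\nu_{A_l,x}$ on the off-diagonal band $\{0<|\lambda-\mu|<1/k\}.$ The plan is to combine two observations: (i) $f^{[1]}-\phi_k$ vanishes identically on the diagonal, so the (possibly sizeable) atomic diagonal masses of the discrete-spectrum $A_l$ make no contribution; (ii) since $\|A_l-A\|\to 0,$ the measures $\nu_{A_l,x}$ converge weakly to $\nu_{A,x}$ on the compact joint spectrum $[-\|A\|,\|A\|]^2,$ which follows by testing Lemma \ref{Lem=ConvDOI} against continuous squares. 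Combining (i) and (ii) should give the desired uniform smallness, closing the argument. This uniform bound—resting on the weak convergence of the bimodular spectral measures together with the diagonal-vanishing of the approximants—is the main technical difficulty.
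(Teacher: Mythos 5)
Your overall structure matches the paper's: denote $y_l=\cI_{f^{[1]}}^{A_l}(x)$, $y=\cI_{f^{[1]}}^A(x)$, establish $y_l\to y$ in $L_2$ (hence $y_l^*y_l\to y^*y$ in $L_1$), split the target difference into a piece handled by the $L_1$-contractivity of the Markov semigroup and a piece $\cI_{e^{-tG}}^{A_l}(z)-\cI_{e^{-tG}}^A(z)$ handled via $L_1$-by-$L_2$ approximation together with Lemma~\ref{Lem=ConvDOI} for the continuous outer symbol $e^{-tG}$. The paper proves exactly this for the outer part. The discrepancy lies entirely in how the inner $L_2$-convergence $y_l\to y$ is justified: the paper treats it as covered by Lemma~\ref{Lem=ConvDOI} (i.e.\ by \cite[Lemma 5.1]{CPZ}), relying implicitly on the fact that $f^{[1]}$ and its discretizations both vanish on the diagonal, whereas you try to prove it from scratch via continuous cutoffs $\phi_k=f^{[1]}\chi_k(\lambda-\mu)$.

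There is a genuine gap in your cutoff approach, precisely in the step you yourself flag as ``the main technical difficulty.'' Weak convergence of $\nu_{A_l,x}$ to $\nu_{A,x}$ (which you derive correctly from Lemma~\ref{Lem=ConvDOI} via polarization and Stone--Weierstrass) does \emph{not} yield $\sup_l\nu_{A_l,x}(\{0<|\lambda-\mu|<1/k\})\to 0$. The set $\{0<|\lambda-\mu|<1/k\}$ is open, so Portmanteau only gives a $\liminf$ lower bound, which is the wrong direction; and its closure contains the diagonal, so the $\limsup$ upper bound one gets from Portmanteau is $\nu_{A,x}(\{|\lambda-\mu|\leq 1/k\})$, which does not tend to $0$ when $\nu_{A,x}$ charges the diagonal. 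Observation (i) in your sketch (that $f^{[1]}-\phi_k$ vanishes on the diagonal) does not repair this, because the quantity you must control is the \emph{off}-diagonal near-band mass of $\nu_{A_l,x}$, not any diagonal contribution. What actually saves the estimate is a structural fact you never invoke: $\nu_{A_l,x}$ is the pushforward of $\nu_{A,x}$ under coordinate-wise rounding $(\lambda,\mu)\mapsto(l^{-1}\lfloor l\lambda\rfloor,l^{-1}\lfloor l\mu\rfloor)$, so $\nu_{A_l,x}$ is supported on $(l^{-1}\mathbb{Z})^2$. Hence for $k\geq l$ the punctured band $\{0<|\lambda-\mu|<1/k\}$ carries no $\nu_{A_l,x}$-mass at all, while for $k<l$ the preimage of the punctured band sits inside $\{0<|\lambda-\mu|<1/k+1/l\}\subseteq\{0<|\lambda-\mu|<2/k\}$, giving $\nu_{A_l,x}(\{0<|\lambda-\mu|<1/k\})\leq\nu_{A,x}(\{0<|\lambda-\mu|<2/k\})\to 0$. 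This repairs the estimate.

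Once one has the pushforward observation, however, the cutoff machinery is superfluous: one gets $\|y_l-y\|_2^2=\int|\psi_l-f^{[1]}|^2\,d\nu_{A,x}$ with $\psi_l(\lambda,\mu)=f^{[1]}(l^{-1}\lfloor l\lambda\rfloor,l^{-1}\lfloor l\mu\rfloor)$; both $\psi_l$ and $f^{[1]}$ are $0$ on the diagonal, $\psi_l\to f^{[1]}$ pointwise off the diagonal by continuity of $f$, and both are bounded by $\|f'\|_\infty\leq 1$, so dominated convergence concludes. This is the argument that makes Lemma~\ref{Lem=ConvDOI} applicable to $f^{[1]}$ despite its diagonal discontinuity, and it is what the paper (through \cite[Lemma 5.1]{CPZ}) relies on.
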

\begin{proof}
Denote, for brevity,
$$y_l=\cI_{f^{[1]}}^{A_l}(x),\quad y=\cI_{f^{[1]}}^A(x).$$
We have that $y_l\to y$ in $L_2$-norm and, therefore, $y_l^{\ast}y_l\to y^{\ast}y$ in $L_1$-norm as $l\to\infty.$ We have,
\begin{equation}\label{5conv1}
\cI^{A_l}_{e^{-tG}}(y_l^*y_l)=\cI^{A_l}_{e^{-tG}}(y_l^{\ast}y_l-y^{\ast}y)+\cI^{A_l}_{e^{-tG}}(y^*y).
\end{equation}
Since every Markov semi-group is an  $L_1$-contraction, it follows that
\begin{equation}\label{5conv2}
\cI^{A_l}_{e^{-tG}}(y_l^{\ast}y_l-y^{\ast}y)\to0,\quad l\to\infty,
\end{equation}
in the $L_1$-norm and, therefore in measure. 	

Let $z \in L_1(\cM)$ be arbitrary and fix $\epsilon>0$. Recall that  $L_2(\cM)$ is dense in $L_1(\cM)$, \cite[Theorem IX.2.13]{TakII}. Therefore  choose a decomposition $z = z_1+z_2$ such that $\|z_1\|_1<\epsilon$ and such that $z_2\in L_2(\cM)$.   We have,
\[\cI^{A_l}_{e^{-tG}}(z)-\cI^A_{e^{-tG}}(z)=\Big(\cI^{A_l}_{e^{-tG}}(z_2)-\cI^A_{e^{-tG}}(z_2)\Big)+\cI^{A_l}_{e^{-tG}}(z_1)-\cI^A_{e^{-tG}}(z_1).\]
Clearly,
\[\cI^{A_l}_{e^{-tG}}(z_2)-\cI^A_{e^{-tG}}(z_2)\to0,\quad l\to\infty,\]
in $L_2$-norm. Hence, there exists $l(\epsilon)$ such that, for $l>l(\epsilon),$
$$\|\cI^{A_l}_{e^{-tG}}(z_2)-\cI^A_{e^{-tG}}(z_2)\|_1\leq\|\cI^{A_l}_{e^{-tG}}(z_2)-\cI^A_{e^{-tG}}(z_2)\|_2<\epsilon.$$
Since,
\[\|\cI^{A_l}_{e^{-tG}}(z_1)\|_1<\|z_1\|_1\leq\epsilon,\quad \|\cI^A_{e^{-tG}}(z_1)\|_1<\|z_1\|_1\leq\epsilon,	\]
it follows that
\[\|\cI^{A_l}_{e^{-tG}}(z )-\cI^A_{e^{-tG}}(z )\|_1<3\epsilon,\quad l\geq l(\epsilon).\]
So we conclude that for $z \in L_1(\cM)$ we have
\begin{equation}\label{5conv3}
\cI^{A_l}_{e^{-tG}}(z)\to \cI^A_{e^{-tG}}(z),\quad l\to\infty,
\end{equation}
in $L_1$-norm. Applying this to $z = y^\ast y$ and combining this with \eqref{5conv1} and \eqref{5conv2}, we infer the first assertion. The second (easier) assertion follows as the convergence actually holds in $L_2$-norm.
\end{proof}

\begin{lem}\label{l2circ lemma} If $x\in L_2(\cM),$ then $\cI_{f^{[1]}}^A(x)\in L_2^{\circ}(\cM).$
\end{lem}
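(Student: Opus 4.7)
The plan is to exhibit a direct pointwise-to-$L_2$ decay argument for the composed double operator integral, using the spectral measure $F$ introduced in Section \ref{Sect=DOI}. Recall that by construction $\cI_\phi^A = \int_{\mathbb{R}^2} \phi(\lambda,\mu)\, dF(\lambda,\mu)$ is bounded Borel functional calculus with respect to the projection-valued measure $F$ on $B(L_2(\cM))$. In particular the composition rule $\cI_{e^{-tF}}^A \cI_{f^{[1]}}^A = \cI_{e^{-tF}\cdot f^{[1]}}^A$ holds automatically, so the goal reduces to proving that
\[
\bigl\Vert \cI_{e^{-tF}\cdot f^{[1]}}^A(x) \bigr\Vert_2 \longrightarrow 0, \qquad t \to \infty.
\]

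First I would introduce, for fixed $x \in L_2(\cM)$, the finite positive Borel measure $\mu_x$ on $\mathbb{R}^2$ defined by $\mu_x(X) = \langle F(X)x, x\rangle_{L_2(\cM)}$; it has total mass $\Vert x \Vert_2^2$. Standard spectral theorem yields, for any bounded Borel $\phi:\mathbb{R}^2 \to \mathbb{C}$,
\[
\bigl\Vert \cI_\phi^A(x) \bigr\Vert_2^2
= \langle \cI_{|\phi|^2}^A(x),\, x\rangle
= \int_{\mathbb{R}^2} |\phi(\lambda,\mu)|^2 \, d\mu_x(\lambda,\mu).
\]
I would apply this with $\phi = \phi_t := e^{-tF}\cdot f^{[1]}$.

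Next I would analyse the pointwise behaviour of $\phi_t$. Split $\mathbb{R}^2$ into the diagonal $D = \{\lambda = \mu\}$ and its complement. On $D$ we have $f^{[1]} \equiv 0$, so $\phi_t \equiv 0$ there. Off $D$, $F(\lambda,\mu) = |\lambda-\mu|^2 + |f(\lambda)-f(\mu)|^2 > 0$, so $e^{-tF(\lambda,\mu)} \to 0$ as $t\to\infty$; hence $\phi_t(\lambda,\mu) \to 0$ everywhere on $\mathbb{R}^2$. Moreover $\Vert f'\Vert_\infty \leq 1$ forces $|f^{[1]}| \leq 1$, so $|\phi_t| \leq 1$ uniformly in $t$, and the constant $1$ is $\mu_x$-integrable since $\mu_x$ is finite. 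The dominated convergence theorem then gives $\int |\phi_t|^2 \, d\mu_x \to 0$, which is the required $L_2$-decay.

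The only step that is not completely routine is the spectral representation $\Vert \cI_\phi^A(x)\Vert_2^2 = \int |\phi|^2 \, d\mu_x$; but this is immediate from the fact that $\phi \mapsto \cI_\phi^A$ is a $\ast$-homomorphism of $L_\infty(\mathbb{R}^2)$ into $B(L_2(\cM))$ via the PVM $F$, together with the Riesz representation of the positive functional $\phi \mapsto \langle \cI_\phi^A(x), x\rangle$ as integration against $\mu_x$. All told, the lemma is essentially a dominated convergence statement once the symbol calculus is in place.
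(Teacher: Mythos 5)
Your proof is correct, and it takes a genuinely different (and arguably cleaner) route than the paper's. The paper characterizes $L_2^\circ(\cM)$ as the kernel of the conditional expectation $\mathbb{E}_{\mathcal{D}'}$ onto the relative commutant $\mathcal{D}_A'\cap\cM$ of the spectral subalgebra of $A$, and then shows $\mathbb{E}_{\mathcal{D}'}\bigl(\cI_{f^{[1]}}^A(x)\bigr)=0$ by a pinching argument over finer and finer discretizations $p_{m,k}=\chi_{[k/m,(k+1)/m)}(A)$, passing to the limit $m\to\infty$. You bypass the conditional expectation entirely: you identify $\cI_{e^{-tF}}^A\circ\cI_{f^{[1]}}^A=\cI_{e^{-tF}f^{[1]}}^A$ via the PVM calculus for $F$, then note that the symbol $e^{-tF}f^{[1]}$ vanishes on the diagonal (because $f^{[1]}$ does) and tends to zero pointwise off it (because $F>0$ there), and finish by dominated convergence against the finite scalar spectral measure $\mu_x=\langle F(\cdot)x,x\rangle$, using the uniform bound $|e^{-tF}f^{[1]}|\le\Vert f'\Vert_\infty\le 1$. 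The two proofs encode the same two facts --- the semigroup kills off-diagonal symbols, and $f^{[1]}$ has no diagonal component --- but you package them in a single dominated-convergence step that directly gives $L_2$-norm decay, without introducing $\mathcal{D}'$ or the approximants $x_m$. Your version is more self-contained and makes the role of $F>0$ off the diagonal completely explicit.
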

\begin{proof}
Let $\mathcal{D}_A$ be the von Neumann algebra generated by the spectral projections of $A$. Let $\mathcal{D}'  = \mathcal{D}_A' \cap \cM$ be its relative commutant with trace preserving conditional expectation $\mathbb{E}_{\mathcal{D}'}: \cM \rightarrow \mathcal{D}$.
If $z\in L_2(\cM),$ then
$$\cI^A_{e^{-tF}}(z)\to \mathbb{E}_{\mathcal{D}'}(z),\quad t\to\infty,$$
in measure. Therefore, $z\in L_2^{\circ}(\cM)$ if and only if $\mathbb{E}_{\mathcal{D}'}(z)=0.$ 		
We claim that $\mathbb{E}_{\mathcal{D}'}(\cI_{f^{[1]}}^A(x))=0.$ Set
$$p_{m,k}=\chi_{[\frac{k}{m},\frac{k+1}{m})}(A),\quad x_m=\sum_{k\in\mathbb{Z}}p_{m,k}xp_{m,k}.$$
We have
$$\cI_{f^{[1]}}^A\Big(p_{m,k}xp_{m,l}\Big)=p_{m,k}\cdot \cI_{f^{[1]}}^A(x)\cdot p_{m,l}.$$
Therefore,
$$\mathbb{E}_{\mathcal{D}'}\Big(\cI_{f^{[1]}}^A\Big(p_{m,k}xp_{m,l}\Big)\Big)=p_{m,k}\cdot \mathbb{E}_{\mathcal{D}'}(\cI_{f^{[1]}}^A(x))\cdot p_{m,l}.$$
If $k\neq l,$ then
$$\mathbb{E}_{\mathcal{D}'}\Big(\cI_{f^{[1]}}^A\Big(p_{m,k}xp_{m,l}\Big)\Big)=p_{m,k}\cdot p_{m,l}\cdot \mathbb{E}_{\mathcal{D}'}(\cI_{f^{[1]}}^A(x))=0.$$
Therefore,
\[
\begin{split}
& \mathbb{E}_{\mathcal{D}'}(\cI_{f^{[1]}}^A(x))=\sum_{k,l\in\mathbb{Z}}\mathbb{E}_{\mathcal{D}'}\Big(\cI_{f^{[1]}}^A\Big(p_{m,k}xp_{m,l}\Big)\Big)  \\
= & \sum_{k\in\mathbb{Z}}\mathbb{E}_{\mathcal{D}'}\Big(\cI_{f^{[1]}}^A\Big(p_{m,k}xp_{m,k}\Big)\Big)= \mathbb{E}_{\mathcal{D}'}(\cI_{f^{[1]}}^A(x_m)).
\end{split}
\]
As $m\to\infty,$ we have convergence in measure
$$x_m\to \mathbb{E}_{\mathcal{D}'} (x),\quad \cI_{f^{[1]}}^A(x_m)\to \cI_{f^{[1]}}^A( \mathbb{E}_{\mathcal{D}'}(x))=0.$$
This concludes the proof.
\end{proof}

\begin{proof}[Proof of Theorem \ref{bmo boundedness} \eqref{bmoii}] By the first equality of Lemma \ref{transference lemma} we see that $\varphi_{A_l}$ maps $\BMO(\cM,\mathcal{I}^{A_l})$ to $\BMO(L_{\infty}(\mathbb{R}^2) \otimes\cM,\mathcal{S}\otimes{\rm id}_{\cM})$ isometrically with $\cS$ the Heat semi-group. By the second equality of Lemma \ref{transference lemma} we then further have,
\[\begin{split}
\|\cI_{f^{[1]}}^{A_l}(x)\|_{\BMO(\cM,\mathcal{I}^{A_l})}= & \|\varphi_{A_l} \circ  \cI_{f^{[1]}}^{A_l}(x)    \|_{\BMO(\cM,\mathcal{I}^{A_l})}\\
= & \|(m_0(\nabla)\otimes{\rm id}_{\mathcal{M}})(\varphi_{A_l}(x))\|_{\BMO(L_{\infty}(\mathbb{R}^2) \otimes\cM,\mathcal{S}\otimes{\rm id}_{\cM})}.
\end{split}
\]
As $\varphi_{A_l}(x)$ is trigonometric, by Proposition \ref{Prop=CBMulti},
\[
\|(m_0(\nabla)\otimes{\rm id}_{\mathcal{M}})(\varphi_{A_l}(x))\|_{\BMO(L_{\infty}(\mathbb{R}^2) \otimes\cM,\mathcal{S}\otimes{\rm id}_{\cM})} \leq c_{abs} \Vert \varphi_{A_l}(x) \Vert_\infty = c_{abs} \Vert x \Vert_\infty.
\]
 Therefore, we have
$$\|\cI_{f^{[1]}}^{A_l}(x)\|_{\BMO(\cM,\mathcal{I}^{A_l})}\leq c_{{\rm abs}}\|x\|_{\infty}.$$
Thus, for every $t\geq0,$ we have
$$-c_{{\rm abs}}^2\|x\|_{\infty}^2\leq B_l(t)\leq c_{{\rm abs}}^2\|x\|_{\infty}^2,$$
where
$$B_l(t)=\cI^{A_l}_{e^{-tF}}\Big(\cI_{f^{[1]}}^{A_l}(x)^{\ast}\cI_{f^{[1]}}^{A_l}(x)\Big)-\cI^{A_l}_{e^{-tF}}\Big(\cI_{f^{[1]}}^{A_l}(x)\Big)^{\ast}\cI^{A_l}_{e^{-tF}}\Big(\cI_{f^{[1]}}^{A_l}(x)\Big).$$
By Lemma \ref{5 convergence lemma}, we have $B_l(t)\to B(t)$ in measure as $l\to\infty.$ Here,
$$B(t)=\cI^A_{e^{-tF}}\Big(\cI_{f^{[1]}}^A(x)^{\ast}\cI_{f^{[1]}}^A(x)\Big)-\cI^A_{e^{-tF}}\Big(\cI_{f^{[1]}}^A(x)\Big)^{\ast}\cI^A_{e^{-tF}}\Big(\cI_{f^{[1]}}^A(x)\Big).$$
Therefore,
$$-c_{{\rm abs}}^2\|x\|_{\infty}^2\leq B(t)\leq c_{{\rm abs}}^2\|x\|_{\infty}^2$$
for every $t\geq0.$ In other words,
$$\|\cI_{f^{[1]}}^A(x)\|_{\BMO(\cM,\cI^A)}\leq c_{{\rm abs}}\|x\|_{\infty}.$$
 By Lemma \ref{l2circ lemma}, we also have $\cI_{f^{[1]}}^A(x)\in L_2^{\circ}(\cM).$ A combination of this fact and the norm estimate complete the proof.
\end{proof}

We shall need the following auxiliary lemma in the next section.
\begin{lem}\label{Lem=Fix}
Suppose that $A$ has finite spectrum. We have that $\BMO(\mathcal{M}, \cI^A) = \BigBMO(\mathcal{M}, \cI^A)$ as vector spaces with equality of norms.
\end{lem}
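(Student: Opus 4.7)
My plan is to identify both $\|x\|_{\BMO(\cM,\cI^A)}$ and $\|x\|_{\BigBMO(\cM,\cI^A)}$ with the common $t$-independent quantity
\[
Q(x)^2 \;:=\; \bigl\|E(x^*x) - E(x)^* E(x)\bigr\|_\infty \;\vee\; \bigl\|E(xx^*) - E(x) E(x)^*\bigr\|_\infty,
\]
where $E: \cM \to \{A\}''$ is the $\tau$-preserving conditional expectation onto the finite-dimensional abelian subalgebra spanned by the spectral projections of $A$. The key geometric input is that, since $A$ has finite spectrum and $F(\lambda,\mu) = 0$ precisely when $\lambda = \mu$, the semi-group $T_t = \cI^A_{e^{-tF}}$ acts on each elementary piece $p_i y p_j$ by the scalar multiplier $e^{-tF(i,j)}$. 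Consequently $T_t(y) \to E(y)$ in the operator norm of $\cM$ as $t \to \infty$ for every $y \in \cM$.

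For the lower bound I would take $t \to \infty$ in each of the two defining integrands. The BMO integrand clearly tends to $E(x^*x) - E(x)^* E(x)$ in operator norm. For the BigBMO integrand, expanding
\[
T_t(|x - T_t(x)|^2) = T_t(x^*x) - T_t\bigl(x^*\, T_t(x)\bigr) - T_t\bigl(T_t(x)^*\, x\bigr) + T_t\bigl(|T_t(x)|^2\bigr)
\]
and passing to the limit yields the same value: the conditional-expectation identity $E(y^*E(z)) = E(y)^*E(z)$, valid because $E(z)$ lies in the subalgebra $\{A\}''$ on which $E$ acts as the identity, forces the three trailing terms to telescope to $-E(x)^*E(x)$. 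Applying the same reasoning to $x^*$ for the row variants yields $\|x\|_{\BMO},\|x\|_{\BigBMO} \geq Q(x)$.

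For the reverse inequality I would show that for every $t \geq 0$ both column and row BMO/BigBMO integrands are dominated in operator norm by $Q(x)^2$. Writing $V_t^c(x) := T_t(x^*x) - T_t(x)^* T_t(x)$, the $\{A\}''$-bimodule property $T_t(p_i y) = p_i T_t(y)$ gives the additive decomposition $V_t^c(x) = \sum_i V_t^c(p_i x)$, and a direct computation shows each summand is the PSD Hadamard product of the Gram matrix $(\phi_t(j,l) - \phi_t(i,j) \phi_t(i,l))_{j,l}$ with the rank-one matrix $(\overline{x_{ij}} \, x_{il})_{j,l}$. Under the heat-kernel factorization $\phi_t(i,j) = \langle \psi_i^t, \psi_j^t \rangle$ inherent in $F(\lambda,\mu) = \|K(\lambda)-K(\mu)\|_2^2$ with $K(\lambda) = (\lambda, f(\lambda))$, the Gram coefficient coincides with that of the residual vectors $(I - P_i) \psi_\bullet^t$ (where $P_i$ projects onto $\psi_i^t$), and a Cauchy--Schwarz manipulation then bounds $\|V_t^c(x)\|_\infty$ by a convex combination of the column and row $t \to \infty$ values. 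The BigBMO integrand is treated in parallel, using the identity $1 - \phi_t(i,j) = \tfrac{1}{2}\|\psi_i^t - \psi_j^t\|^2$.

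The main obstacle is this last uniform-in-$t$ domination, because neither $\|V_t^c(x)\|_\infty$ nor its row analogue is monotone in $t$ individually; one can check already in $3 \times 3$ examples that either can strictly exceed its own $t \to \infty$ limit at intermediate times, and the required control of the maximum over column and row must be extracted from the cross-domination between the two sides inherent in the heat-kernel geometry encoded by $F$.
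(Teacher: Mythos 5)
Your argument has a genuine gap, and you identify it yourself: the upper bound requires showing that for \emph{every} $t\geq 0$ the column and row $\BMO$/$\BigBMO$ integrands are dominated in operator norm by the limiting quantity $Q(x)^2$, and, as you note, neither the column nor the row integrand is monotone in $t$, so the bound cannot be obtained termwise from the $t\to\infty$ limit. You flag this as ``the main obstacle,'' and indeed the proposal does not close this step. Note also that you are aiming at a strictly stronger statement than the lemma: the lemma only asserts that the two suprema over $t$ coincide, not that each supremum is achieved as $t\to\infty$ and equals the conditional-variance quantity $Q(x)$. The whole weight of your plan rests on this stronger and unproven claim.

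The paper's proof sidesteps $t\to\infty$ limits entirely by exploiting the transference $*$-monomorphism $\varphi_A$ of Lemma~\ref{transference lemma}. Because $\varphi_A$ intertwines $\cI^A_{e^{-tF}}$ with the amplified Euclidean heat semigroup, both the $\BMO$ and the $\BigBMO$ integrand of $x$ at a \emph{fixed} $t$ can be rewritten, via $\varphi_A$ (which is isometric), as the corresponding integrands of the trigonometric function $f=\varphi_A(x)$ for $\cS\otimes\id_{\cM}$. The paper then invokes a pointwise-in-$t$ identity (attributed to \cite[Proof of Lemma 1.3]{JMP}) equating the two heat-semigroup integrands, and the lemma follows immediately from equality of the suprema. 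The transference mechanism --- available precisely because $A$ has finite spectrum, which is what makes $U^A$ and $\varphi_A$ well-defined --- is the structural idea that your proposal does not use and that replaces the hard uniform-in-$t$ estimate you were stuck on.
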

\begin{proof}
We have an equality \cite[Proof of Lemma 1.3]{JMP} for $f \in L_\infty(\mathbb{R}^n) \otimes \cM$,
\[
\begin{split}
& (e^{t \Delta}\otimes\id_{\cM})  (  f^\ast f)-(e^{t \Delta}\otimes\id_{\cM})  (f)^\ast  (e^{t \Delta}\otimes\id_{\cM})  (f) \\
= &(e^{t \Delta}\otimes\id_{\cM}) \left( \vert f -  (e^{t \Delta}\otimes\id_{\cM})  (f) \vert^2 \right).
\end{split}
\]

For $x\in\cM$ set $f=\varphi_A(x).$ We get by Lemma \ref{transference lemma},
\[
\begin{split}
\Vert x \Vert_{\BigBMO(\cM, \mathcal{I}^A)} =&
\sup_{t>0}\Big\|(e^{t \Delta}\otimes\id_{\cM}) \left( \vert f -  (e^{t \Delta}\otimes\id_{\cM})  (f) \vert^2 \right)\Big\|_{\infty}^{\frac12}\\
=&\sup_{t>0}\Big\|(e^{t \Delta}\otimes\id_{\cM})  (  f^\ast f)-(e^{t \Delta}\otimes\id_{\cM})  (f)^\ast  (e^{t \Delta}\otimes\id_{\cM})  (f)\Big\|_{\infty}^{\frac12}\\
= &\Vert x \Vert_{\BMO(\cM, \mathcal{I}^A)}.
\end{split}
\]
\end{proof}

\section{Conclusions for BMO-estimates for commutators} \label{Sect=Conclusions}

We now collect several results in perturbation theory of commutators as a consequence of  Theorem  \ref{bmo boundedness}. In particular we recover the main results from \cite{PotapovSukochevActa} and \cite{CMPS}. We in fact improve of them in terms of BMO-estimates.

As before  we fix a Lipschitz function $f: \mathbb{R} \rightarrow \mathbb{R}$ and we assume that $\Vert f' \Vert_\infty \leq 1.$
 We set,
\[
\psi(\lambda, \mu) = \lambda - \mu, \qquad \psi_f(\lambda, \mu) = f(\lambda) - f(\mu).
\]
Note that $\cI_\psi^A(x) = Ax - xA = [A,x]$ and $\cI_{\psi_f}^A(x) = [f(A), x]$ for $A \in \cM$ self-adjoint. We start with the following corollary.

\begin{cor} \label{Cor=CommutatorBMO}
In the setting of Theorem \ref{bmo boundedness}, there exists a constant $c_{abs}$ such that for every $A \in \cM$  self-adjoint and every $x \in \cM$ we have,
\[\Vert [f(A), x] \Vert_{\BMO_{\cI^A}} \leq  c_{abs} \Vert [A, x] \Vert_\infty.\]
\end{cor}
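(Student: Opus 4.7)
The plan is to reduce the statement to a direct application of Theorem \ref{bmo boundedness} \eqref{bmoii} by exhibiting the commutator $[f(A),x]$ as the image, under the double operator integral $\cI^A_{f^{[1]}}$, of the \emph{bounded} operator $[A,x] \in \cM$.

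First, observe that for any self-adjoint $A \in \cM$ with spectral measure $E$, and any $x \in \cM$,
\[
[f(A),x] = \int_{\mathbb{R}^2} (f(\lambda)-f(\mu))\, dE(\lambda)\, x\, dE(\mu) = \cI^A_{\psi_f}(x),
\]
and analogously $[A,x] = \cI^A_{\psi}(x)$ with $\psi(\lambda,\mu)=\lambda-\mu$. Next, the symbols multiply in the expected way, since for commuting spectral projections $dE(\lambda)dE(\lambda') = \delta_{\lambda,\lambda'}dE(\lambda)$; as a consequence, one obtains the composition rule
\[
\cI^A_{\phi}\circ \cI^A_{\eta} = \cI^A_{\phi\cdot\eta},
\]
for bounded Borel symbols $\phi,\eta$ on $\mathbb{R}^2$. (This is a well-known elementary property of DOIs with common spectral family; for finite-spectrum $A$ it is a direct matrix computation, and the general case follows by the approximation argument of Lemma \ref{Lem=ConvDOI}.)

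Now factor the symbol: $\psi_f(\lambda,\mu) = f^{[1]}(\lambda,\mu)\cdot \psi(\lambda,\mu)$ by the very definition of the divided difference (including on the diagonal, where both sides vanish). Applying the composition rule gives
\[
[f(A),x] \;=\; \cI^A_{\psi_f}(x) \;=\; \cI^A_{f^{[1]}}\bigl(\cI^A_{\psi}(x)\bigr) \;=\; \cI^A_{f^{[1]}}\bigl([A,x]\bigr).
\]
Since $[A,x]\in\cM$ (with norm $\|[A,x]\|_\infty$), and since Theorem \ref{bmo boundedness} \eqref{bmoii} gives
\[
\bigl\| \cI^A_{f^{[1]}}(y) \bigr\|_{\BMO_{\cI^A}} \leq c_{abs}\,\|y\|_\infty, \qquad y\in\cM,
\]
setting $y = [A,x]$ yields the claimed inequality.

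The proof is essentially a one-line reduction to Theorem \ref{bmo boundedness}, so there is no real obstacle; the only point that requires a brief justification is the multiplicative composition rule for DOIs with a common spectral family, and this is routine either by direct computation in the finite-spectrum case and a subsequent approximation, or by noting that both sides define the same bounded operator on $L_2(\cM)$ by the product formula for the underlying projection-valued measure $F$ on $\mathbb{R}^{2}$.
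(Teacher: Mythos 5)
Your proof is correct and follows exactly the paper's argument: factor $\psi_f = f^{[1]}\cdot\psi$, use the DOI composition rule to write $[f(A),x] = \cI^A_{f^{[1]}}([A,x])$, and then apply Theorem \ref{bmo boundedness}~\eqref{bmoii}. The only difference is that you spell out the justification of the composition rule, which the paper uses silently.
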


\begin{proof}[Proof of Corollary \ref{Cor=CommutatorBMO}]
We have,
\[[f(A), x] = \cI_{\psi_f}^{A}(x) = (\cI_{f^{[1]}}^{A} \circ \cI_{\psi}^A)(x) = \cI_{f^{[1]}}^{A}([A, x]).\]
Hence,   by Theorem \ref{bmo boundedness},
\[\Vert [f(A), x] \Vert_{\BMO_{\cI^A}} =\Vert  \cI_{f^{[1]}}^{A}([A, x]) \Vert_{\BMO_{\cI^A}}\leq \Vert \cI_{f^{[1]}}^{A}: \cM \rightarrow\BMO_{\cI^A} \Vert \Vert [A, x] \Vert_\infty.	\]
\end{proof}

\begin{rmk}
For a general von Neumann algebra $\cM$ one cannot define a canonical Markov semi-group without further structure. This is why in Corollary \ref{Cor=CommutatorBMO} the semi-group depends on the  self-adjoint operator $A \in \cM$ and the Lipschitz function $f$ and we believe this is the suitable  end-point estimate. After interpolation the dependence of $A$ and $f$ vanishes in Theorem \ref{Thm=Commutator} and we obtain best constant estimates.
\end{rmk}

Next, through our BMO approach we collect many optimal results in perturbation theory.
 Firstly we retrieve the main result of \cite{CMPS}.

\begin{thm} \label{Thm=DOIbound}
In the setting of Theorem \ref{bmo boundedness}, let $A \in \cM$ be self-adjoint. 	There exists a constant $c_{abs}$ such that for every $1 < p < \infty$,
\[\Vert \cI_{f^{[1]}}^{A}: L_p(\cM) \rightarrow L_p(\cM) \Vert \leq c_{abs} \frac{p^2}{p-1}.\]
\end{thm}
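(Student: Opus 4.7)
The plan is to interpolate between Theorem \ref{bmo boundedness}(ii) and the immediate $L_2$-contractivity of $\cI_{f^{[1]}}^A$ (coming from $\Vert f^{[1]} \Vert_\infty \leq \Vert f' \Vert_\infty \leq 1$), using the Junge--Mei interpolation Theorem \ref{Thm=JungeMeiInterpolation}; its hypothesis of a standard Markov dilation is supplied by Theorem \ref{Thm=DOIMarkovDilation}. Duality then covers $1 < p < 2$.

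First, I would reduce to the case where $A$ has finite spectrum. Since $A \in \cM$ is bounded, the operator $A_l := l^{-1}\lfloor lA \rfloor$ has finite spectrum for every $l \geq 1$. By Lemma \ref{Lem=ConvDOI}, $\cI_{f^{[1]}}^{A_l}(x) \to \cI_{f^{[1]}}^{A}(x)$ in $L_2$ (hence in measure) for every $x \in L_2(\cM)$. A uniform-in-$l$ bound $\Vert \cI_{f^{[1]}}^{A_l} : L_p \to L_p \Vert \leq c_{abs}\, p^2/(p-1)$ therefore transfers to $A$ by lower semicontinuity of $\Vert \cdot \Vert_p$ under convergence in measure, together with density of $\cM$ in $L_p(\cM)$.

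With $A$ of finite spectrum, Lemma \ref{Lem=Fix} identifies $\BMO(\cM,\cI^A) = \BigBMO(\cM,\cI^A)$ isometrically, so Theorem \ref{bmo boundedness}(ii) reads $\Vert \cI_{f^{[1]}}^A : \cM \to \BigBMO_{\cI^A} \Vert \leq c_{abs}$. Coupling this with $\Vert \cI_{f^{[1]}}^A : L_2(\cM) \to L_2^\circ(\cM) \Vert \leq 1$ (the range lies in $L_2^\circ$ by Lemma \ref{l2circ lemma}) and interpolating between the compatible couples $(\cM, L_2^\circ)$ and $(\BigBMO_{\cI^A}, L_2^\circ)$, using the standard identification $[\cM, L_2^\circ]_{2/p} = L_p^\circ$ on the source side and Theorem \ref{Thm=JungeMeiInterpolation}'s $[\BigBMO_{\cI^A}, L_2^\circ]_{2/p} = L_p^\circ$ (up to a constant $\simeq p$) on the target side, yields
\[
\Vert \cI_{f^{[1]}}^A : L_p^\circ(\cM) \to L_p^\circ(\cM) \Vert \lesssim p \cdot c_{abs}^{1-2/p} \cdot 1^{2/p} \lesssim c_{abs}\, p, \qquad p \geq 2.
\]
Since $\cI_{f^{[1]}}^A$ annihilates the fixed-point subalgebra (the symbol $f^{[1]}$ vanishes on the diagonal), the bound extends from $L_p^\circ$ to all of $L_p$ at the cost of another absolute constant via the splitting $x = (x - \mathbb{E}_{\mathcal{D}_A'} x) + \mathbb{E}_{\mathcal{D}_A'} x$.

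For $1 < p < 2$, observe that $f^{[1]}$ is real-valued and symmetric, so $\cI_{f^{[1]}}^A$ is self-adjoint on $L_2(\cM)$; the $L_p \to L_p$ adjoint is $\cI_{f^{[1]}}^A$ acting on $L_{p'}$ with $p' = p/(p-1) \geq 2$, and duality transfers the bound to $c_{abs}\, p/(p-1)$. Combining the two ranges and using $\max(p, p/(p-1)) \leq p^2/(p-1)$ completes the proof. The main obstacle is the interpolation step: although Theorem \ref{bmo boundedness}(ii) only supplies $\cI_{f^{[1]}}^A : \cM \to \BigBMO$ (rather than a self-map of $\BigBMO$), one has to interpret it as a bounded morphism between compatible couples anchored in $L_1(\cM)$ in order for Theorem \ref{Thm=JungeMeiInterpolation} to apply; the factor $p$ for $p \geq 2$ comes entirely from the equivalence constant in that interpolation, while the $1/(p-1)$ for small $p$ is produced by the duality step.
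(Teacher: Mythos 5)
Your argument is correct and follows essentially the same route as the paper: reduce to finite spectrum via $A_l=\frac1l\lfloor lA\rfloor$, invoke Lemma \ref{Lem=Fix} to pass to $\BigBMO$, interpolate the end-points $\cM\to\BigBMO$ and $L_2\to L_2^\circ$ via Theorem \ref{Thm=JungeMeiInterpolation} (enabled by the Markov dilation from Theorem \ref{Thm=DOIMarkovDilation}), transfer the uniform bound to $A$ by the Fatou property of $\Vert\cdot\Vert_p$ under convergence in measure, and dualize for $1<p<2$. One small simplification: it is cleaner to take $(\cM, L_2)$ as the source couple, so the standard identity $[\cM, L_2]_{2/p}=L_p$ applies and the operator lands in $L_p^\circ\subseteq L_p$ automatically; this avoids both the unjustified identity $[\cM, L_2^\circ]_{2/p}=L_p^\circ$ and the subsequent splitting through $\mathbb{E}_{\mathcal{D}_A'}$.
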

\begin{proof} Setting as before $A_l=\frac1l\lfloor l A\rfloor,$ we infer from Lemma \ref{Lem=Fix} that $\BMO(\mathcal{M}, \mathcal{I}^{A_l}) = \BigBMO(\mathcal{M}, \mathcal{I}^{A_l}).$

By Theorem \ref{bmo boundedness}  and its proof we have
\[\Vert \cI_{f^{[1]}}^{A_l}: \cM \rightarrow \BMO(\cM, \cI^{A_l}) \Vert \leq \Vert m_0(\nabla): L_\infty \rightarrow \BMO_{\cS} \Vert_{cb}.\]
Also,
$$\Vert \cI_{f^{[1]}}^{A_l}: L_2(\cM) \rightarrow L_2(\cM)   \Vert  \leq \Vert f' \Vert_\infty \leq 1.$$
By Theorem \ref{Thm=DOIStandardMarkovDilationFix} we see that $\cI^{A_l}$ has a standard Markov dilation. Therefore, by Theorem \ref{Thm=JungeMeiInterpolation} for  $2 \leq p < \infty$ we have
\[\Vert \cI_{f^{[1]}}^{A_l}: L_p(\cM) \rightarrow L_p(\cM) \Vert \leq  c_{abs} p.\]
Further, for $x \in \mathcal{M}$ we have $\cI_{f^{[1]}}^{A_l}(x) \rightarrow \cI_{f^{[1]}}^{A}(x)$ in measure as $l \rightarrow \infty$. Hence it follows that also
\[\Vert \cI_{f^{[1]}}^A: L_p(\cM) \rightarrow L_p(\cM) \Vert \leq  c_{abs} p.\]

Next let $1 < p \leq 2$ and let $q$ be conjugate, i.e. $p^{-1} + q^{-1} = 1$. By duality we find that
\[(\cI^{A}_{f^{[1]}})^\ast: L_p(\cM) \rightarrow L_p(\cM),\]
is the extension of the double operator integral   $\cI^{A}_{\bar{f}^{[1]}}.$ So that,
\[\cI^{A}_{f^{[1]}}: L_p(\cM) \rightarrow L_p(\cM)  = (\cI^{A}_{\overline{f}^{[1]}}: L_q(\cM) \rightarrow L_q(\cM) )^\ast	\]
is bounded  with $\Vert \cI^{A}_{f^{[1]}}: L_p(\cM) \rightarrow L_p(\cM)  \Vert \leq c_{abs} (p-1)^{-1}$. 	
\end{proof}

\begin{thm}\label{Thm=Commutator}	
In the setting of Theorem \ref{bmo boundedness}, there exists an absolute constant $c_{abs}$ such that for any operators $A \in \cM$ self-adjoint and $x \in \cM$, and any $1 < p < \infty$, we have
\[\Vert [f(A), x] \Vert_{p} \leq c_{abs} \frac{ p^2}{p -1} \Vert [A, x] \Vert_\infty.\]
\end{thm}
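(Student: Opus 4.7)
The plan is to deduce the bound directly from the $L_p$-boundedness of the divided-difference double operator integral established in Theorem \ref{Thm=DOIbound}, using the factorization $[f(A), x] = \cI_{f^{[1]}}^A([A, x])$ together with the trivial embedding $\cM \hookrightarrow L_p(\cM)$.

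First I would record the factorization. With $\psi(\lambda,\mu) = \lambda - \mu$ and $\psi_f(\lambda,\mu) = f(\lambda) - f(\mu)$, the pointwise identity $\psi_f = f^{[1]} \cdot \psi$ on $\mathbb{R}^2$, together with the multiplicativity of double operator integrals in their symbols, yields $\cI_{\psi_f}^A = \cI_{f^{[1]}}^A \circ \cI_\psi^A$. Evaluating at $x \in \cM$ gives precisely
\[
[f(A), x] = \cI_{\psi_f}^A(x) = \cI_{f^{[1]}}^A\bigl([A, x]\bigr).
\]

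Next I would apply Theorem \ref{Thm=DOIbound} with $y := [A, x] \in \cM \subset L_p(\cM)$ to obtain
\[
\|[f(A), x]\|_p \leq c_{abs}\frac{p^2}{p-1}\|[A, x]\|_p \leq c_{abs}\frac{p^2}{p-1}\|[A, x]\|_\infty,
\]
where the last inequality is the contractive inclusion $\cM \hookrightarrow L_p(\cM)$ in the (normalized) finite trace setting.

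There is essentially no substantive obstacle here: all of the analytic content has already been packed into Theorem \ref{Thm=DOIbound}, which itself rests on the BMO-transference of Theorem \ref{bmo boundedness}, the Markov dilation for semi-groups of double operator integrals (Theorem \ref{Thm=DOIMarkovDilation}), and the Junge--Mei interpolation theorem (Theorem \ref{Thm=JungeMeiInterpolation}). The only minor point to flag is the trace normalization $\tau(1) = 1$, implicit throughout the paper, which ensures that the final step $\|y\|_p \leq \|y\|_\infty$ for $y \in \cM$ incurs no extra constant.
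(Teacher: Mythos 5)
Your proof is correct and follows exactly the route the paper takes: factor the commutator as $[f(A),x]=\cI_{f^{[1]}}^A([A,x])$, apply the $L_p\to L_p$ bound of Theorem \ref{Thm=DOIbound}, and pass from $\Vert[A,x]\Vert_p$ to $\Vert[A,x]\Vert_\infty$ by the contractive inclusion of $\cM$ into $L_p(\cM)$ for a normalized trace. Your write-up in fact makes the last step explicit, which the paper's displayed chain of inequalities leaves implicit (it stops at $\Vert[A,x]\Vert_p$ even though the theorem statement has $\Vert[A,x]\Vert_\infty$).
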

\begin{proof}
We derive the proof from Theorem \ref{Thm=DOIbound} as in Corollary \ref{Cor=CommutatorBMO}. We find that for $x \in \cM$,
\begin{equation}\label{Eqn=ComEst}
\begin{split}
& \Vert [f(A), x] \Vert_{p} =\Vert  \cI^{A}_{f^{[1]}}([A, x]) \Vert_{p} \\
\leq & \Vert \cI^{A}_{f^{[1]}}: L_p \rightarrow L_p \Vert \Vert [A, x] \Vert_p \leq c_{abs} \frac{p^2}{p-1}  \Vert [A, x] \Vert_p.
\end{split}
\end{equation}
\end{proof}

\begin{cor}\label{Cor=LipEst}
There exists a constant $c_{abs}$ such that for any  self-adjoint operators $B,C \in \cM$ we have
\[\Vert f(B) - f(C) \Vert_{p} \leq   c_{abs} \frac{p^2}{p-1} \Vert B - C \Vert_p.\]
\end{cor}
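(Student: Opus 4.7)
\medskip

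\noindent\textbf{Proof plan for Corollary \ref{Cor=LipEst}.}
The strategy is the standard $2\times 2$ block-matrix trick that converts a Lipschitz estimate into a commutator estimate, so that Theorem \ref{Thm=DOIbound} (equivalently Theorem \ref{Thm=Commutator}) can be invoked on the amplified algebra $\widetilde\cM = M_2(\mathbb{C})\otimes \cM$ equipped with the (non-normalized) trace $\mathrm{Tr}_2\otimes\tau$. Concretely, given self-adjoint $B,C\in\cM$, set
\[
\widetilde A=\begin{pmatrix} B & 0 \\ 0 & C\end{pmatrix}=\widetilde A^{\,*}\in\widetilde\cM,
\qquad
\widetilde x=\begin{pmatrix} 0 & 1 \\ 0 & 0\end{pmatrix}\in\widetilde\cM .
\]
Since the functional calculus respects block-diagonal decompositions we have $f(\widetilde A)=\operatorname{diag}(f(B),f(C))$, whence a direct multiplication gives
\[
[\widetilde A,\widetilde x]=\begin{pmatrix} 0 & B-C \\ 0 & 0\end{pmatrix},\qquad
[f(\widetilde A),\widetilde x]=\begin{pmatrix} 0 & f(B)-f(C) \\ 0 & 0\end{pmatrix}.
\]

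The key observation is that for the non-normalized trace on $\widetilde\cM$ one has $\|e_{12}\otimes y\|_{L_p(\widetilde\cM)}=\|y\|_{L_p(\cM)}$ for every $y\in L_p(\cM)$; this follows at once from $|e_{12}\otimes y|^p=e_{22}\otimes(y^*y)^{p/2}$ and the tensorial form of the trace. Applying this identity to the two block matrices above yields the isometric identifications
\[
\|[\widetilde A,\widetilde x]\|_{L_p(\widetilde\cM)}=\|B-C\|_p,
\qquad
\|[f(\widetilde A),\widetilde x]\|_{L_p(\widetilde\cM)}=\|f(B)-f(C)\|_p.
\]
Using the identity $[f(\widetilde A),\widetilde x]=\cI^{\widetilde A}_{f^{[1]}}([\widetilde A,\widetilde x])$ (already exploited in the proof of Corollary \ref{Cor=CommutatorBMO}) and the $L_p$-boundedness of the double operator integral established in Theorem \ref{Thm=DOIbound}, applied in the algebra $\widetilde\cM$, we obtain
\[
\|f(B)-f(C)\|_p=\bigl\|\cI^{\widetilde A}_{f^{[1]}}([\widetilde A,\widetilde x])\bigr\|_{L_p(\widetilde\cM)}\le c_{\mathrm{abs}}\,\frac{p^2}{p-1}\,\|B-C\|_p.
\]

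The only thing requiring care is that the constant in Theorem \ref{Thm=DOIbound} is genuinely absolute, in particular independent of the ambient finite von Neumann algebra; this is indeed the case since the input bounds (the BMO bound from Theorem \ref{bmo boundedness}, the $L_2$ contractivity, and the interpolation of Theorem \ref{Thm=JungeMeiInterpolation}) all produce constants that do not depend on $\cM$. Hence passing from $\cM$ to $\widetilde\cM=M_2\otimes\cM$ is legitimate and costs nothing. I anticipate no serious obstacle beyond this bookkeeping; the only mildly delicate point to flag explicitly is that in the proof of Theorem \ref{Thm=Commutator} the commutator norm on the right side is actually $\|[A,x]\|_p$ (not $\|[A,x]\|_\infty$ as in the stated formulation), so the application above refers to the bound displayed in equation \eqref{Eqn=ComEst}.
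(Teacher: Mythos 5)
Your proof is correct and follows essentially the same block-matrix dilation as the paper: the paper sets $x = e_{12}+e_{21}$ and $A = \operatorname{diag}(B,C)$, computes $\Vert [A,x]\Vert_p = 2^{1/p}\Vert B-C\Vert_p$ and $\Vert [f(A),x]\Vert_p = 2^{1/p}\Vert f(B)-f(C)\Vert_p$ (so the $2^{1/p}$ factors cancel), and then applies Theorem~\ref{Thm=Commutator} — whereas you take $\widetilde x = e_{12}$, obtain exact isometries, and invoke Theorem~\ref{Thm=DOIbound} directly; these are cosmetic variations on one and the same argument. Your concluding observation is accurate and worth flagging: the bound actually established in \eqref{Eqn=ComEst} — and the one the paper's own proof of this corollary invokes — is $\Vert [f(A),x]\Vert_p \le c_{abs}\frac{p^2}{p-1}\Vert [A,x]\Vert_p$, so the $\Vert [A,x]\Vert_\infty$ appearing in the displayed statement of Theorem~\ref{Thm=Commutator} understates what is proven and should read $\Vert [A,x]\Vert_p$.
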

\begin{proof}
Apply Theorem \ref{Thm=Commutator} to
\[x=
\left( \begin{array}{cc}
0 & 1 \\
1 & 0
\end{array}\right), \qquad A =
\left( \begin{array}{cc}
B & 0 \\
0 & C
\end{array} \right).
\]
As
\[[A, x] = \left( \begin{array}{cc}
0 & B-C \\
C-B & 0
\end{array} \right),
\]
we find  $\Vert [A, x] \Vert_p =   2^{\frac{1}{p}} \Vert B - C \Vert_p$ and similarly  $\Vert [f(A), x] \Vert_p = 2^{\frac{1}{p}} \Vert f(B) - f(C) \Vert_p$.

Theorem \ref{Thm=Commutator} gives
\[2^{\frac{1}{p}}\Vert f(B) - f(C) \Vert_p = \Vert [f(A), x] \Vert_p \leq c_{abs} \frac{p^2}{p-1}  \Vert [A, x] \Vert_p = 2^{\frac{1}{p}} c_{abs} \frac{p^2}{p-1} \Vert B - C \Vert_p.\]
\end{proof}

As another  corollary we get a proof of the Aleksandrov-Peller results in \cite[Theorem 11.4]{AleksandrovPeller}.

\begin{thm}\label{Thm=LogEstimate}
There exists a constant $c_{abs}$ such that for any two self-adjoint operators $A, B \in \cB(\mathbb{C}^n)$ and any Lipschitz function $f: \mathbb{R} \rightarrow \mathbb{C}$ we have
\[\Vert f(B) - f(C) \Vert_\infty \leq C_{abs} (1 + \log(n)) \Vert B - C \Vert_\infty.\]
\end{thm}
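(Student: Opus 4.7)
The plan is to deduce the logarithmic bound from the $L_p$--Lipschitz estimate of Corollary~\ref{Cor=LipEst} by comparing $L_p$ and operator norms on $M_n$ and optimizing $p$. Throughout let $\cM = B(\mathbb{C}^n) = M_n$ equipped with the (unnormalized) standard trace $\mathrm{Tr}$, so that $\|X\|_p = (\mathrm{Tr}|X|^p)^{1/p}$ and $\|X\|_\infty$ is the usual operator norm.

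First I would reduce to the case $f:\mathbb{R}\to\mathbb{R}$ with $\|f'\|_\infty \leq 1$. Replacing $f$ by $f/\|f'\|_\infty$ it suffices to prove the inequality with $\|f'\|_\infty = 1$; and for $\mathbb{C}$--valued Lipschitz $f$ one splits $f = f_1 + i f_2$ into real and imaginary parts, each Lipschitz with constant $\leq \|f'\|_\infty$, and uses the triangle inequality. Hence I may apply Corollary~\ref{Cor=LipEst} directly.

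Next I recall the elementary bounds on $M_n$:
\[
\|X\|_\infty \leq \|X\|_p \leq n^{1/p}\|X\|_\infty, \qquad 1 \leq p < \infty.
\]
Combining these with Corollary~\ref{Cor=LipEst} applied to the self-adjoint operators $B,C \in M_n$ gives, for any $1 < p < \infty$,
\[
\|f(B) - f(C)\|_\infty \leq \|f(B) - f(C)\|_p \leq c_{abs}\,\frac{p^2}{p-1}\,\|B - C\|_p \leq c_{abs}\,\frac{p^2}{p-1}\, n^{1/p}\,\|B - C\|_\infty.
\]

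The remaining step, which is the only genuine calculation, is to optimize this bound in $p$. Choosing $p = \max(2, \log n)$ one has $n^{1/p} \leq e$ and $p^2/(p-1) \leq 2p \leq 2(\log n + 2)$, which gives
\[
\|f(B) - f(C)\|_\infty \leq C_{abs}(1 + \log n)\,\|B - C\|_\infty
\]
for an absolute constant $C_{abs}$. There is no conceptual obstacle here; the only point worth flagging is to check that Corollary~\ref{Cor=LipEst} is used with the same trace used in the norm comparison (the unnormalized trace on $M_n$), so that the factor $n^{1/p}$ in the inclusion $L_\infty \hookrightarrow L_p$ is the correct one that collapses to a constant at $p \simeq \log n$.
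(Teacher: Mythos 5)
Your proof is correct and follows essentially the same route as the paper's: reduce to the $L_p$-Lipschitz estimate of Corollary~\ref{Cor=LipEst}, compare the Schatten-$p$ norm with the operator norm on $M_n$, and optimize at $p \approx \log n$. Your version is in fact slightly cleaner than the paper's in two small respects: you use the sharp inclusion bound $n^{1/p}$ rather than the weaker $n^{2/p}$ the paper writes, and by taking $p = \max(2,\log n)$ you avoid the issue that for small $n$ one has $\log n < 2$, where the factor $p^2/(p-1)$ is not controlled by $2p$; you also explicitly record the reduction from $\mathbb{C}$-valued to $\mathbb{R}$-valued $f$, which the paper leaves implicit.
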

\begin{proof}
Let $\cS_p^n$ be the Schatten class associated with $\cB(\mathbb{C}^n)$. We have that $\cB(\mathbb{C}^n) \subseteq \cS_p^n$ contractively. The converse inclusion $\cS_p^n \subseteq \cB(\mathbb{C}^n)$  has norm at most $n^{\frac{2}{p}}$ by complex interpolation between $p = 1$ and $p = \infty$.

We find that for $\log(2) \leq p < \infty$,
\[\begin{split}
\Vert f(B) - f(C) \Vert_\infty \leq      \Vert f(B) - f(C) \Vert_p\leq c_{abs} \: p  \Vert B - C \Vert_p \leq c_{abs} p n^{\frac{2}{p}} \Vert B - C \Vert_\infty.
\end{split}\]
Now for $n \geq 2$ take $p = \log(n)$ so that we get,
\[\Vert f(B) - f(C) \Vert_\infty \leq c_{abs} \log(n)  e^{\frac{2}{p} \log(n)}  \Vert B - C \Vert_\infty= c_{abs} \log(n)  e^{2}  \Vert B - C \Vert_\infty\]
This yields the theorem as in case $n = 1$ it is obvious.
\end{proof}

\section{Estimates for vector-valued double operator integrals}\label{Sect=VectTrans}

 \subsection{Assumptions and statements}

 As before we let $(\cM, \tau)$ be a finite von Neumann algebra. Throughout the entire section we fix a finite von Neumann algebra $\cN$ whose trace shall not be used explicitly. We write $\cS^n$ for the Heat semi-group on $\mathbb{R}^n$ to stipulate the dimension. We let $C(\mathbb{R}^n, \cN)$ be the space of norm continuous functions $\mathbb{R}^n \rightarrow \cN$.

 Let ${\bf A}=(A_1,\ldots,A_n)$ be an $n$-tuple of commuting self-adjoint elements in $\cM.$ Let $E^{{\bf A}}:\mathbb{R}^n \rightarrow \cM$ be their joint spectral measure.

 \begin{dfn}\label{ja semigroup}
 	The semi-group $\mathcal{J}^{{\bf A}}:\cM\otimes M_2(\mathbb{C})\to \cM\otimes M_2(\mathbb{C})$ is defined by the formula
 	$$\mathcal{J}^{{\bf A}}_t(x\otimes e_{ij})=\cI_{e^{-tF_{ij}}}^{{\bf A}}(x)\otimes e_{ij},\quad x\in\cM,$$
 	where
 	$$F_{ij}(\lambda,\mu)=
 	\begin{cases}
 	|\lambda-\mu|^2,& i=j,\\
 	|\lambda|^2+|\mu|^2,& i\neq j.
 	\end{cases}
 	$$
 \end{dfn}

 Let $L:L_{\infty}(\mathbb{R}^n,\cN)\to L_{\infty}(\mathbb{R}^{2n},\cN)$ be defined by the formula
 \begin{equation}\label{Eqn=Lmap}
 (Lh)(t,s)=\int_0^1h(\theta s+(1-\theta)t)d\theta.
 \end{equation}
  We need the following H\"ormander-Mikhlin-type condition.

 \begin{cond}\label{hormikh} The function $h\in C(\mathbb{R}^n,\cN)$ is a compactly supported   $C^{n+2}$-function  such that
 	\begin{equation}\label{Eqn=SmoothEnough}
 	\|h\|_{HM_n}\stackrel{{\rm def}}{=}\max_{0\leq|\alpha|\leq n+2}\sup_{t\neq 0} \Vert t\Vert_2^{|\alpha|}\Vert (\partial_\alpha h)(t)\Vert_{\cN} \leq 1.
 	\end{equation}
 \end{cond}

The following Theorem \ref{vector doi thm} is the main result we prove in this sections from which we derive vector valued commutator estimates.

 \begin{thm}\label{vector doi thm} Let ${\bf A}=(A_1,\ldots,A_n)$ be an $n$-tuple of commuting self-adjoint elements in $\cM.$ The following statements hold.
 	\begin{enumerate}[{\rm (i)}]
 		\item\label{vectordoii} The semi-group $\mathcal{J}^{{\bf A}}$ in Definition \ref{ja semigroup} is Markov.
 		\item\label{vectordoiii} If $h\in C(\mathbb{R}^n,\cN)$ satisfies the Condition \ref{hormikh}   and $h(0) = 0$, then
 		$$\Big\|\cI_{Lh}^{{\bf A}}(x)\otimes e_{12}\Big\|_{\BMO(\cN \otimes \cM\otimes M_2(\mathbb{C}),{\rm id}_{\cN}\otimes\mathcal{J}^{{\bf A}})}\leq c_{n}   \|x\|_{\cM},\quad x\in\cM.$$
 	\end{enumerate}
 \end{thm}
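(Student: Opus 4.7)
The plan is to adapt the scalar transference argument of Lemma \ref{transference lemma} to a $2\times 2$-matrix setting, tuned so that conjugation by a block unitary transfers $\cJ^{\boldA}$ onto the heat semi-group $\cS^{2n}$ on $\mathbb{R}^{2n}$. Assuming first that $\boldA$ has finite joint spectrum with joint projections $\{p_\lambda\}_{\lambda\in\sigma(\boldA)}$, I would introduce in $L_\infty(\mathbb{R}^{2n})\otimes\cM$ the unitaries
\[
U_1^{\boldA}=\sum_\lambda e_{(\lambda,0)}\otimes p_\lambda, \qquad U_2^{\boldA}=\sum_\mu e_{(0,\mu)}\otimes p_\mu,
\]
set $W^{\boldA}=U_1^{\boldA}\otimes e_{11}+U_2^{\boldA}\otimes e_{22}$, and define the $\ast$-monomorphism
\[
\varphi_{\boldA}:\cM\otimes M_2(\mathbb{C})\to L_\infty(\mathbb{R}^{2n})\otimes\cM\otimes M_2(\mathbb{C}),\qquad \varphi_{\boldA}(y)=W^{\boldA}(1\otimes y)(W^{\boldA})^{\ast}.
\]
A direct computation in the spirit of Lemma \ref{transference lemma} gives $\varphi_{\boldA}(x\otimes e_{ij})=\sum_{\lambda,\mu}e_{\omega_{ij}(\lambda,\mu)}\otimes p_\lambda xp_\mu\otimes e_{ij}$ with $\omega_{11}(\lambda,\mu)=(\lambda-\mu,0)$, $\omega_{22}(\lambda,\mu)=(0,\lambda-\mu)$, $\omega_{12}(\lambda,\mu)=(\lambda,-\mu)$, $\omega_{21}(\lambda,\mu)=(-\mu,\lambda)$, so that applying the heat multiplier $e^{t\Delta_{2n}}$ produces precisely $e^{-tF_{ij}(\lambda,\mu)}$ in each block, yielding the intertwining
\[
\varphi_{\boldA}\circ \cJ_t^{\boldA}=(e^{t\Delta_{2n}}\otimes\id_\cM\otimes\id_{M_2(\mathbb{C})})\circ\varphi_{\boldA}.
\]

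For (i), I would pull back the Markov properties of $\cS^{2n}$ through the $\ast$-embedding $\varphi_{\boldA}$: unitality, complete positivity and continuity in measure are immediate consequences of the intertwining, while trace preservation is automatic on the off-diagonal blocks (whose trace in $\cM\otimes M_2(\mathbb{C})$ vanishes) and reduces to the standard diagonal case on the $e_{ii}$-blocks. The passage from finite to arbitrary joint spectrum will then go through the dyadic approximation $A_k\mapsto l^{-1}\lfloor lA_k\rfloor$, combined with Lemma \ref{Lem=ConvDOI} and the Markov stability Lemma \ref{approximate markov fact}.

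For (ii), I would tensor the transference by $\id_\cN$ to obtain an isometric BMO embedding
\[
\id_\cN\otimes\varphi_{\boldA}:\BMO(\cN\otimes\cM\otimes M_2(\mathbb{C}),\id_\cN\otimes\cJ^{\boldA})\hookrightarrow \BMO(L_\infty(\mathbb{R}^{2n})\otimes\cN\otimes\cM\otimes M_2(\mathbb{C}),\cS^{2n}\otimes\id).
\]
Unpacking the $e_{12}$-block yields
\[
(\id_\cN\otimes\varphi_{\boldA})(\cI_{Lh}^{\boldA}(x)\otimes e_{12})=\sum_{\lambda,\mu}(Lh)(\lambda,\mu)\otimes e_{(\lambda,-\mu)}\otimes p_\lambda xp_\mu\otimes e_{12},
\]
which coincides with $(\widetilde h(\nabla_{2n})\otimes\id)(\varphi_{\boldA}(x\otimes e_{12}))$ for the $\cN$-valued Fourier symbol $\widetilde h(\xi_1,\xi_2):=(Lh)(\xi_1,-\xi_2)$. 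Since $\|\varphi_{\boldA}(x\otimes e_{12})\|_\infty=\|x\|_{\cM}$, (ii) reduces to proving the $\cN$-valued heat-BMO multiplier bound
\[
\|\widetilde h(\nabla_{2n})\otimes\id\|_{L_\infty\to \BMO_{\cS^{2n}\otimes \id}}\leq c_n.
\]

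The main obstacle will be this last multiplier estimate. Since $\widetilde h$ is neither homogeneous nor globally smooth on $\mathbb{R}^{2n}\setminus\{0\}$, Proposition \ref{Prop=CBMulti} is not applicable verbatim, and one has to invoke the more general non-commutative H\"ormander-Mikhlin framework of \cite{JMP} directly. Differentiation under the integral \eqref{Eqn=Lmap} gives
\[
\partial^{(\alpha,\beta)}\widetilde h(\xi_1,\xi_2)=\int_0^1(1-\theta)^{|\alpha|}(-\theta)^{|\beta|}(\partial^{\alpha+\beta}h)(\theta(-\xi_2)+(1-\theta)\xi_1)\,d\theta,
\]
so the Mikhlin-type bounds \eqref{Eqn=SmoothEnough} on $h$, together with the vanishing $\widetilde h(0)=0$ coming from $h(0)=0$ and the compact support of $h$, should after a careful Calder\'on-Zygmund estimate on the convolution kernel $\widehat{\widetilde h}$ be strong enough to invoke the vector-valued version of the non-commutative H\"ormander-Mikhlin theorem, producing the constant $c_n=c_n\|h\|_{HM_n}$ and completing the proof.
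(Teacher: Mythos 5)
Your transference framework is essentially the paper's (the paper's $U,V,W,\pi_{\boldA}$ and Proposition~\ref{transference vector} are the same construction, with the minor cosmetic difference that the paper uses $e_{-s}$ in $V$ so the transferred symbol is $Lh$ itself rather than $\widetilde h(\xi_1,\xi_2)=(Lh)(\xi_1,-\xi_2)$), and your argument for the Markov property (i) — complete positivity pulled back through the $\ast$-embedding, trace preservation block-by-block, discretization via $\frac1l\lfloor lA_k\rfloor$ and Lemma~\ref{approximate markov fact} — matches Lemma~\ref{vector doi finite spectrum} and the subsequent passage to general $\boldA$.

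The gap is in the multiplier estimate for $Lh$ (equivalently $\widetilde h$), which you leave as a ``careful Calder\'on--Zygmund estimate.'' Your proposed route — differentiating under the integral sign and hoping the Mikhlin bounds on $h$ propagate to $\widetilde h$ on $\mathbb{R}^{2n}\setminus\{0\}$ — does not work. The pointwise bound you would need is $\Vert(\xi_1,\xi_2)\Vert_2^{|\alpha|+|\beta|}\,\Vert\partial^{(\alpha,\beta)}\widetilde h(\xi_1,\xi_2)\Vert\leq c$, but differentiation under the integral gives
\[
\Vert\partial^{(\alpha,\beta)}\widetilde h(\xi_1,\xi_2)\Vert \;\leq\; \int_0^1 \big\Vert\theta(-\xi_2)+(1-\theta)\xi_1\big\Vert_2^{-|\alpha|-|\beta|}\,d\theta,
\]
and along the diagonal $\xi_1=-\xi_2=\xi\neq 0$ the argument $\theta(-\xi_2)+(1-\theta)\xi_1=(1-2\theta)\xi$ vanishes at $\theta=\tfrac12$, so this integral diverges as soon as $|\alpha|+|\beta|\geq 1$ even though $\Vert(\xi_1,\xi_2)\Vert_2$ stays bounded away from $0$. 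So $\widetilde h$ (and $Lh$) genuinely fails the H\"ormander--Mikhlin condition on $\mathbb{R}^{2n}$, and there is no reason to expect the associated kernel to satisfy~\eqref{Eqn=SmoothnessCondition}.

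The paper's Proposition~\ref{Prop=HormanderMikhlin2} circumvents this by exchanging the order of the estimate and the $\theta$-integral: write $(Lh)(\nabla_{\mathbb{R}^{2n}})=\int_0^1 h_\theta(\nabla_{\mathbb{R}^{2n}})\,d\theta$ with $h_\theta(t,s)=h(\theta s+(1-\theta)t)$, push the BMO-norm inside by the triangle inequality, and then observe that each fixed $h_\theta$ depends on $(t,s)$ only through the single linear combination $\theta s+(1-\theta)t$. By rotation invariance of $\cS^{2n}$, the operator $h_\theta(\nabla_{\mathbb{R}^{2n}})$ is unitarily equivalent to $g_\theta(\nabla_{\mathbb{R}^n})\otimes\id_{L_\infty(\mathbb{R}^n)}$ with $g_\theta(t)=h(t\sqrt{\theta^2+(1-\theta)^2})$, which does satisfy Condition~\ref{hormikh} uniformly in $\theta$, so Proposition~\ref{Prop=HormanderMikhlin1} applies. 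You would need to replace your final paragraph with this decomposition–rotation argument; as written, the multiplier bound has no proof.
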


 \subsection{Transference of multipliers}

 Let again $e_s \in L_\infty(\mathbb{R}^n)$ be given by $e_s(t) = e^{i \langle s, t \rangle}.$
When ${\bf A}$ has finite spectrum, define unitary operators $U,V\in L_{\infty}(\mathbb{R}^n)\otimes L_{\infty}(\mathbb{R}^n)\otimes\cM$ by the formula
 $$U=\int_{\mathbb{R}^n}e_s\otimes 1_{L_{\infty}(\mathbb{R}^n)}\otimes dE^{{\bf A}}(s),\quad V=\int_{\mathbb{R}^n}1_{L_{\infty}(\mathbb{R}^n)}\otimes e_{-s}\otimes dE^{{\bf A}}(s).$$
 Set
 $$W=U\otimes e_{11}+V\otimes e_{22},$$
 and further
 $$\pi_{{\bf A}}(x)=W (1_{L_{\infty}(\mathbb{R}^n)}\otimes 1_{L_{\infty}(\mathbb{R}^n)}\otimes x)  W^{\ast},\quad x\in\cM\otimes M_2(\mathbb{C}).$$
The map
\[
{\rm corner}:\cM\to\cM\otimes M_2(\mathbb{C})
\]
is defined by the  formula $x\to x\otimes e_{12}.$

\begin{prop}\label{transference vector} Let ${\bf A}=(A_1,\ldots,A_n)$ be an $n$-tuple of commuting self-adjoint elements in $\cM$ with finite spectrum.
\begin{enumerate}[{\rm (i)}]
\item for every $t\geq0,$ we have
$$(S_t^{2n}\otimes {\rm id}_{\cM\otimes M_2(\mathbb{C})})\circ \pi_{{\bf A}}=\pi_{{\bf A}}\circ\mathcal{J}_t^{{\bf A}}.$$
\item for every $k\in C(\mathbb{R}^{2n},\cN),$ we have
$$(k(\nabla_{\mathbb{R}^{2n}})\otimes {\rm id}_{\cM\otimes M_2(\mathbb{C})})\circ \pi_{{\bf A}}\circ{\rm corner}=({\rm id}_{\cN}\otimes\pi_{{\bf A}})\circ({\rm id}_{\cN}\otimes{\rm corner})\circ\mathcal{I}_k^{{\bf A}}.$$
\end{enumerate}
\end{prop}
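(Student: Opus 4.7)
The plan is to verify both identities by a direct computation on characters/matrix units, exploiting the fact that since ${\bf A}$ has finite spectrum, all the spectral integrals defining $U, V, \pi_{{\bf A}}$ and the double operator integrals collapse to finite sums. So the computation is elementary; it is just a matter of carefully tracking which characters of $\mathbb{R}^{2n}$ appear.

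First I would unpack $\pi_{{\bf A}}$ on matrix units. Since $W = U\otimes e_{11} + V\otimes e_{22}$ is a unitary whose only nonzero ``block entries'' are $U$ in the $(1,1)$ slot and $V$ in the $(2,2)$ slot, for $x \in \cM$ we get
\[
\pi_{{\bf A}}(x\otimes e_{ij}) \;=\; W_i(1\otimes 1 \otimes x) W_j^{\ast}\otimes e_{ij}, \qquad (W_1,W_2)=(U,V).
\]
Substituting the definitions and using $e_s^\ast = e_{-s}$, this yields the four explicit formulas
\[
\pi_{{\bf A}}(x\otimes e_{11}) = \iint e_{s-s'}\otimes 1 \otimes dE^{{\bf A}}(s)\,x\,dE^{{\bf A}}(s')\otimes e_{11},
\]
\[
\pi_{{\bf A}}(x\otimes e_{22}) = \iint 1\otimes e_{s'-s} \otimes dE^{{\bf A}}(s)\,x\,dE^{{\bf A}}(s')\otimes e_{22},
\]
\[
\pi_{{\bf A}}(x\otimes e_{12}) = \iint e_s\otimes e_{s'} \otimes dE^{{\bf A}}(s)\,x\,dE^{{\bf A}}(s')\otimes e_{12},
\]
and symmetrically for $e_{21}$. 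These formulas are the only thing that does real work in the proof.

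For part (i), I would use that the Heat semi-group acts diagonally on characters of $\mathbb{R}^{2n}$: for $(u,v)\in\mathbb{R}^n\times\mathbb{R}^n$,
\[
(S_t^{2n}\otimes \id)(e_u\otimes e_v \otimes y) = e^{-t(\|u\|_2^2+\|v\|_2^2)}\, e_u\otimes e_v \otimes y.
\]
Applying this to each of the four expansions above gives exponential factors $e^{-tF_{ij}(s,s')}$ in front of $dE^{{\bf A}}(s)\,x\, dE^{{\bf A}}(s')$, where $F_{ij}$ is exactly the symbol in Definition \ref{ja semigroup}. On the other side, the orthogonality of spectral projections (\textit{i.e.}\ $dE^{{\bf A}}(u) dE^{{\bf A}}(s) = \delta_{u,s}\, dE^{{\bf A}}(s)$, which here is literally a Kronecker delta because the spectrum is finite) immediately yields
\[
\pi_{{\bf A}}(\cI_{e^{-tF_{ij}}}^{{\bf A}}(x)\otimes e_{ij})
= \pi_{{\bf A}}(x\otimes e_{ij})\ \text{with an extra}\ e^{-tF_{ij}(s,s')}\ \text{factor},
\]
matching the left-hand side.

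For part (ii), I would specialize to $x\otimes e_{12}$ and use the formula
\[
\pi_{{\bf A}}(x\otimes e_{12}) = \iint e_s\otimes e_{s'}\otimes dE^{{\bf A}}(s)\,x\,dE^{{\bf A}}(s')\otimes e_{12}.
\]
Since $k\in C(\mathbb{R}^{2n},\cN)$, the vector-valued Fourier multiplier $k(\nabla_{\mathbb{R}^{2n}})\otimes\id$ acts on the character $e_s\otimes e_{s'} \in L_\infty(\mathbb{R}^{2n})$ by left-multiplication by $k(s,s')\in\cN$, yielding
\[
(k(\nabla_{\mathbb{R}^{2n}})\otimes \id)\bigl(\pi_{{\bf A}}(x\otimes e_{12})\bigr)
= \iint k(s,s')\otimes e_s\otimes e_{s'}\otimes dE^{{\bf A}}(s)\,x\,dE^{{\bf A}}(s')\otimes e_{12}.
\]
Then, separately, I would compute the right-hand side by unwinding the vector-valued double operator integral from Section \ref{Sect=DOIvector}: $\mathcal{I}_k^{{\bf A}}(x) = \sum_{s,s'} k(s,s')\otimes p_s x p_{s'}$ in $\cN\otimes \cM$ (finite sum), applying ${\rm corner}$ tensors $e_{12}$ on the right, and finally applying $\id_\cN\otimes \pi_{{\bf A}}$ tensorizes $e_s\otimes e_{s'}$ in front of $p_s x p_{s'}$. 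The two finite sums agree term by term.

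The main obstacle I foresee is simply notational: one has to be disciplined about the ordering of the five tensor legs $L_\infty(\mathbb{R}^n)\otimes L_\infty(\mathbb{R}^n)\otimes \cN\otimes\cM\otimes M_2(\mathbb{C})$ and about the fact that in part (ii) the $\cN$-leg of $k(s,s')$ sits in a different slot before and after applying $\id_\cN\otimes \pi_{{\bf A}}$. Once those bookkeeping issues are settled, both identities reduce to the two elementary facts that $e^{t\Delta}$ is diagonal on characters and that spectral projections of the commuting tuple $\boldA$ are mutually orthogonal.
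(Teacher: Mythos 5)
Your proposal is correct and follows essentially the same route as the paper: unpack $\pi_{{\bf A}}$ explicitly on matrix units, use that $S_t^{2n}$ and $k(\nabla_{\mathbb{R}^{2n}})$ act diagonally on the characters $e_s\otimes e_{s'}$, and match the resulting scalar/operator factors $e^{-tF_{ij}(s,s')}$ and $k(s,s')$ against the double operator integrals $\cI^{\bf A}_{e^{-tF_{ij}}}$ and $\cI^{\bf A}_k$. The only cosmetic difference is that you write out all four matrix-unit formulas whereas the paper treats $e_{11}$ and $e_{12}$ and declares the other two ``mutatis mutandis.''
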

\begin{proof} If $x\in\cM,$ then
\[\begin{split}
\pi_{{\bf A}}(x\otimes e_{11})= & U x U^\ast \otimes e_{11}= \int_{\mathbb{R}^n}\int_{\mathbb{R}^n}e_{s-u}\otimes 1_{L_{\infty}(\mathbb{R}^n)}\otimes dE^{{\bf A}}(s)xdE^{{\bf A}}(u)\otimes e_{11}.
\end{split}
\]
Therefore,
\[\begin{split}
& \Big(S_t^{2n}\otimes {\rm id}_{\cM\otimes M_2(\mathbb{C})}\Big)(\pi_{{\bf A}}(x\otimes e_{11}))\\
= &\int_{\mathbb{R}^n}\int_{\mathbb{R}^{n}}e^{-t|s-u|^2}e_{s-u}\otimes 1_{L_{\infty}(\mathbb{R}^n)}\otimes dE^{{\bf A}}(s)xdE^{{\bf A}}(u)\otimes e_{11} \\
= & \pi_{{\bf A}}\Big(\int_{\mathbb{R}^n}\int_{\mathbb{R}^n}e^{-t|s-u|^2}dE^{{\bf A}}(s)xdE^{{\bf A}}(t)\otimes e_{11}\Big)\\
=&(\pi_{{\bf A}}\circ\mathcal{J}_t^{{\bf A}})(x\otimes e_{11}).
\end{split}\]
Also,
$$\pi_{{\bf A}}(x\otimes e_{12})= U x V^\ast \otimes e_{12} = \int_{\mathbb{R}^n}\int_{\mathbb{R}^n}e_s\otimes e_t\otimes dE^{{\bf A}}(s)xdE^{{\bf A}}(t)\otimes e_{12}.$$
Therefore,
\[\begin{split}
& \Big(S_t^{2n}\otimes {\rm id}_{\cM\otimes M_2(\mathbb{C})}\Big)(\pi_{{\bf A}}(x\otimes e_{12})) \\
= &\int_{\mathbb{R}^n}\int_{\mathbb{R}^n}e^{-t|s|^2}e_s\otimes e^{-t|u|^2}e_u\otimes dE^{{\bf A}}(s)xdE^{{\bf A}}(u)\otimes e_{12} \\
= & \pi_{{\bf A}}\Big(\int_{\mathbb{R}^n}\int_{\mathbb{R}^n}e^{-t|s|^2-t|u|^2}dE^{{\bf A}}(s)xdE^{{\bf A}}(u)\otimes e_{12}\Big)\\
= & (\pi_{{\bf A}}\circ\mathcal{J}_t^{{\bf A}})(x\otimes e_{12}).
\end{split}\]
The argument for $x\otimes e_{21}$ and for $x\otimes e_{22}$ goes {\it mutatis mutandi}. This proves the first assertion.
 	
We have,
$$\pi_{{\bf A}}(x\otimes e_{12})=\int_{\mathbb{R}^n}\int_{\mathbb{R}^n}e_s\otimes e_t\otimes dE^{{\bf A}}(s)xdE^{{\bf A}}(t)\otimes e_{12}.$$
Therefore,
\[\begin{split}
& \Big(k(\nabla_{\mathbb{R}^{2n}})\otimes {\rm id}_{\cM\otimes M_2(\mathbb{C})}\Big)(\pi_{{\bf A}}(x\otimes e_{12})) \\
= & \int_{\mathbb{R}^n}\int_{\mathbb{R}^n}k(s,u)\otimes e_s\otimes e_u\otimes dE^{{\bf A}}(s)xdE^{{\bf A}}(u)\otimes e_{12} \\
= & ({\rm id}_{\cN}\otimes\pi_{{\bf A}})\Big(\int_{\mathbb{R}^n}\int_{\mathbb{R}^n}k(s,u)\otimes dE^{{\bf A}}(s)xdE^{{\bf A}}(u)\otimes e_{12}\Big) \\
= & ({\rm id}_{\cN}\otimes\pi_{{\bf A}})\Big(\cI_k^{{\bf A}}(x)\otimes e_{12}\Big).
\end{split}
\]
This proves the second assertion.
\end{proof}

 \subsection{Smooth vector valued multipliers}

 Fix a convolution kernel  $K: \mathbb{R}^n \backslash \{ 0 \} \rightarrow \cN$.
 Assume $K$ determines a convolution operator by the principal value integral,
 \[
 (K \ast g)(x) = \int_{\mathbb{R}^{n}} K(y) g(x-y) dy, \qquad x \in \mathbb{R}^{n},
 \]
 and $g: \mathbb{R}^n \rightarrow \mathbb{C}$ smooth and compactly supported; the domain of $K \ast$ will be extended shortly. We shall also write $K$ for $K\ast$, the convolution operator (Calder\'on-Zygmund operator).
 We say that a function $K: \mathbb{R}^n \backslash \{ 0 \} \rightarrow \cN$ satisfies the H\"ormander-Mikhlin condition if,
 \begin{equation}\label{Eqn=SmoothnessCondition}
 \sup_{x \in \mathbb{R}^{2n}}  \int_{y \in \mathbb{R}^{2n}, \Vert y \Vert_2 > 2 \Vert x \Vert_2} \Vert  K(x-y) - K(y)   \Vert_{\cN}   dy < \infty.
 \end{equation}
 Note that if $h\in C(\mathbb{R}^n,\cN)$ is integrable  it has a Fourier transform $\widehat{h}: \mathbb{R}^n \rightarrow \cN$ that is uniquely determined by  $\omega \circ \widehat{h} = \widehat{\omega \circ h}$ for all $\omega \in \cN_\ast$.
 The following is essentially proved in \cite[Lemma 2.3 and Lemma 3.3]{JMP}; we explain how it can be derived.

 \begin{prop}\label{Prop=HormanderMikhlin1}
 	If $h\in C(\mathbb{R}^n,\cN)$ satisfies Condition \ref{hormikh}   and $h(0) = 0$, then
 	$$h(\nabla_{\mathbb{R}^n})\otimes {\rm id}_{\cM}: L_{\infty}(\mathbb{R}^n)\otimes\cM\to\BMO(\cN\otimes L_{\infty}(\mathbb{R}^n)\otimes\cM,{\rm id}_{\cN}\otimes\mathcal{S}^n\otimes{\rm id}_{\cM})$$
 	and its norm is bounded by an absolute constant $c^n_{abs}$ only depending on the dimension $n$.
 \end{prop}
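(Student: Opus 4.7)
The plan is to reduce this proposition to a direct application of \cite[Lemma 2.3]{JMP} in precisely the same manner as in the proof of Proposition \ref{Prop=CBMulti} above, the only genuine twist being that the symbol $h$ is now $\cN$-valued rather than scalar. Concretely, the Fourier multiplier $h(\nabla_{\mathbb{R}^n})\otimes{\rm id}_{\cM}$ acts as convolution in the $\mathbb{R}^n$-variable against the $\cN$-valued Calder\'on--Zygmund kernel $K:=\widehat{h}$, and the proof amounts to checking the two hypotheses of \cite[Lemma 2.3]{JMP} for this $K$.

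First I would verify the $L_2$-boundedness input (condition (i) of \cite[Lemma 2.3]{JMP}). The case $|\alpha|=0$ in \eqref{Eqn=SmoothnessCondition} yields $\|h(t)\|_{\cN}\leq 1$ uniformly in $t$, so that $h(\nabla_{\mathbb{R}^n})\otimes{\rm id}_{\cM}$ extends to a contraction at the $L_2$-level; this is exactly the boundedness input cited in \cite[Remark 2.4]{JMP}.

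Second I would verify the H\"ormander smoothness on $K=\widehat h$ (condition (ii) of \cite[Lemma 2.3]{JMP}). Condition \ref{hormikh} supplies the pointwise derivative bounds $\|(\partial_\alpha h)(t)\|_{\cN}\leq \|t\|_2^{-|\alpha|}$ for $|\alpha|\leq n+2$ and $t\neq 0$. Applying \cite[p.~75, Theorem 6]{SteinInt} state-wise, i.e.\ to $\omega\circ h$ for each normal state $\omega\in\cN_{\ast}$, and then taking the supremum over $\omega$, produces the operator-valued cancellation estimate
\[
\sup_{x\in\mathbb{R}^n}\int_{\|y\|_2\geq 2\|x\|_2}\|K(x-y)-K(y)\|_{\cN}\, dy\leq c_n,
\]
with $c_n$ depending only on $n$. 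This is the form of the smoothness condition used in \cite[Lemma 2.3]{JMP} and spelled out more carefully in \cite[Lemma 3.3]{JMP}.

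With both conditions in place, \cite[Lemma 2.3]{JMP} directly gives the column BMO bound
\[
\|(h(\nabla_{\mathbb{R}^n})\otimes {\rm id}_{\cM})(f)\|_{\BMO^c(\cN\otimes L_\infty(\mathbb{R}^n)\otimes\cM,\, {\rm id}_{\cN}\otimes\cS^n\otimes{\rm id}_{\cM})}\leq c^n_{abs}\|f\|_\infty
\]
for trigonometric $f\in L_\infty(\mathbb{R}^n)\otimes\cM$. The row estimate is then obtained by running the same argument with $h$ replaced by $h^{\vee}(\xi):=h(-\xi)^{\ast}$, which again satisfies Condition \ref{hormikh} with the same constant, exactly as in the last paragraph of the proof of Proposition \ref{Prop=CBMulti}. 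Combining the two gives the full BMO bound, while the hypothesis $h(0)=0$ ensures that $(h(\nabla_{\mathbb{R}^n})\otimes{\rm id}_{\cM})(f)$ lies in the circle subspace on which the BMO-norm is well-defined. The only point genuinely requiring care is the passage from the pointwise derivative bounds of Condition \ref{hormikh} to the operator-valued H\"ormander cancellation estimate above; but because Condition \ref{hormikh} is already formulated in $\cN$-norm, the classical scalar argument of Stein adapts verbatim, and this is precisely the content of \cite[Lemma 3.3]{JMP}.
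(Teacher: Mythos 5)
Your proof is correct and follows essentially the same route as the paper: check the two hypotheses of \cite[Lemma 2.3]{JMP} by applying the classical scalar Calder\'on--Zygmund estimate state-wise to $\omega\circ h$ for $\omega$ in the unit ball of $\cN_\ast$ and taking a supremum, then get the row bound from $h^\vee(\xi)=h(-\xi)^\ast$ and use $h(0)=0$ to land in the circle subspace. The only cosmetic difference is that the paper phrases the state-wise step as deducing the gradient bound $\Vert(\nabla K)(s)\Vert_{\cN}\leq c_{abs}^n\Vert s\Vert^{-n-1}$ via \cite[Lemma 3.3]{JMP} before passing to the H\"ormander cancellation integral, whereas you cite \cite[p.~75, Theorem 6]{SteinInt} directly for the cancellation estimate --- but as you yourself note, this is exactly the content of \cite[Lemma 3.3]{JMP}, so the two are the same argument.
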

 \begin{proof}
 Let $\omega \in \cN_{\ast}$ with $\Vert \omega \Vert = 1$ then $h^\omega  := \omega \circ h$ is a $C^{n+2}$-function whose associated  Fourier transform  is given by $K^\omega = \omega \circ K$.  We still have $\sup_{\vert \alpha \vert \leq n +2} \Vert \xi \Vert_2^{\vert \alpha \vert} \Vert  \partial_\alpha \omega \circ h \Vert_{\cN} \leq 1$.
 	The proof  of \cite[Lemma 3.3]{JMP} (more precisely, the statement in its first line)   shows that we have the gradient estimate,
 	$\vert \Vert s \Vert^{+n+1} (\nabla K^\omega)(s) \vert \leq   c_{abs}^n,  s \in \mathbb{R}^{n} \backslash \{ 0 \}$,
 	for an absolute constant $c_{abs}^n$    indepenedent of $\omega$.  Taking the supremum over all $\omega$ in the unit ball of $\cN_\ast$ concludes that in fact $  \Vert (\nabla K)(s) \Vert_{\cN}  \leq c_{abs}^n  \Vert s \Vert^{-n-1},   s \in \mathbb{R}^{n} \backslash \{ 0 \}$.
 	So that certainly \eqref{Eqn=SmoothnessCondition} holds. So \cite[Lemma 2.3, Condition (ii)]{JMP} 	is fulfilled. Further  \cite[Lemma 2.3, Condition (i)]{JMP} is satisfied by Remark \cite[Remark 2.4]{JMP}. Hence,   \cite[Lemma 2.3]{JMP} gives the result for  the column estimate. The row estimate follows by taking adjoints.   Note that as in Proposition \ref{Prop=CBMulti} the condition $h(0) = 0$ guarantees that $h(\nabla_{\mathbb{R}^n})(1) = 0$ so that $h(\nabla_{\mathbb{R}^n})(f) \in \cN \odot L_\infty^\circ(\mathbb{R}^n)$ with $f \in L_\infty^\circ(\mathbb{R}^n)$ trigonometric.
 \end{proof}

 \begin{prop}\label{Prop=HormanderMikhlin2}
 	If $h\in C(\mathbb{R}^n,\cN)$ satisfies Condition \ref{hormikh}  and $h(0) = 0$, then for $y\in L_{\infty}(\mathbb{R}^{2n}) \otimes \cM$,
 	\[
 	\begin{split}
 	& \Big\|\Big((Lh)(\nabla_{\mathbb{R}^{2n}})\otimes {\rm id}_{\cM_2}\Big)(y)\Big\|_{\BMO(\cN \otimes L_{\infty}(\mathbb{R}^{2n}) \otimes \cM,{\rm id}_{\cN}\otimes\mathcal{S}^{2n}\otimes{\rm id}_{\cM})}\\
 	& \qquad \qquad \qquad \qquad \leq     c_{n}   \|y\|_{L_{\infty}(\mathbb{R}^{2n}) \otimes \cM},
 	\end{split}
 	\]
 \end{prop}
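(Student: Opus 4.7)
The plan is to apply Proposition~\ref{Prop=HormanderMikhlin1} in dimension $2n$ to the $\cN$-valued function $H := Lh$ on $\mathbb{R}^{2n}$. Since $h(0)=0$ we have $H(0,0)=0$, so it suffices to verify that $H$ satisfies Condition~\ref{hormikh} on $\mathbb{R}^{2n}$ up to a constant depending only on $n$, and then to invoke the previous proposition.

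Differentiating \eqref{Eqn=Lmap} under the integral, for any multi-index $\alpha=(\alpha',\alpha'')\in\mathbb{N}^{n}\times\mathbb{N}^{n}$ one obtains
\[
\partial^{\alpha} H(t,s)=\int_{0}^{1}\theta^{|\alpha''|}(1-\theta)^{|\alpha'|}(\partial^{\alpha'+\alpha''}h)(\theta s+(1-\theta)t)\,d\theta,
\]
where $\alpha'+\alpha''$ denotes the componentwise sum. The heart of the proof is to establish the pointwise bound
\[
\|(t,s)\|_{2}^{|\alpha|}\,\|\partial^{\alpha}H(t,s)\|_{\cN}\le c_{n},\qquad (t,s)\in\mathbb{R}^{2n}\setminus\{0\},
\]
for every multi-index $\alpha$ of the order required by Condition~\ref{hormikh} in dimension $2n$.

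The main obstacle is that $\|\theta s+(1-\theta)t\|_{2}$ can be far smaller than $\|(t,s)\|_{2}$, namely when the segment $[t,s]$ passes close to the origin (take $s\approx -t$), so that the scaling bound $\|\partial^{\beta}h(u)\|_{\cN}\le\|u\|_{2}^{-|\beta|}$ from Condition~\ref{hormikh} for $h$ alone leads to a divergent $\theta$-integral. The remedy is a splitting argument: parametrize the segment by the foot $\theta^{*}$ of the perpendicular from $0$ to the line through $t$ and $s$, together with the perpendicular distance $d$, so that
\[
\|\theta s+(1-\theta)t\|_{2}^{2}=d^{2}+(\theta-\theta^{*})^{2}\|s-t\|_{2}^{2},
\]
and separate the $\theta$-integral into a near-singular part, where one uses the uniform bound $\|\partial^{\beta}h\|_{\infty}<\infty$ (available from the $C^{n+2}$-regularity and compact support of $h$), and a far part, where the scaling bound of Condition~\ref{hormikh} for $h$ is effective. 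The geometric comparison $\|(t,s)\|_{2}\lesssim\max(\|t\|_{2},\|s\|_{2})\lesssim d+\|s-t\|_{2}$, combined with elementary integral estimates of the form $\int (d^{2}+r^{2})^{-|\alpha|/2}\,dr$, then recombines these pieces to produce the required decay $\|\partial^{\alpha}H(t,s)\|_{\cN}\lesssim\|(t,s)\|_{2}^{-|\alpha|}$.

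With the pointwise bound in hand, Proposition~\ref{Prop=HormanderMikhlin1} applied in dimension $2n$ to the $\cN$-valued symbol $H=Lh$ yields the claimed BMO estimate, with $c_{n}$ absorbing both the absolute constant $c_{abs}^{2n}$ produced by that proposition and the geometric constant arising from the verification of Condition~\ref{hormikh} for $H$.
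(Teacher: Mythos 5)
Your approach is genuinely different from the paper's, and it has a gap that prevents it from working under the stated hypotheses.

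The paper avoids ever treating the symbol $Lh$ on $\mathbb{R}^{2n}$ as a single H\"ormander--Mikhlin symbol. It writes $(Lh)(\nabla_{\mathbb{R}^{2n}})(y)=\int_0^1 h_\theta(\nabla_{\mathbb{R}^{2n}})(y)\,d\theta$ with $h_\theta(t,s)=h(\theta s+(1-\theta)t)$, passes the $\BMO$-norm inside the integral by the triangle inequality, and then for each fixed $\theta$ exploits the degeneracy of $h_\theta$: it depends only on the $n$-dimensional linear combination $\theta s+(1-\theta)t$, so after an orthogonal change of coordinates on $\mathbb{R}^{2n}$ it becomes the tensor symbol $g_\theta\otimes 1$ with $g_\theta(u)=h\bigl(u\sqrt{\theta^2+(1-\theta)^2}\bigr)$ on $\mathbb{R}^n$. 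The $HM_n$-norm is invariant under dilations, so $g_\theta$ satisfies Condition \ref{hormikh} on $\mathbb{R}^n$ with the same bound as $h$, uniformly in $\theta$, and Proposition \ref{Prop=HormanderMikhlin1} is applied in dimension $n$ only.

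Your plan is to apply Proposition \ref{Prop=HormanderMikhlin1} in dimension $2n$ directly to $H=Lh$, and hence to verify Condition \ref{hormikh} for $H$ on $\mathbb{R}^{2n}$. That condition demands $\|(t,s)\|_2^{|\alpha|}\|\partial^\alpha H(t,s)\|_{\cN}\le c_n$ for all multi-indices with $|\alpha|\le 2n+2$, and in particular that $H$ be $C^{2n+2}$. But $\partial^\alpha H$ involves $\partial^{\alpha'+\alpha''}h$ with $|\alpha'+\alpha''|=|\alpha|$, so this forces $h$ to be $C^{2n+2}$ with H\"ormander--Mikhlin bounds up to order $2n+2$. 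Condition \ref{hormikh} only provides $C^{n+2}$ regularity and bounds up to order $n+2$. For $n+2<|\alpha|\le 2n+2$ the derivatives you want to estimate need not exist, so the pointwise bound you are after is not even well posed. This regularity mismatch is exactly what the paper's fixed-$\theta$ rotation trick is designed to circumvent; it is not a cosmetic simplification but the essential step. Separately, the near-singular/far-field splitting you outline is only a heuristic: you would still have to check that the recombination closes for borderline $|\alpha|$ (where the naive far-field integral produces logarithms), and that the near-singular contribution is controlled using the uniform bounds on $\partial^\beta h$ that follow from compact support. But even if those estimates went through, the missing smoothness of $h$ beyond order $n+2$ remains the fundamental obstruction to your route.
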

 \begin{proof} Set $h_{\theta}(t,s)=h(\theta s+(1-\theta)t).$ Set $g_{\theta}(t)=h(t\cdot\sqrt{\theta^2+(1-\theta)^2}).$ By definition, we have
 	$$\Big((Lh)(\nabla_{\mathbb{R}^{2n}})\otimes {\rm id}_{\cM}\Big)(y)=\int_0^1\Big(h_{\theta}(\nabla_{\mathbb{R}^{2n}})\otimes {\rm id}_{\cM}\Big)(y)d\theta,$$
 	where the integral is a Bochner integral in $L_2.$ Therefore, we have
 	\[
 	\begin{split}
 	& \Big\|\Big((Lh)(\nabla_{\mathbb{R}^{2n}})\otimes {\rm id}_{\cM}\Big)(y)\Big\|_{\BMO(\cN \otimes L_{\infty}(\mathbb{R}^{2n}) \otimes \cM,{\rm id}_{\cN}\otimes\mathcal{S}^{2n}\otimes{\rm id}_{\cM})} \\
 	\leq &\int_0^1\Big\|\Big(h_{\theta}(\nabla_{\mathbb{R}^{2n}})\otimes {\rm id}_{\cM}\Big)(y)\Big\|_{\BMO(\cN \otimes L_{\infty}(\mathbb{R}^{2n}) \otimes \cM,{\rm id}_{\cN}\otimes\mathcal{S}^{2n}\otimes{\rm id}_{\cM})}d\theta.
 	\end{split}
 	\]
 	By the rotation invariance, we have
 	\[
 	\begin{split}
 	& \Big\|\Big(h_{\theta}(\nabla_{\mathbb{R}^{2n}})\otimes {\rm id}_{\cM}\Big)(y)\Big\|_{\BMO(\cN \otimes L_{\infty}(\mathbb{R}^{2n}) \otimes \cM,{\rm id}_{\cN}\otimes\mathcal{S}^{2n}\otimes{\rm id}_{\cM})} \\
 	= & \Big\|\Big(g_{\theta}(\nabla_{\mathbb{R}^n})\otimes{\rm id}_{L_{\infty}(\mathbb{R}^n)}\otimes {\rm id}_{\cM}\Big)(y)\Big\|_{\BMO(\cN \otimes L_{\infty}(\mathbb{R}^{2n}) \otimes \cM,{\rm id}_{\cN}\otimes\mathcal{S}^{2n}\otimes{\rm id}_{\cM})}.
 	\end{split}
 	\]
 	By Proposition \ref{Prop=HormanderMikhlin1}, we have
 	\[
 	\begin{split}
 	& \Big\|\Big(g_{\theta}(\nabla_{\mathbb{R}^n})\otimes{\rm id}_{L_{\infty}(\mathbb{R}^n)}\otimes {\rm id}_{\cM}\Big)(y)\Big\|_{\BMO(\cN \otimes L_{\infty}(\mathbb{R}^{2n}) \otimes\cM,{\rm id}_{\cN}\otimes\mathcal{S}^{2n}\otimes{\rm id}_{\cM})} \\
 	\leq &  c_{n} \|y\|_{\cM}= c_{n}   \|y\|_{\cM}.
 	\end{split}
 	\]
 	Combining these inequalities, we complete the proof.
 \end{proof}

 \subsection{Proof of Theorem \ref{vector doi thm}}
 In order to explicitly give the proof of Lemma \ref{vector doi finite spectrum} below we single out the following fact.

 \begin{fact}\label{tensor fact} Let $\cM_1$ and $\cM_2$ be von Neumann algebras. Suppose that
 	\begin{enumerate}[{\rm (i)}]
 		\item $\mathcal{T}^1=(T^1_t)_{t\geq0}$ is a semi-group of positive unital operators on $\cM_1;$
 		\item $\mathcal{T}^2=(T^2_t)_{t\geq0}$ is a semi-group of positive unital operators on $\cM_2;$
 		\item $^{\ast}-$monomorphism $\pi:\cM_1\to\cM_2$ is such that
 		$$T_t^1\circ\pi=\pi\circ T_t^2,\quad t\geq0.$$
 	\end{enumerate}
 	We have
 	\begin{enumerate}[{\rm (a)}]
 		\item If $\mathcal{T}^1$ is completely positive, then so is $\mathcal{T}^2$ and
 		$$({\rm id}_{\cN}\otimes\mathcal{T}^1)\circ({\rm id}_{\cN}\otimes\pi)=({\rm id}_{\cN}\otimes\pi)\circ({\rm id}_{\cN}\otimes\mathcal{T}^2).$$
 		\item If $\mathcal{T}^1$ and $\mathcal{T}^2$ are Markov, then
 		$$\|({\rm id}_{\cN}\otimes\pi)(z)\|_{\BMO(\cN \otimes\cM_1,{\rm id}_{\cN}\otimes\mathcal{T}^1)}=\|z\|_{\BMO(\cN  \otimes \cM_2,{\rm id}_{\cN}\otimes\mathcal{T}^2)},\quad z\in\cN \otimes \cM_2.$$
 	\end{enumerate}
 \end{fact}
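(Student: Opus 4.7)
The plan is to derive both statements directly from the intertwining relation $T_t^1 \circ \pi = \pi \circ T_t^2$ by tensoring with $\id_{\cN}$ and exploiting the fact that every $\ast$-monomorphism of von Neumann algebras is both isometric and order-reflecting onto its image. Tensoring the intertwining relation with $\id_{\cN}$ is purely algebraic and gives, for every $t \geq 0$,
$$
(\id_{\cN} \otimes T_t^1) \circ (\id_{\cN} \otimes \pi) = (\id_{\cN} \otimes \pi) \circ (\id_{\cN} \otimes T_t^2),
$$
which is already half of part (a). Note that the identity in (a) is then immediate from this display.

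For the complete positivity assertion of part (a), I would take $k \geq 1$ and a positive element $z \in M_k \otimes \cN \otimes \cM_2$, apply $\id_{M_k \otimes \cN} \otimes \pi$ to obtain a positive element of $M_k \otimes \cN \otimes \cM_1$ (since $\ast$-homomorphisms are positive), then apply $\id_{M_k \otimes \cN} \otimes T_t^1$, which preserves positivity by hypothesis. The tensored intertwining identifies the result with $(\id_{M_k \otimes \cN} \otimes \pi)\bigl((\id_{M_k \otimes \cN} \otimes T_t^2)(z)\bigr)$. Since $\id_{M_k \otimes \cN} \otimes \pi$ is itself an injective $\ast$-homomorphism, hence order-reflecting, we conclude that $(\id_{M_k \otimes \cN} \otimes T_t^2)(z) \geq 0$, giving complete positivity of $\mathcal{T}^2$.

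For part (b), the key identity is computed as follows: set $y := (\id_{\cN} \otimes \pi)(z)$ for $z \in \cN \otimes \cM_2$. Multiplicativity of the $\ast$-homomorphism $\id_{\cN} \otimes \pi$ yields $y^\ast y = (\id_{\cN} \otimes \pi)(z^\ast z)$, and combining this with the tensored intertwining gives
$$
(\id_{\cN} \otimes T_t^1)(y^\ast y) - (\id_{\cN} \otimes T_t^1)(y)^\ast (\id_{\cN} \otimes T_t^1)(y) = (\id_{\cN} \otimes \pi)\Bigl[(\id_{\cN} \otimes T_t^2)(z^\ast z) - (\id_{\cN} \otimes T_t^2)(z)^\ast (\id_{\cN} \otimes T_t^2)(z)\Bigr].
$$
Because $\id_{\cN} \otimes \pi$ is an isometric $\ast$-monomorphism at the von Neumann algebra tensor product level, the operator norms of the two sides agree. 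Taking square roots and suprema over $t \geq 0$ matches the column BMO seminorms. The row seminorm equality follows by applying the same argument to $z^\ast$ together with $\pi(z)^\ast = \pi(z^\ast)$, and taking maxima yields equality of the full BMO norms.

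The only real obstacle is the routine but important verification that $\id_{\cN} \otimes \pi$ remains an isometric $\ast$-monomorphism after taking von Neumann algebra tensor products; this is standard for normal $\ast$-homomorphisms between finite von Neumann algebras and follows from the uniqueness of the normal tensor product together with faithfulness of $\pi$. With that granted, both parts reduce to direct algebraic manipulations, and no analytic subtleties beyond the isometry property of $\ast$-monomorphisms intervene.
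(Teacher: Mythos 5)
The paper states this as a ``Fact'' and offers no proof, so there is no argument in the paper to compare against; your task is to supply the intended verification, and you have done so correctly. (Note that the paper contains a typo in the direction of $\pi$: for the intertwining $T_t^1 \circ \pi = \pi \circ T_t^2$ to compose we must have $\pi : \cM_2 \to \cM_1$, and indeed this matches the application with $\pi = \pi_{\bf A}$; you silently interpret it this way, which is the right reading.) Your argument is the expected one: tensoring the intertwining with $\id_{\cN}$ is algebraic; complete positivity of $\cT^2$ follows because injective $*$-homomorphisms are order-reflecting; and the BMO-norm equality drops out of multiplicativity of $\id_{\cN}\otimes\pi$ combined with its isometry, applied inside the defining supremum for the column seminorm, with the row case obtained by passing to adjoints. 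The only points worth sharpening slightly are (1) to note that $\id_{\cN}\otimes\pi$ maps the $\circ$-part of $\cN\otimes\cM_2$ into the $\circ$-part of $\cN\otimes\cM_1$ (immediate from the intertwining and normality of $\pi$), so that the BMO seminorms in the displayed identity are being applied to admissible elements on both sides; and (2) the complete-positivity argument only needs $z \in M_k\otimes\cM_2$, not the extra $\cN$-leg, though carrying $\cN$ along costs nothing and is in fact what is needed for the tensored semigroup $\id_{\cN}\otimes\cT^2$ in part (b). Neither point affects the correctness of the proof.
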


 \begin{lem}\label{vector doi finite spectrum} The assertion of Theorem \ref{vector doi thm} holds provided that ${\bf A}$ has finite spectrum.
 \end{lem}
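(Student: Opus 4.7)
\textbf{Proof plan for Lemma \ref{vector doi finite spectrum}.} The strategy is pure transference: Proposition \ref{transference vector} embeds the semi-group $\mathcal{J}^{\mathbf{A}}$ and the vector-valued double operator integral $\mathcal{I}^{\mathbf{A}}_{Lh}$, via the $*$-monomorphism $\pi_{\mathbf{A}}$, into the Heat semi-group $\mathcal{S}^{2n}$ and the Fourier multiplier $(Lh)(\nabla_{\mathbb{R}^{2n}})$ on $L_\infty(\mathbb{R}^{2n})\otimes\cM\otimes M_2(\mathbb{C})$, respectively. Both the target semi-group and the target multiplier have been handled (the Heat semi-group is canonical, and the vector-valued Fourier multiplier is treated in Proposition \ref{Prop=HormanderMikhlin2}), so both assertions follow by pulling back through $\pi_{\mathbf{A}}$.

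For assertion \eqref{vectordoii}, the tensor amplification $\mathcal{S}^{2n}\otimes \mathrm{id}_{\cM\otimes M_2}$ is unital, completely positive, trace-preserving (in the sense appropriate to this semi-finite setting) and continuous in measure. Using the intertwining identity
\[
(S_t^{2n}\otimes\mathrm{id}_{\cM\otimes M_2})\circ \pi_{\mathbf{A}} = \pi_{\mathbf{A}}\circ\mathcal{J}_t^{\mathbf{A}},
\]
together with the fact that $\pi_{\mathbf{A}}$ is an injective $*$-homomorphism, Fact \ref{tensor fact}(a) transfers complete positivity from the codomain back to $\mathcal{J}^{\mathbf{A}}$. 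Unitality is immediate from $\pi_{\mathbf{A}}(1)=1$ and $(S_t^{2n}\otimes\mathrm{id})(1)=1$. Trace-preservation and self-adjointness can be read off directly from the explicit form of $\mathcal{J}_t^{\mathbf{A}}$: on each matrix block it is a double operator integral with symbol $e^{-tF_{ij}}$ symmetric in $(\lambda,\mu)$ (symmetric in the diagonal case $|\lambda-\mu|^2$ and separable in the off-diagonal case $|\lambda|^2+|\mu|^2$). Finally, continuity in measure is inherited from the norm-continuity of $t\mapsto e^{-tF_{ij}}$ and Lemma \ref{approximate markov fact}.

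For assertion \eqref{vectordoiii}, the key chain of equalities is
\[
(\mathrm{id}_{\cN}\otimes\pi_{\mathbf{A}})\bigl(\mathcal{I}^{\mathbf{A}}_{Lh}(x)\otimes e_{12}\bigr)
=\bigl((Lh)(\nabla_{\mathbb{R}^{2n}})\otimes\mathrm{id}_{\cM\otimes M_2}\bigr)\bigl(\pi_{\mathbf{A}}(x\otimes e_{12})\bigr),
\]
which is exactly Proposition \ref{transference vector}(ii). By Fact \ref{tensor fact}(b), the $*$-monomorphism $\pi_{\mathbf{A}}$ and the intertwining of part \eqref{vectordoii} give
\[
\|\mathcal{I}^{\mathbf{A}}_{Lh}(x)\otimes e_{12}\|_{\BMO(\cN\otimes\cM\otimes M_2,\mathrm{id}_{\cN}\otimes \mathcal{J}^{\mathbf{A}})}
=\|(\mathrm{id}_{\cN}\otimes\pi_{\mathbf{A}})(\mathcal{I}^{\mathbf{A}}_{Lh}(x)\otimes e_{12})\|_{\BMO(\cN\otimes L_\infty(\mathbb{R}^{2n})\otimes\cM\otimes M_2,\mathrm{id}_{\cN}\otimes \mathcal{S}^{2n}\otimes\mathrm{id}_{\cM\otimes M_2})}.
\]
Applying Proposition \ref{Prop=HormanderMikhlin2} to the right-hand side (with $y=\pi_{\mathbf{A}}(x\otimes e_{12})$), we bound this by $c_n\|\pi_{\mathbf{A}}(x\otimes e_{12})\|_\infty=c_n\|x\|_{\cM}$ since $\pi_{\mathbf{A}}$ is isometric. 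I expect the only delicate point to be bookkeeping: one must check that $\pi_{\mathbf{A}}(x\otimes e_{12})$ really is trigonometric (so Proposition \ref{Prop=HormanderMikhlin2}, which was established for such symbols, is applicable), and that the image $\mathcal{I}^{\mathbf{A}}_{Lh}(x)\otimes e_{12}$ lies in the column $L_2^\circ$-space for $\mathcal{J}^{\mathbf{A}}$; both reduce to the hypothesis $h(0)=0$, which forces the multiplier to kill constants on the transferred side.
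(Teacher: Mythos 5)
Your proposal matches the paper's proof: the paper likewise sets $\cM_1 = L_\infty(\mathbb{R}^{2n})\otimes\cM\otimes M_2(\mathbb{C})$, $\cM_2 = \cM\otimes M_2(\mathbb{C})$, $\pi = \pi_{\boldA}$, and transfers both the Markov property and the BMO bound through Proposition \ref{transference vector}, Fact \ref{tensor fact}, and Proposition \ref{Prop=HormanderMikhlin2} in exactly the way you describe. The only minor inefficiency is your appeal to Lemma \ref{approximate markov fact} for continuity in measure — with finite spectrum the semi-group acts through finitely many spectral projections, so strong continuity at $0$ is immediate and no approximation lemma is needed (which is what the paper observes).
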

 \begin{proof} Let
 	\[
 	\mathcal{M}_1=L_{\infty}(\mathbb{R}^{2n}) \otimes \cM\otimes M_2(\mathbb{C}),\quad \cM_2=\cM\otimes M_2(\mathbb{C}),
 	\]
 	and
 	\[
 	\mathcal{T}^1=\mathcal{S}^{2n}\otimes{\rm id}_{\cM\otimes M_2(\mathbb{C})},\quad \mathcal{T}^2=\mathcal{J}^{{\bf A}},\quad \pi=\pi_{{\bf A}}.
 	\]
 	Denote for brevity
 	$$y=\pi(x\otimes e_{12}),\quad z=\cI_{Lh}^{{\bf A}}(x)\otimes e_{12}.$$
 	By Lemma \ref{transference vector}, we have
 	$$T_t^1\circ\pi=\pi\circ T_t^2,\quad t\geq0.$$
 	Since $\mathcal{T}^1$ is completely positive semi-group, then, by Fact \ref{tensor fact}, so is $\mathcal{T}^2.$ Since ${\bf A}$ has finite spectrum, it follows immediately that $\mathcal{T}^2$ is symmetric and strongly continuous at $0.$ In other words, $\mathcal{T}^2$ is Markov. This proves the first assertion of Theorem \ref{vector doi thm}.
 	
 	By Proposition \ref{Prop=HormanderMikhlin2} (applied to the algebra $\cM_2$), we have
 	\[
 	\begin{split}
 	& \Big\|\Big((Lh)(\nabla_{\mathbb{R}^{2n}})\otimes {\rm id}_{\cM_2}\Big)(y)\Big\|_{\BMO(\cN \otimes\cM_1,{\rm id}_{\cN}\otimes\mathcal{T}_1)} \\
    \leq & c_{n} \|y\|_{\cM_1}= c_{n} \|x\|_{\cM}.
    \end{split}
    \]
 	By Lemma \ref{transference vector}, we have
 	$$\Big((Lh)(\nabla_{\mathbb{R}^{2n}})\otimes {\rm id}_{\cM_2}\Big)(y)=({\rm id}_{\cN}\otimes\pi)(z).$$
 	Therefore, we have
 	$$\Big\|({\rm id}_{\cN}\otimes\pi)(z)\Big\|_{\BMO(\cN \otimes \cM_1,{\rm id}_{\cN}\otimes\mathcal{T}_1)}\leq c_{n} \|x\|_{\cM}.$$
 	By Fact \ref{tensor fact}, we have
 	$$\|z\|_{\BMO(\cN \otimes\cM_2,{\rm id}_{\cN}\otimes\mathcal{T}_2)}\leq c_{n} \|x\|_{\cM}.$$
 	This proves the second assertion of Theorem \ref{vector doi thm}.
 \end{proof}

 \begin{proof}[Proof of Theorem \ref{vector doi thm}]
 	
 	Suppose now ${\bf A}$ is arbitrary. Set ${\bf A}^l=(\frac1l\lfloor l A_1\rfloor,\cdots,\frac1l\lfloor lA_n\rfloor)$.
 By Lemma \ref{vector doi finite spectrum}, $\mathcal{J}^{{\bf A}^l}$ is Markov. Clearly, $\mathcal{J}^{{\bf A}^l}_t(x)\to \mathcal{J}^{{\bf A}}_t(x)$ in $L_2-$norm and  hence in measure  for $x\in\cM$ as $l\to\infty.$ By Lemma \ref{approximate markov fact}  $\mathcal{J}^{{\bf A}}$ is also Markov. This proves the first assertion.
 	
 	We briefly sketch the proof of the second assertion. By Lemma \ref{vector doi finite spectrum}, we have
 	\[
 	\Big\|\cI_{Lh}^{{\bf A}^l}(x)\otimes e_{12}\Big\|_{\BMO(\cN \otimes \cM\otimes M_2(\mathbb{C}),{\rm id}_{\cN}\otimes\mathcal{J}^{{\bf A}^l})}\leq c_{n} \|x\|_{\cM},\quad x\in\cM.
 	\]
 	In other words, we have
 	$$-(c_{n}  \|x\|_{\cM})^2\leq B_l(t)\leq (c_{n} \|x\|_{\cM})^2,$$
 	where
 	$$B_l(t)\stackrel{{\rm def}}{=}\cI^{{\bf A}^l}_{e^{-tF_{22}}}\Big(\cI_{Lh}^{{\bf A}^l}(x)^{\ast}\cI_{Lh}^{{\bf A}^l}(x)\Big)-\Big(\cI^{{\bf A}^l}_{e^{-tF_{12}}}(\cI_{Lh}^{{\bf A}^l}(x))\Big)^{\ast}\Big(\cI^{{\bf A}^l}_{e^{-tF_{12}}}(\cI_{Lh}^{{\bf A}^l}(x))\Big).$$
 	An argument identical to that in Lemma \ref{5 convergence lemma} yields $B_l(t)\to B(t)$ in measure, where
 	$$B(t)\stackrel{{\rm def}}{=}\cI^{{\bf A}}_{e^{-tF_{22}}}\Big(\cI_{Lh}^{{\bf A}}(x)^{\ast}\cI_{Lh}^{{\bf A}}(x)\Big)-\Big(\cI^{{\bf A}}_{e^{-tF_{12}}}(\cI_{Lh}^{{\bf A}}(x))\Big)^{\ast}\Big(\cI^{{\bf A}}_{e^{-tF_{12}}}(\cI_{Lh}^{{\bf A}}(x))\Big).$$
 	Therefore, we have
 	$$- (c_{n}^n  \|x\|_{\cM})^2\leq B(t)\leq (c_{n}  \|x\|_{\cM})^2,$$
 	In other words,
 	$$\Big\|\cI_{Lh}^{{\bf A}}(x)\otimes e_{12}\Big\|_{\BMO(\cN \otimes \cM\otimes M_2(\mathbb{C}),{\rm id}_{\cN}\otimes\mathcal{J}^{{\bf A}})}\leq c_n \|x\|_{\cM},\quad x\in\cM.$$
 \end{proof}

 \section{Vector valued perturbations  and Lipschitz estimates}\label{Sect=Vector}

 In this section we consider   vector valued  commutator estimates.
 Consider a function
 \[
 f: \mathbb{R}^n \rightarrow \cN,
 \]
 which we assume to be  differentiable.
 Shortly, we shall require additional smoothness assumptions on $f$. The function $f$ plays the role of the Lipschitz function in Section \ref{Sect=ComBMO}.
 %
 For a differentiable function $g: \mathbb{R} \rightarrow \mathbb{C}$  and $s \leq t$ we have,
 \[
 g(t) - g(s) =   (t-s)  \int_0^1 g'( (1-\theta)s + \theta t)  d\theta.
 \]
 Therefore taking directional derivatives in the direction of the unit vector $\Vert t -s \Vert_2^{-1}(t-s)$ we find,
 \begin{equation}\label{Eqn=Directional}
 \begin{split}
 f(t) - f(s) = &
 \Vert t - s \Vert_2  \int_0^1 (\nabla \vert_{(1-\theta) s + \theta t } f)  \cdot \frac{t-s}{\Vert t - s \Vert_2} d\theta \\
 = &
 \sum_{k=1}^n \int_0^1 (\partial_k f)((1-\theta) s + \theta t ) (s_k - t_k) d\theta \\
 = & \sum_{k=1}^n (L \partial_k f)(s,t) (s_k - t_k).
 \end{split}
 \end{equation}

 \begin{lem} \label{Lem=Schwartz}
 	Let $c > 0$.
 	\begin{enumerate}[{\rm (i) }]
\item 	There exist Schwartz functions $\varphi_l: \mathbb{R}^n \rightarrow [0,1]$ that are compactly supported with $\varphi_l(\xi) = 1$ for $\Vert \xi \Vert_2  \leq l$ and  $\Vert \xi \Vert_2^{\vert \alpha \vert}  \vert \partial_\alpha \varphi_l  )(\xi) \vert \leq c$ for all  $1 \leq \vert \alpha \vert \leq n+2$.
\item  If $h \in C(\mathbb{R}^n, \cN)$ is a $C^{n+2}$-function that satisfies \eqref{Eqn=SmoothEnough}   then  $(1+c \cdot 2^{n+2} ) ^{-1} \varphi_l h$ satisfies Condition \ref{hormikh}.  	
\end{enumerate}
 \end{lem}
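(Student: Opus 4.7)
Proof plan for Lemma \ref{Lem=Schwartz}.

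For part (i), the plan is to take a standard radially symmetric Schwartz bump function. First fix any $\psi \in C_c^\infty(\mathbb{R}^n,[0,1])$ with $\psi(\xi)=1$ for $\Vert \xi \Vert_2 \leq 1$ and $\psi(\xi)=0$ for $\Vert \xi \Vert_2 \geq 2$, and set $\varphi_l(\xi) = \psi(\xi/l)$. Then $\varphi_l$ is Schwartz, takes values in $[0,1]$, equals $1$ on the ball of radius $l$, and is supported in the ball of radius $2l$. Chain rule gives $(\partial_\alpha \varphi_l)(\xi) = l^{-|\alpha|}(\partial_\alpha \psi)(\xi/l)$. For $|\alpha|\geq 1$, the function $\partial_\alpha \varphi_l$ is supported in the annulus $l \leq \Vert \xi\Vert_2 \leq 2l$ (since $\psi$ is constant outside $1 \leq \Vert \xi/l \Vert_2 \leq 2$), so
\[
\Vert \xi \Vert_2^{|\alpha|}\,|\partial_\alpha \varphi_l(\xi)| \leq (2l)^{|\alpha|}\cdot l^{-|\alpha|}\Vert \partial_\alpha \psi\Vert_\infty = 2^{|\alpha|}\Vert \partial_\alpha \psi\Vert_\infty.
\]
Taking the maximum over $1 \leq |\alpha| \leq n+2$ yields a finite dimensional constant; calling this constant $c$ gives (i).

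For part (ii), the plan is a direct Leibniz computation. The function $\varphi_l h$ is $C^{n+2}$ with compact support because both factors are, so only the $HM_n$-norm estimate needs checking. For any multi-index $\alpha$ with $0 \leq |\alpha| \leq n+2$, the Leibniz rule gives
\[
\Vert t\Vert_2^{|\alpha|}\Vert \partial_\alpha(\varphi_l h)(t)\Vert_{\cN} \leq \sum_{\beta \leq \alpha}\binom{\alpha}{\beta}\bigl(\Vert t \Vert_2^{|\beta|}|\partial_\beta \varphi_l(t)|\bigr)\bigl(\Vert t \Vert_2^{|\alpha|-|\beta|}\Vert \partial_{\alpha-\beta}h(t)\Vert_{\cN}\bigr).
\]
Each factor in the second parenthesis is bounded by $1$ by hypothesis \eqref{Eqn=SmoothEnough}. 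For $\beta = 0$ the first factor is $|\varphi_l(t)| \leq 1$; for $\beta \neq 0$ it is bounded by $c$ using (i). Hence the sum is at most $1 + c\sum_{0 \neq \beta \leq \alpha}\binom{\alpha}{\beta} \leq 1 + c(2^{|\alpha|}-1) \leq 1 + c\cdot 2^{n+2}$. Dividing through by $1 + c\cdot 2^{n+2}$ shows that $(1+c\cdot 2^{n+2})^{-1}\varphi_l h$ satisfies Condition \ref{hormikh}.

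There is no substantive obstacle here; the lemma is a quantitative bookkeeping statement. The only care needed is ensuring the scale invariance is properly exploited so that the constant $c$ in (i) is dimension dependent but $l$-independent, which is exactly what the homogeneous weights $\Vert \xi \Vert_2^{|\alpha|}$ on the left-hand side guarantee.
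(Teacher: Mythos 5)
Part (ii) is fine and essentially matches the paper: both run the Leibniz rule, bound the $\beta=0$ term by $\|\xi\|_2^{-|\alpha|}$ using \eqref{Eqn=SmoothEnough} and $|\varphi_l|\leq 1$, bound each $\beta\neq 0$ term by $c\|\xi\|_2^{-|\alpha|}$ using part (i) together with \eqref{Eqn=SmoothEnough}, and sum the multinomial coefficients to get $1+c\cdot 2^{n+2}$.

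Part (i), however, has a real gap. In the lemma the constant $c>0$ is given in advance and you must produce $\varphi_l$ achieving $\|\xi\|_2^{|\alpha|}|\partial_\alpha\varphi_l(\xi)|\leq c$; this quantification matters because in the proof of Theorem \ref{Thm=VecCom} the lemma is invoked with the specific value $c=2^{-n-2}$. Your construction $\varphi_l(\xi)=\psi(\xi/l)$, with $\psi$ a fixed bump equal to $1$ on $\|\xi\|_2\leq 1$ and $0$ outside $\|\xi\|_2\leq 2$, produces a \emph{fixed} constant $\max_{1\leq|\alpha|\leq n+2}2^{|\alpha|}\|\partial_\alpha\psi\|_\infty$, which you cannot make arbitrarily small by rescaling: the rescaling is exactly what the homogeneous weight absorbs. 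In fact it is bounded below, e.g. for $|\alpha|=1$ one must have $1=\int_l^{2l}|\partial_r\varphi_l|\,dr\leq\int_l^{2l}\frac{c}{r}\,dr=c\log 2$, forcing $c\geq 1/\log 2>1$; so a cutoff transitioning between radii $l$ and $2l$ can never satisfy the lemma with $c=2^{-n-2}$. The closing remark ``calling this constant $c$'' silently replaces the universally quantified $c$ of the lemma by an existentially quantified one. To actually achieve an arbitrary $c$, the transition annulus must widen as $c\to 0$ (roughly from radius $l$ out to $le^{1/c}$, e.g.\ by making $\varphi_l$ a gentle function of $\log\|\xi\|_2$), which is presumably what the paper has in mind when it simply asserts the existence of the one-dimensional base function and then scales and rotates. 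Your argument for (ii) survives unchanged once (i) is repaired, since it only uses the output of (i) as a black box.
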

\begin{proof}
	In case $n =1$ and $l=1$ let $\varphi^1_1: \mathbb{R} \rightarrow [0,1]$ be a function satisfying the conditions and then set $\varphi_l^1(\xi) = \varphi_l^1(l^{-1} \xi)$ which proves the lemma for $n=1$. For general $n$ set the rotational invariant function $\varphi_l^n(\xi) = \varphi^1_l(\Vert \xi \Vert_2)$ which are Schwartz and satisfy (i) and (ii).  We have for $\xi_1 \in \mathbb{R}$  that  $\vert (\partial_\alpha \varphi_l  )(\xi_1, 0, \ldots, 0)  \vert 	
	 = \delta_{\vert \alpha \vert = \alpha_1}   \vert  ( \partial_{\alpha_1}  \varphi_l^1)(\xi_1)  \vert \leq  c \Vert \xi \Vert_2^{\alpha_1}$. By rotation of variables this gives $\vert (\partial_\alpha \varphi_l  )(\xi)  \vert \leq c \Vert \xi \Vert_2^{- \vert \alpha \vert}$.	 		 By the Leibniz rule,
	 \[
	 \partial_\alpha ( \varphi_l h) = \sum_{\beta + \gamma = \alpha} c_{\beta, \gamma} (\partial_\beta \varphi_l) \: (\partial_\gamma f),
	 \]
	 for certain combinatorical coefficients $c_{\beta, \gamma} \in \mathbb{N}$ which satisfy  $ \sum_{\beta + \gamma = \alpha} c_{\beta, \gamma} =  2^{\vert \alpha \vert}$. So that,
	 \[
	 \begin{split}
	 & 	  \vert (\partial_\alpha  \varphi_l h)(\xi) \vert \leq
	 \sum_{\beta + \gamma = \alpha} c_{\beta, \gamma}  \vert (\partial_\beta   \varphi_l)(\xi) \vert  \:\vert (\partial_\gamma h)(\xi) \vert \\
	 \leq &
	 \Vert \xi \Vert_2^{-\vert \alpha \vert} + \sum_{\beta + \gamma = \alpha, \beta \not = 0} c \cdot c_{\beta, \gamma}  \Vert \xi \Vert_2^{- \vert \beta \vert} \Vert \xi \Vert_2^{-\vert \gamma\vert}
	  \leq (1+c \cdot 2^{\vert \alpha \vert} ) \Vert \xi \Vert_2^{-\vert \alpha \vert}.
	 \end{split}
	 \]
	 So for all $\vert \alpha \vert \leq n+2$ we obtain that  $\vert (\partial_\alpha  \varphi_l f)(\xi) \vert \leq (1+c \cdot 2^{n+2} )  \Vert \xi \Vert_2^{-\vert \alpha \vert}$, i.e. Condition \ref{hormikh}.	
\end{proof}

 For a function $f \in C(\mathbb{R}^n, \cN)$ and an $n$-tuple $\boldA$ of commuting self-adjoint operators in $\cM$ we define,
 \[
 f(\boldA) = \int_{\mathbb{R}^n} f(\xi) \otimes dE^{\boldA}(\xi) \in \cN \otimes \cM,
 \]
 where $E^{\boldA}$ was the spectral measure of the $n$-tuple $\boldA$. It is the unique element in $\cN \otimes \cM$ such that for every $\omega \in \cN_\ast$ we have
 \[
 (\omega \otimes \id)(f(\boldA) ) = \int_{\mathbb{R}^n} \omega \circ f(\xi)  dE^{\boldA}(\xi) \in \cM.
 \]

 \begin{thm}\label{Thm=VecCom}
 	Let $f: \mathbb{R}^n \rightarrow \cN$ be a $C^{n+3}$-function such that each of the functions $h_k = \partial_k f, k= 1, \ldots, n$ satisfy \eqref{Eqn=SmoothEnough}.
 	There exists a constant $c_{n}$ only depending on the dimension $n$  such that for every $x \in L_2(\cM) \cap L_p(\cM)$ and every $n$-tuple $\boldA = (A_1, \ldots, A_n)$ of commuting self-adjoint operators in $\cM$ we have $[f(\boldA), 1 \otimes x] \in L_p( \cN \otimes \cM  )$. Moreover,
 	\[
 	\Vert  [f(\boldA), 1 \otimes x]  \Vert_{p} \leq
 	c_{n} \frac{p^2}{p-1} \max_k( \Vert h_k \Vert_{HM_n}  ) \sum_{k=1}^n \Vert [A_k, x] \Vert_{p}.
 	\]	
 \end{thm}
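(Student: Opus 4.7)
The plan is to imitate the scheme of Theorem \ref{Thm=DOIbound}: interpolate the vector-valued BMO estimate of Theorem \ref{vector doi thm} with an $L_2$ bound to obtain $L_p$ bounds on each $\cI^{\boldA}_{Lh_k}$, then dualize for $1<p<2$. The first step is purely algebraic: by the integral Leibniz identity \eqref{Eqn=Directional}, $f(s)-f(t)=\sum_{k=1}^n(Lh_k)(s,t)(s_k-t_k)$, and using $[f(\boldA),1\otimes x]=\cI^{\boldA}_{\psi_f}(x)$ together with $\cI^{\boldA}_{s_k-t_k}(x)=[A_k,x]$ and multiplicativity of double operator integrals in the symbol, one arrives at
\[
[f(\boldA),1\otimes x]=\sum_{k=1}^{n}\cI^{\boldA}_{Lh_k}([A_k,x]).
\]
By the triangle inequality it suffices to establish $\|\cI^{\boldA}_{Lh_k}\colon L_p(\cM)\to L_p(\cN\otimes\cM)\|\leq c_n\,p^2(p-1)^{-1}\|h_k\|_{HM_n}$ for each $k$.

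To prove this bound I would first strip away the value at $0$, since Theorem \ref{vector doi thm}(ii) requires the symbol to vanish at the origin: replace $h_k$ by $\tilde h_k=h_k-h_k(0)$, which still obeys Hörmander-Mikhlin up to a factor of $2$ (subtracting a constant does not affect higher derivatives, while the $|\alpha|=0$ contribution is bounded by $2\|h_k\|_{HM_n}$). The leftover term $h_k(0)\otimes[A_k,x]$ has $L_p$-norm trivially bounded by $\|h_k\|_{HM_n}\|[A_k,x]\|_p$. Next I truncate via Lemma \ref{Lem=Schwartz}: $C^{-1}\varphi_l\tilde h_k$ satisfies Condition \ref{hormikh} for some universal $C$ and recovers $\tilde h_k$ in the limit. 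Theorem \ref{vector doi thm}(ii) then supplies the BMO bound
\[
\|\cI^{\boldA}_{L(\varphi_l \tilde h_k)}(y)\otimes e_{12}\|_{\BMO(\cN\otimes\cM\otimes M_2,\,\id\otimes\cJ^{\boldA})}\leq c_n\|h_k\|_{HM_n}\|y\|_{\cM},
\]
while the matching $L_2$ bound follows from the spectral expansion of Section \ref{Sect=DOIvector}, the inequality $\|a\|_{L_2(\cN)}\leq\|a\|_{\cN}$ for the normalized finite trace, and the observation that $\|(L h_k)(s,t)\|_{\cN}\leq\|h_k\|_\infty\leq\|h_k\|_{HM_n}$ since $Lh_k$ is a Bochner average of $h_k$-values.

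With the two endpoint bounds in place, I would invoke Theorem \ref{Thm=JungeMeiInterpolation} for the Markov semi-group $\id_\cN\otimes\cJ^{\boldA}$ (Markov by Theorem \ref{vector doi thm}(i)), yielding the $L_p$ estimate $\lesssim c_n\,p\,\|h_k\|_{HM_n}$ for $2\leq p<\infty$. The extension to $1<p\leq 2$ follows by duality exactly as in the proof of Theorem \ref{Thm=DOIbound}, because the $L_p$-adjoint of $\cI^{\boldA}_{Lh_k}$ is a vector-valued DOI with symbol $(Lh_k)^{*}$ satisfying the same Hörmander-Mikhlin estimate. Passing $l\to\infty$ by means of a Lemma \ref{Lem=ConvDOI}-style convergence in the symbol, and summing over $k$, yields the full bound $c_n\tfrac{p^2}{p-1}\max_k\|h_k\|_{HM_n}\sum_k\|[A_k,x]\|_p$.

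The principal obstacle is constructing a standard Markov dilation for $\cJ^{\boldA}$ on $\cM\otimes M_2$, needed to apply Theorem \ref{Thm=JungeMeiInterpolation}; this semi-group is not a DOI on a single self-adjoint operator, so Theorem \ref{Thm=DOIMarkovDilation} does not apply off the shelf. The cleanest route is via the transference Proposition \ref{transference vector}: $\pi_{\boldA}$ intertwines $\cJ^{\boldA}$ with the classical Heat semi-group $\cS^{2n}\otimes\id$ on $L_\infty(\mathbb{R}^{2n})\otimes\cM\otimes M_2$, whose Brownian dilation can then be pulled back along $\pi_{\boldA}$ (the general-$\boldA$ case being reduced to finite spectrum via $A_l=l^{-1}\lfloor lA\rfloor$ using Proposition \ref{markov approximation prop} and Theorem \ref{discretization theorem}). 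Alternatively one may adapt Proposition \ref{Thm=RicardDilation}'s Fermionic construction directly to the kernel $e^{-tF_{ij}(\lambda,\mu)}$ on the enlarged index set $\sigma(\boldA)\times\{1,2\}$, which requires checking the positive semi-definiteness of this kernel---plausible via a Schur-product factorization $e^{-tF_{ij}}=e^{-t(\lambda^2+\mu^2)}\,e^{2t\delta_{ij}\lambda\mu}$, but this is the one spot where additional care is needed.
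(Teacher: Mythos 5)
Your proposal follows essentially the same route as the paper: normalize $h_k$ to vanish at $0$ (the paper does this by subtracting $\sum_k(\partial_k f)(0)g_k$ from $f$), truncate with the Schwartz cutoffs of Lemma~\ref{Lem=Schwartz} so that $L(\varphi_l h_k)$ agrees with $Lh_k$ on the joint spectrum of $\boldA$, apply Theorem~\ref{vector doi thm}(ii) for the BMO endpoint and the trivial $\|Lh_k\|_\infty$-bound for the $L_2$ endpoint, interpolate via Theorem~\ref{Thm=JungeMeiInterpolation}, and dualize for $1<p\leq 2$, finally summing the $n$ terms of the Leibniz decomposition $[f(\boldA),1\otimes x]=\sum_k \cI^{\boldA}_{Lh_k}([A_k,x])$. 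The one place you go beyond the paper's text is in flagging that the interpolation step needs a standard Markov dilation for $\mathcal{J}^{\boldA}$ rather than for a single-operator DOI semigroup; the paper merely says ``as in the proof of Theorem~\ref{Thm=DOIbound}.'' You are right that this requires an argument, and your Schur-product observation does close it: $e^{-tF_{ij}(\lambda,\mu)}=e^{-t|\lambda|^2}e^{-t|\mu|^2}\cdot e^{2t\langle\lambda,\mu\rangle\delta_{ij}}$, the first factor is rank one, and $e^{2t\langle\lambda,\mu\rangle\delta_{ij}}=\sum_{m\geq 0}\frac{(2t)^m}{m!}\langle\lambda,\mu\rangle^m\delta_{ij}$ is a sum of Schur products of positive-definite kernels on $\mathbb{R}^n$ and on $\{1,2\}$, so the whole kernel is positive semi-definite and Proposition~\ref{Thm=RicardDilation} (extended to tuples, with $\sigma(\boldA)\times\{1,2\}$ as the index set) together with Proposition~\ref{markov approximation prop} and Theorem~\ref{discretization theorem} supplies the dilation.
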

 \begin{proof}
  As the theorem is true for the coordinate functions $g_k: \mathbb{R}^n \rightarrow \mathbb{R}: \xi \mapsto \xi_k, 1 \leq k \leq n$, we may replace $f$ by $f - \sum_{k=1}^n (\partial_k f)(0) g_k$  and assume without loss of generality that $(\partial_k f)(0) = 0, 1 \leq k \leq n$.
 	
 	 By Lemma \ref{Lem=Schwartz}
 	let $\varphi_l: \mathbb{R}^n \rightarrow [0,1], l \in \mathbb{N}_{\geq 0}$ be as in Lemma \ref{Lem=Schwartz} with $c = 2^{-n-2}$.  By Lemma \ref{Lem=Schwartz}  we  have that $2^{-1} \varphi_l h_k$ satisfies   Condition \ref{hormikh}. Let $l_0 \in \mathbb{N}$ be larger than $\max_k \Vert A_k \Vert$ and set $\varphi = \varphi_{l_0}$.

 	Consider the function $\psi_k(s, t) = s_k -t_k$ and $\psi_f(s, t) = f(s) - f(t)$.  	
 	 We have by \eqref{Eqn=Directional} that
 	 \[
 	 \psi_f(\xi)  =  \sum_{k = 1}^n L(h_{k})  \psi_k(\xi) =  \sum_{k = 1}^n L(\varphi_l h_{k})   \psi_k(\xi),
 	 \]
 	  for all $\xi \in \mathbb{R}^n$ with $\xi_i \leq \Vert A_k \Vert$.

 	 	As in the proof of Theorem \ref{Thm=DOIbound}, by Theorem \ref{vector doi thm}   and Theorem   \ref{Thm=JungeMeiInterpolation}  we find through a discretization of $\boldA$ and complex interpolation that for $2 \leq p \leq \infty$,
 	\[
 \Vert	\cI^\boldA_{L(\varphi h_k)}: L_p(\cM) \rightarrow L_p(\cN \otimes \cM) \Vert \leq   c_{n} p.
 	\]
 	So that,
 	\[
 	[f(\boldA), x] = \cI^\boldA_{\psi_f}(x) = 	
 	\sum_{k=1}^n \cI^\boldA_{L(\varphi h_k)} \circ \cI^\boldA_{\psi_k}(x) =
 	\sum_{k=1}^n \cI^\boldA_{L(\varphi h_k)} ( [A_k, x]).
 	\]
 	Then,
 	\[
 	\Vert  [f(\boldA), x] \Vert_p \leq \max_{k}( \Vert \cI_{L(\varphi h_k)}^{\boldA}: L_p \rightarrow L_p \Vert)  \sum_{k=1}^n \Vert [A_k, x]  \Vert_p \leq c_{n} \: p \sum_{k=1}^n \Vert [A_k, x]  \Vert_p.
 	\]
 	This concludes the proof for $2 \leq p < \infty$. For $1 < p \leq 2$ the proof follows by duality just as in Theorem \ref{Thm=DOIbound}. 	
 	
 \end{proof}

 \begin{thm}\label{Thm=VecLip}
 	Let $f: \mathbb{R}^n \rightarrow \cN$ be a $C^{n+3}$-function such that each of the functions $h_k = \partial_k f, k= 1, \ldots, n$ satisfy \eqref{Eqn=SmoothEnough}.
 	There exists a constant $c_{n}$    such that for every $n$-tuples of self-adjoint operators $\boldB = (A_1, \ldots, A_n)$ and $\boldC = (C_1, \ldots, C_n)$ of commuting self-adjoint operators in $\cM$ we have
 	\[
 	\Vert  f(\boldB) - f(\boldC) \Vert_{p} \leq
 	c_{n} \frac{p^2}{p-1} \sum_{k=1}^n \Vert B_k - C_k \Vert_{p}.
 	\]	
 \end{thm}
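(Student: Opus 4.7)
The strategy is to reduce Theorem~\ref{Thm=VecLip} to Theorem~\ref{Thm=VecCom} via the $2\times 2$ matrix trick already employed in Corollary~\ref{Cor=LipEst}. I will work in the finite von Neumann algebra $M_2(\cM)$ and form the $n$-tuple $\boldA = (A_1, \ldots, A_n)$ with $A_k = \begin{pmatrix} B_k & 0 \\ 0 & C_k \end{pmatrix}$, together with the self-adjoint unitary $x = \begin{pmatrix} 0 & 1 \\ 1 & 0 \end{pmatrix} \in M_2(\cM)$. The $A_k$ are self-adjoint and mutually commute, since $B_iB_j = B_jB_i$ and $C_iC_j = C_jC_i$ by hypothesis, so the assumptions of Theorem~\ref{Thm=VecCom} are satisfied for $\boldA$ in $M_2(\cM)$.

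Next I would identify $f(\boldA)$ explicitly in $\cN \otimes M_2(\cM) = M_2(\cN \otimes \cM)$. Because the joint spectral measure of $\boldA$ satisfies $E^{\boldA}(X) = E^{\boldB}(X) \oplus E^{\boldC}(X)$ for every Borel set $X \subseteq \mathbb{R}^n$, the defining integral gives $f(\boldA) = f(\boldB) \oplus f(\boldC)$ in block form. A direct matrix computation then yields
\[
[f(\boldA), 1_{\cN}\otimes x] = \begin{pmatrix} 0 & f(\boldB)-f(\boldC) \\ f(\boldC)-f(\boldB) & 0 \end{pmatrix}, \qquad [A_k, x] = \begin{pmatrix} 0 & B_k-C_k \\ C_k-B_k & 0 \end{pmatrix}.
\]
Taking $p$-norms with respect to the tensor product trace on $M_2(\cM)$, both sides pick up a common factor $2^{1/p}$, so $\Vert [f(\boldA), 1_{\cN}\otimes x]\Vert_p = 2^{1/p}\Vert f(\boldB)-f(\boldC)\Vert_p$ and $\Vert [A_k, x]\Vert_p = 2^{1/p}\Vert B_k - C_k\Vert_p$.

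Applying Theorem~\ref{Thm=VecCom} to the finite von Neumann algebra $M_2(\cM)$ with the $n$-tuple $\boldA$ and the element $x$ (which automatically lies in $L_2(M_2(\cM)) \cap L_p(M_2(\cM))$ as $M_2(\cM)$ is finite and $x$ is bounded) yields
\[
\Vert [f(\boldA), 1_{\cN}\otimes x]\Vert_p \leq c_n \frac{p^2}{p-1} \sum_{k=1}^n \Vert [A_k, x]\Vert_p.
\]
Cancelling the common factor $2^{1/p}$ on both sides delivers the desired inequality.

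I do not anticipate any genuine obstacle here: the argument is a direct transcription of Corollary~\ref{Cor=LipEst} with $n$-tuples replacing single operators. The only point that requires attention is the block-diagonal form of $f(\boldA)$, which is immediate from the corresponding block decomposition of the joint spectral measure of $\boldA$; in particular, no additional commutativity between the $B_k$'s and the $C_j$'s is needed, since these live on orthogonal diagonal summands of $M_2(\cM)$.
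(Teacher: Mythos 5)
Your proof is correct and follows exactly the route the paper takes: the authors also invoke Theorem~\ref{Thm=VecCom} applied to the block-diagonal $n$-tuple $A_k = \begin{pmatrix} B_k & 0 \\ 0 & C_k \end{pmatrix}$ and $x = \begin{pmatrix} 0 & 1 \\ 1 & 0 \end{pmatrix}$, referring to Corollary~\ref{Cor=LipEst} for the bookkeeping. Your write-up simply fills in the details (block form of $f(\boldA)$ via the block decomposition of the joint spectral measure, the common $2^{1/p}$ factor) that the paper leaves implicit.
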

 \begin{proof}
 	Apply Theorem \ref{Thm=VecCom} to the $n$-tuple  $\left( \begin{array}{cc} B_k & 0 \\ 0 & C_k  \end{array} \right)$ with $k =1, \ldots, n$ and $x =  \left( \begin{array}{cc} 0 & 1 \\ 1 & 0 \end{array} \right)$. See Corollary \ref{Cor=LipEst} for details.
 \end{proof}	

 We apply our results to the particular case that $\cN$ is is an algebra of freely independent semi-circular elements.

 \begin{cor}\label{Cor=SemiCircle}
 	Let $s_i, i \in \mathbb{N}$ be freely independent semi-circular random variables and let $f_i: \mathbb{R} \rightarrow \mathbb{C}$ be $C^{4}$-functions.   Put  $F_l = \sum_{i=1}^l s_i \otimes f_i$ and assume that $F_l$ satisfies \eqref{Eqn=SmoothEnough}.
 	We have for every $l$ that,
 	\[
 	\Vert \sum_{i =1}^l s_i \otimes f_i(B)  - \sum_{i =1}^l s_i \otimes f_i(C) \Vert_p \leq c_{n}   \frac{ p^2}{p-1}  \Vert B - C \Vert_p.
 	\]
 \end{cor}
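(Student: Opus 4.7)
\textbf{Proof plan for Corollary \ref{Cor=SemiCircle}.} The strategy is to view $F_l = \sum_{i=1}^l s_i \otimes f_i$ as a single $\cN$-valued function of one real variable and to apply Theorem \ref{Thm=VecLip} directly with $n=1$. Let $\cN$ be the finite von Neumann algebra generated by the free semi-circular family $\{s_i\}_{i\geq 1}$, equipped with its canonical tracial state. Since each $f_i$ is $C^4$ and the sum is finite, the map $F_l : \mathbb{R} \to \cN$, $t\mapsto \sum_{i=1}^l s_i f_i(t)$, is a $C^4 = C^{n+3}$ function. Thus $F_l$ meets the regularity hypothesis of Theorem \ref{Thm=VecLip}; the hypothesis that $F_l$ satisfies \eqref{Eqn=SmoothEnough} (read as: its derivative $h_1 = F_l' = \sum_i s_i f_i'$ satisfies \eqref{Eqn=SmoothEnough}) is exactly the Hörmander--Mikhlin-type condition required by that theorem.

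The only bookkeeping step is to identify the action of $F_l$ via the vector-valued spectral calculus defined in Section \ref{Sect=DOIvector} with the naive expression on the right-hand side of the corollary. For a self-adjoint $B\in\cM$ with spectral measure $E^B$, the definition
\[
F_l(B) \;=\; \int_{\mathbb{R}} F_l(\xi)\otimes dE^B(\xi) \;\in\; \cN\otimes\cM
\]
and the linearity of the integral in the $\cN$-component give
\[
F_l(B) \;=\; \sum_{i=1}^l s_i \otimes \int_{\mathbb{R}} f_i(\xi)\, dE^B(\xi) \;=\; \sum_{i=1}^l s_i\otimes f_i(B),
\]
and similarly for $C$. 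Hence the left-hand side of the corollary equals $\|F_l(B) - F_l(C)\|_p$.

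Having made these identifications, the conclusion is immediate from Theorem \ref{Thm=VecLip} applied with $n = 1$, $f = F_l$, $\boldB = B$, $\boldC = C$: there is an absolute constant $c_1$ (only the dimension $n=1$ enters) such that
\[
\|F_l(B) - F_l(C)\|_p \;\leq\; c_1 \, \frac{p^2}{p-1}\, \|B-C\|_p.
\]
Since $n=1$ is fixed, $c_1$ does not depend on $l$, which yields the stated inequality. There is no genuine obstacle here; the content of the corollary is simply that Theorem \ref{Thm=VecLip} applies uniformly in $l$, which is a matter of checking that the Hörmander--Mikhlin norm of $F_l'$ is the quantity hidden in the assumption \eqref{Eqn=SmoothEnough}. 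The only subtlety one should be mindful of is that the constant in Theorem \ref{Thm=VecLip} absorbs $\max_k\|h_k\|_{HM_n}$, so the normalization of \eqref{Eqn=SmoothEnough} should be used as stated.
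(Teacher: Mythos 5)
Your proposal is correct and takes essentially the same route as the paper: the paper's proof is the one-line statement ``apply Theorem~\ref{Thm=VecLip} with $n=1$ and $f = F_l$.'' You helpfully spell out two details the paper leaves implicit --- the identification $F_l(B) = \sum_i s_i \otimes f_i(B)$ via the vector-valued functional calculus of Section~\ref{Sect=DOIvector}, and the reading of the corollary's hypothesis as a condition on $h_1 = F_l'$ (rather than on $F_l$ itself), which is what Theorem~\ref{Thm=VecLip} actually requires.
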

 \begin{proof}
 	This follows from Theorem \ref{Thm=VecLip} with $n=1$ and  $\boldB = B$ and $\boldA = a$ a single operator and further $f = F_l$.
 \end{proof}

\end{document}